\newcommand{\BR}{\mathbb{{R}}}
\newcommand{\T}{\mathbb{{T}}}
\newcommand{\N}{\mathbb{{N}}}
\def\le {\leqslant}
\def\ge {\geqslant}
\def\esssup{\text{ess sup}}
\providecommand{\U}[1]{\protect\rule{.1in}{.1in}}
\theoremstyle{plain}
\newtheorem{corollary}{Corollary}[section]
\newtheorem*{definition}{Definition}
\newtheorem{lemma}{Lemma}[section]
\newtheorem{theorem}{Theorem}[section]
\numberwithin{equation}{section}
\theoremstyle{definition}
\newtheorem{remark}{Remark}[section]
\newtheorem*{examples}{Examples}
\newtheorem*{example}{Example}
\begin{document}

\title[Bernstein inequalities with nondoubling weights]
{Bernstein inequalities with nondoubling weights
}
\author{A. Bondarenko}
\address{A. Bondarenko\\
Department of Mathematical Analysis, National Taras Shevchenko
University, str.\ Volodymyrska, 64, Kyiv, 01033, Ukraine and
Department of Mathematical Sciences, Norwegian University of Science and Technology, NO-7491 Trondheim, Norway}
\email{andriybond@gmail.com}

\author{S. Tikhonov}
\address{S. Tikhonov
\\
ICREA
 and Centre de Recerca Matem\`{a}tica
\\ Apartat 50 08193 Bellaterra, Barcelona
}
\email{stikhonov@crm.cat}

\thanks{This paper is in final form and no version of it will be submitted for
publication elsewhere.}
\thanks{
This research was partially supported by the MTM2011-27637/MTM,
2009 SGR 1303, RFFI 12-01-00169, and NSH 979.2012.1.
This work was carried out during the tenure of an ERCIM ``Alain Bensoussan'' Fellowship Programme. The research leading to these results has received funding from the European Union Seventh Framework Programme (FP7/2007-2013) under grant agreement N 246016.
}

\date{August 24, 2012}
\subjclass[2000]{Primary 26D05; Secondary 42A05, 41A17}
\keywords{ Bernstein inequality, nondoubling weights, Remez inequality, Nikolskii inequality}

\begin{abstract}
We answer Totik's question on weighted Bernstein's inequalities % with non doubling weights
 showing that
\begin{equation*}
\|T_n'\|_{L_p(\omega)} \le C(p,\omega)\, {n}\,\|T_n\|_{L_p(\omega)},\qquad 0<p\le \infty,
\end{equation*}
holds for all trigonometric polynomials $T_n$ and certain  nondoubling weights $\omega$. Moreover, we find necessary conditions on $\omega$ for
Bernstein's inequality to hold. We also prove weighted
Bernstein-Markov, Remez, and Nikolskii inequalities for trigonometric and algebraic polynomials.
\end{abstract} \maketitle

\vspace{5mm}

{{
\tableofcontents
}}
\vspace{4mm}
\section{Introduction}
The famous  Bernstein inequality for trigonometric polynomials $T_n$ of degree at most $n$
\begin{equation} \label{ber}
\|T_n'\|_{L_p(\T)} \le C {n}\|T_n\|_{L_p(\T)}
\end{equation}
plays an important role in the modern analysis. Here,
$\|\cdot\|_{L_p(\T)}$ is the  $L_p$-(quasi)norm, i.e.,
$$\|f\|_{L_p(\T)}=\left(\int_{\mathbb{T}}|f(t)|^p \,dt\right)^{1/p}, \qquad 0< p < \infty,$$
with the usual modification for $p=\infty$. Bernstein proved
(\ref{ber}) for $p=\infty$; the case $p<\infty$ was done by Zygmund
\cite{zy}. The best constant  $C$ is equal to $1$ for any $p\in
(0,\infty]$, see \cite{riesz, zy, ar}.

For algebraic polynomials $P_n$ of degree at most $n$, the Bernstein inequality is given by
\begin{equation*} \label{ber1}
|P_n'(x)| \le \frac{n}{\sqrt{1-x^2}}\|P_n\|_{C[-1,1]},\qquad\qquad
x\in (-1,1),
\end{equation*}
where $\|\cdot\|_{C[-1,1]}$ denotes the supremum norm on the $[-1,1]$.
Its $L_p$-version is written as follows:
\begin{equation} \label{ber-ber}
\|{\sqrt{1-x^2}} P_n'(x)\|_{L_p{[-1,1]}} \le
C(p){n}\|P_n\|_{L_p{[-1,1]}}, \qquad 0< p\le \infty.
\end{equation}
Another important inequality for the derivative of algebraic
polynomials is the following Markov inequality:
\begin{equation} \label{ber-mar}
\|P_n'\|_{L_p[-1,1]} \le C(p){n^2}\|P_n\|_{L_p[-1,1]},\qquad 0 <p\le
\infty.
\end{equation}

Both Bernstein and Bernstein--Markov inequalities for trigonometric and algebraic polynomials respectively
 were extended to the case of smaller intervals  ( Privalov, Jackson, and Bary; see, e.g., \cite{bar})
  %by different authors, e.g.,
 and several intervals (see the recent paper by Totik \cite{to}).

In this paper we study weighted analogues of Bernstein's inequality
\begin{equation} \label{berw}
\|T_n'\|_{L_p(\omega)} \le C(p,\omega) {n}\|T_n\|_{L_p(\omega)},
\end{equation}
where %$\|f\|_{p,\omega}=\|f\,\omega^{1/p}\|_{p}$ and
  $\omega$ is a weight function, i.e., a nonnegative integrable function on $\T$.
Here and in what follows,
$\|T_n\|_{L_p(\omega)}=
\left(\int_{\mathbb{T}}|T_n|^p\omega\right)^{1/p}$ if $p<\infty$ and
$\|T_n\|_{L_\infty(\omega)}=\esssup_{t\in\mathbb{T}}|T_n(t)\omega(t)|$.

First, we note that Muckenhoupt's $A_p$ condition on weights ensures that  (\ref{berw}) holds for $1<p<\infty$. This follows from the fact that the Marcinkiewicz multiplier theorem
and Littlewood-Paley decomposition  hold in $L_p(\omega)$ with
$\omega\in A_p$.
In \cite{mas1}, Mastroianni and Totik proved a much stronger result that for any weight $\omega$ satisfying the
{\it doubling} condition and for $1\le p <\infty$ inequality (\ref{berw}) holds. Later, a similar result was shown for $0< p< 1$ (see
\cite{er}).
% The Bernstein inequality for the spherical harmonics was proved in \cite{f3}.

We recall that a periodic weight function $\omega$ satisfies the
doubling condition if
\begin{equation} \label{doubl}
 W(2I) \le L W(I)
\end{equation}
for all intervals $I$, where $L$ is a constant independent of $I$, $2I$ is the interval %with length $2|I|$
 %($|I|$ denotes the length of the interval $I$)
  twice the length of $I$ and with the midpoint coinciding with that of $I$, and
$$
W(I)=\int_I \omega(t)\, dt.
$$
Let us also recall that a weight $\omega$ satisfies the $A_\infty$ condition if for every $\alpha>0$ there is $\,\beta>0$ such that
\begin{equation*}
 W(E) \ge  \beta W(I)
\end{equation*}
for any interval $I$ and any measurable set $E\subset I$ with
$|E|\ge  \alpha |I|$. It is known \cite[Ch. V]{stein} that any
$A_\infty$ weight satisfies the doubling condition.
Here and in what follows, $|E|$ denotes the Lebesgue measure of the set  $E$.

For the supremum norm, in addition to the natural assumption that $\omega$ is bounded, one needs the $A^*$ condition, i.e.,
there exists a constant $L$ such that for all
intervals $I\subset [-\pi, \pi]$ and $t \in I$ we have
$$\omega(t)\le \frac L{|I|} \, W(I). %\int_I\omega
$$
This condition is stronger than the $A_\infty$ condition and it is sufficient for (\ref{berw}) to hold when $p=\infty$.

In \cite{totik-q}, Totik posed the following  question: under which
condition on a general (not necessary doubling) weight $\omega$ does the
 Bernstein inequality (\ref{berw}) hold for any trigonometric
polynomial $T_n$ of degree at most $n$? In this paper we aim to answer this
question. We deal with the weight functions from the  class $\Omega$.

\begin{definition}
Let
$$
\omega(t)=\exp{\left(-F(g(t))\right)},\quad t\in \mathbb{T},
$$
where $g:\mathbb{T}\to [-A,A]$, $A>0$, is an %  $2\pi$-periodic function,
 analytic function, e.g.,
\begin{equation}
\label{g}|g^{(n)}(t)|\le D^nn!,\quad t\in\mathbb{T},\quad
n=1,2,\ldots,
\end{equation}
 such that each zero of $g$ is of multiplicity one.
 %having all zeros of multiplicity one.
 Let also
$F:[-A,A]\setminus\left\{0\right\}\to (0,\infty)$ be an even
$C^{\infty}$ function on $(0,A]$ such that
$$
 F(x)\to\infty\;\;\;\mbox{as}\quad x\to 0+;\qquad\qquad\qquad\qquad\qquad\qquad\qquad%\qquad%\qquad%\qquad
 \leqno{(F1)}
$$
$$
F\quad\mbox{ is decreasing on}\quad (0,A];\qquad\qquad\qquad\qquad\qquad\qquad\;%\qquad%\qquad%\quad
\leqno{(F2)}
$$
$$
|F^{(n)}(x)|\le B^nn^n\frac{F(x)}{x^n},\quad
x\in(0,A],\quad n=1,2,\ldots;\qquad%\qquad%\qquad
\leqno{(F3)}
$$
$$
\mbox{there exist} \quad A_1, A_2>0\quad\mbox{ such that}\;\;\qquad\qquad\qquad\qquad%\qquad%\qquad
\leqno{(F4)}
$$
%$(F4)$ There exists $A_1$, $A_2>0$ such that
$$
A_2\le\frac{|F'(x)|x}{F(x)}\le A_1,\quad x\in(0,A].\qquad\qquad\qquad\qquad\qquad
$$
%If all mentioned conditions on $g$ and $F$ are fulfilled,
Then we write that $\omega\in \Omega$.
\end{definition}
%\begin{remark}\label{remerk-near0}
It is worth mentioning that all our results hold for weights $\omega(t)=\exp{\left(-F(g(t))\right)}$, where $F$ satisfies $(F_1)-(F_4)$ only for $x\in (0,\varepsilon)$ for some $0<\varepsilon<A$ and $$|F^{(n)}(x)|\le B^n n^n {F(x)},\quad
x\in [\varepsilon,A],\quad n=1,2,\ldots.
$$
%\end{remark}

The typical example of the function $g$ is $\sin t$ or $\cos t$.  Note that $\omega\in \Omega$ is nondoubling if and only if $g$ has at least one zero on $\T$. In what follows this  will be assumed to be the case.
Below we give some examples of a function $F$ satisfying properties
$(F1)-(F4)$. Consider a positive even function $F$ defined on $(0, A]$.
%Since $F$ is even, we define it for $x>0$.\\

\begin{examples}
{\textnormal{
\\
1. Let $$F(x)=x^{-\alpha}, \; %F(x)=
 {x^{-\alpha}} |\log x|^{\xi_1},\;  %|\log x|^{\xi_1} |\log\log x|^{\xi_2},
{x^{-\alpha}}|\log x|^{\xi_1} \cdots |\log_k x|^{\xi_k},\;
{x^{-\alpha}}\exp {|\log x|^\xi},
$$%\\
where $\alpha>0$, $\xi_j\in \BR,$ $\xi\in(0,1)$, and $\log_j x=\log_{j-1}|\log x|$.
Note that any such a function $F$ is of regular variation of index  $-\alpha$, i.e.,
%\\We can consider a more general class of weights.
%Recall the following definition (see, e.g., \cite[Ch. 1]{bir}).
 %Let $F$ be decreasing on $(0, A)$ and $\lim_{x\to 0} F(x)= \infty$.
% We say that a measurable positive function $F$ is of regular variation of index  $-\alpha$, $\alpha>0$, if,
 for all $r > 0$,
\begin{equation}
\label{F-r-x}
\lim_{x\to 0+}\frac{F(rx)}{F(x)}=r^{-\alpha},
\end{equation}
or, equivalently,
$$
F(x)=\frac{1}{x^\alpha}\eta(x),
$$
where $\eta$ is a slowly varying function, i.e., $\lim\limits_{x\to 0+}\frac{\eta(rx)}{\eta(x)}=1$.
%\cite[Th. 1.8.2]{bir},
 \\
2. Note that there are functions satisfying
$(F1)$--$(F4)$ which are not regularly varying. For example, the function
$$
F(x)=\exp\Big\{-\log x\big(2+\sin \big(
\log_3 x
%\log\log(-\log x)
\big)\big)\Big\}
$$
is such that
$$
\limsup_{x\to 0+}F(x)x^3=1
$$
and
$$
\liminf_{x\to 0+}F(x)x=1,
$$
i.e., (\ref{F-r-x}) does not hold. To show that $F$ satisfies $(F3)$ one can use Fa\`a di Bruno's formula.
}}
\end{examples}
The main results of the paper are the following Theorems \ref{tri**}--\ref{b}.
\begin{theorem}
\label{tri**} For  $0<p\le \infty$ and
$\omega=\omega_1\ldots\omega_s$ such that $\omega_i\in\Omega$,
$i=1,\ldots,s$, the Bernstein inequality
\begin{equation}
\label{*} \|T'_n\|_{L_p(\omega u)} \le \, C\,n\, \|T_n\|_{L_p(\omega
u)}
\end{equation}
holds for any trigonometric polynomial of degree at most $n$ with $C=C(\omega, u, p)$, whenever $u$ is doubling if
$p<\infty$, and $u$ satisfies the $A^*$ condition if $p=\infty$.
\end{theorem}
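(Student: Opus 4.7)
The strategy is to split $\T$ into a ``good'' region, where the composite weight $\omega u$ is actually doubling (so that Mastroianni--Totik's theorem for doubling weights applies directly), and a ``bad'' region consisting of small $n$-dependent neighborhoods of the zeros of the $g_i$, where the super-polynomial decay of $\omega$ must absorb the losses coming from local Markov and Remez-type inequalities. I would first reduce to the case $s=1$: write $\omega u = \omega_1\cdot(\omega_2\cdots\omega_s\,u)$; away from the zeros of $g_1$, the second factor is still doubling (respectively $A^*$), so an induction on $s$ handles the general case. Thus let $\omega=\exp(-F\circ g)\in\Omega$ with zeros $\{t_1,\ldots,t_k\}$ of $g$, each of multiplicity one.

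For a radius $r_n$ to be chosen later, set $I_j=(t_j-r_n,t_j+r_n)$. On $\T\setminus\bigcup_j I_j$ the function $g$ is bounded away from $0$, so $F\circ g$ is smooth with bounded derivatives and $\omega$ is bounded above and below; hence $\omega u$ is doubling there and the classical weighted Bernstein inequality gives
\[
\bigl\|T_n'\bigr\|_{L_p(\omega u,\,\T\setminus\cup_j I_j)}\le Cn\,\|T_n\|_{L_p(\omega u)}.
\]
On each bad interval $I_j$, I would combine a pointwise local Markov estimate of the form $|T_n'(t)|\le Cn^2\sup_{2I_j}|T_n|$ with a Remez/Nikolskii bound to re-express $\sup_{2I_j}|T_n|$ in terms of the full $L_p(\omega u)$-norm. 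Since $g$ has a simple zero at $t_j$, $|g(t)|\asymp|t-t_j|$ near $t_j$, and by (F2), $\omega(t)\le\exp(-F(cr_n))$ on $I_j$; choosing $r_n$ so that $F(r_n)\gtrsim\log n$ then makes $\int_{I_j}\omega u$ decay faster than any power of $n$, which beats the polynomial losses produced by Markov and Remez.

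The main obstacle is the calibration of $r_n$: we need $r_n$ small enough that $\omega|_{I_j}$ absorbs the $n^2$ Markov factor together with an $e^{Cnr_n}$ Remez factor, yet large enough that the good-region estimate is not vacuous. The regular-variation-like condition (F4), $A_2\le|F'(x)|x/F(x)\le A_1$, is what makes this calibration possible: since $F$ behaves essentially like a negative power of $x$, the choice $r_n\asymp F^{-1}(C\log n)$ satisfies $nr_n\to 0$, so the Remez factor stays bounded and the smallness of $\omega$ dominates. Gluing the two regional estimates yields \eqref{*}; the $p=\infty$ case follows by the same scheme, with the $A^*$ condition replacing doubling in the good region and uniform estimates replacing the integral Remez bound in the bad region.
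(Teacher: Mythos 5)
Your plan relies on a calibration that simply does not hold, and on an application of the doubling-weight Bernstein inequality that is not legitimate; as written the argument has genuine gaps and does not follow the paper's route at all. The paper avoids a good/bad domain split entirely and instead approximates $\omega$ (or $\omega^{1/p}$) by its Fourier partial sum $\omega_{Kn}$ with \emph{super-exponentially} small error $\exp(-c\,nx_1(n))$, where $x_1(n)$ solves $F(x)=nx$; it then applies classical Bernstein to the degree-$(K+1)n$ polynomial $T_n\omega_{Kn}$ and absorbs the error terms with a weighted Remez inequality (Theorem \ref{remez-theorem}) valid at the scale $x_1(n)$.

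The first concrete error is in your calibration: you claim $r_n\asymp F^{-1}(C\log n)$ gives $nr_n\to 0$. For $F(x)=x^{-\alpha}$ this gives $r_n\asymp(\log n)^{-1/\alpha}$, hence $nr_n\asymp n(\log n)^{-1/\alpha}\to\infty$, so the Remez factor $e^{Cnr_n}$ explodes and the "smallness of $\omega$ dominates" step collapses. (In the paper one works with $r_n\asymp x_1(n)=n^{-1/(\alpha+1)}$, for which $nx_1(n)\to\infty$ as well; the point is not to make $nr_n$ small, but to match the Remez loss $e^{Cnx_1(n)}$ exactly against the decay $\exp(-nx_1(n))$ of $\omega$ on $\T\setminus B_{x_1(n)}$, which is a much finer balance than the one you describe.)

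The second problem is the "good region" step. Mastroianni--Totik's Bernstein inequality for doubling weights is a global inequality on $\T$; you cannot simply declare that "$\omega u$ is doubling on $\T\setminus\bigcup I_j$" and conclude $\|T_n'\|_{L_p(\omega u,\,\T\setminus\bigcup I_j)}\le Cn\|T_n\|_{L_p(\omega u)}$. Doubling is not a local property of a restriction, and in fact on the good region $\omega$ is only bounded below by $\exp(-F(cr_n))$, a bound that deteriorates with $n$; any doubling constant you could extract is $n$-dependent, which renders the inequality useless. Moreover, when $g$ has several zeros with different $|g'|$, the quantities $\sup_{I_j}\omega$ and $\inf_{\T\setminus\bigcup I_{j'}}\omega$ differ by a factor that diverges as $n\to\infty$, so the cancellation you implicitly rely on to pass from unweighted suprema to weighted ones fails. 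Finally, the inductive reduction to $s=1$ does not work as stated: "$\omega_2\cdots\omega_s\,u$ is doubling away from the zeros of $g_1$" is neither true nor a usable hypothesis, since the $g_i$, $i\ge 2$, carry their own zeros and Bernstein is not a local statement. The paper's induction is set up differently, via the abstract classes $\mathcal{B}(p)$ and $\mathcal{R}_{int}(p)$: it first proves $\omega\in\Omega,\ u\in\mathcal{B}(p)\cap\mathcal{R}_{int}(p)\Rightarrow\omega u\in\mathcal{B}(p)$ (Theorem \ref{Th5.1'}) and separately $\omega u\in\mathcal{R}_{int}(p)$ (Corollary \ref{remez-int}), and only then iterates.
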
 For example,
inequality~\eqref{*} holds for the following weight:
$$
\omega(t)=\exp(-1/\sin^2 t-1/\cos^4t).
$$
To prove Bernstein's inequality~\eqref{*} in the case when $\omega=\omega_1\in\Omega$, i.e., $s=1$, we use approximation properties of $\omega$.
To verify \eqref{*}
% Unlike the case when $\omega=\omega_1\in\Omega$, i.e., $s=1$, to show Bernstein's inequality~\eqref{*}
  with the product of weights each of which is from the class $\Omega$,
we need a new technique based on introduction of
 weighted classes for which Bernstein and Remez inequalities hold.
In particular, $\omega_i\in\Omega$ and $u$ as in Theorem \ref{tri**} belong to these classes.
This technique is developed in Sections \ref{section-remez} and \ref{section-lp}.
 %A careful discussion is given in Sections 6 and 7.

A necessary condition for Bernstein's inequality is given by the following result.

\begin{theorem}
\label{neg1}
 Let $\omega\in C(\T)$ be an arbitrary  weight
function satisfying the following conditions: $\omega \searrow$ on
$(-\epsilon,0)$, $\omega(0)=0$, $\omega \nearrow$ on
$(0,\epsilon)$, and moreover,
\begin{equation}\label{lim}
 \limsup_{t\to 0}\frac{\log{\omega(rt)}}{\log{\omega(t)}}=\infty
\quad\text{for some } \; r\in (0,1).
\end{equation}
Then for each $0<p\le\infty$ there exist a sequence of positive
integers $K_n\to\infty$ as $n\to\infty$, and a sequence of
trigonometric polynomials $Q_n$ of degree at most $K_n$ such that
$$
\lim_{n\to\infty}\frac{\|Q_n'\|_{L_p(\omega)}}{K_n\|Q_n\|_{L_p(\omega)}}=\infty.
$$
\end{theorem}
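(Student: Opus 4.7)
The plan is to construct, for each $n$, an explicit trigonometric polynomial $Q_n$ that witnesses the breakdown of Bernstein's inequality; the construction will exploit the extreme asymmetry between $\omega(rt)$ and $\omega(t)$ encoded in \eqref{lim}. First I extract from \eqref{lim} a sequence $t_n\downarrow 0^+$ and numbers $M_n\to\infty$ such that
\[
|\log\omega(rt_n)|\ \ge\ M_n\,|\log\omega(t_n)|,
\qquad\text{i.e.}\qquad \omega(rt_n)\le \omega(t_n)^{M_n}.
\]
Note that the ratio $\log\omega(rt)/\log\omega(t)$ is monotone nonincreasing in $r\in(0,1)$ (because $\omega$ is monotone on $(0,\epsilon)$ and all logs in sight are negative), so one may freely replace $r$ by any smaller value if convenient.

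For the polynomials I would try the Chebyshev-type construction
\[
Q_n(t) = T_{K_n}\!\bigl(\phi_n(\cos t)\bigr),\qquad
\phi_n(x) = -1 + \frac{2(x+1)}{1+\cos(rt_n)},
\]
where $T_{K_n}$ is the Chebyshev polynomial of the first kind. By the choice of $\phi_n$, one has $|Q_n(t)|\le 1$ for $|t|\ge rt_n$ and $|Q_n(0)| = T_{K_n}(\phi_n(1)) \sim \cosh(K_n\, rt_n)$. I would take the degree $K_n$ so that $L_n := K_n\,rt_n \to\infty$ lies in the window
\[
\frac{|\log\omega(t_n)|}{p}\ \ll\ L_n\ \ll\ \frac{|\log\omega(rt_n)|}{p},
\]
which is nonempty for all large $n$ by the choice of $t_n$. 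The upper endpoint ensures $e^{pL_n}\omega(rt_n)\to 0$, so the obvious splitting
\[
\|Q_n\|_{L_p(\omega)}^{\,p} \ \le\ \cosh(L_n)^{p}\cdot 2rt_n\,\omega(rt_n)\ +\ W
\]
gives $\|Q_n\|_{L_p(\omega)} \sim W^{1/p}$; the lower endpoint will be what allows the derivative to "see" the much larger weight $\omega(t_n)$.

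The crucial step is the lower bound
\[
\|Q_n'\|_{L_p(\omega)}^{\,p}\ \gtrsim\ e^{pL_n}\,t_n^{1-p}\,\omega(t_n),
\]
which I would extract from an intermediate region of the form $t\in(rt_n,\,t_n)$: here $|Q_n|$ must drop from values of order $e^{L_n}$ down to $O(1)$, so by a mean-value argument combined with the inverse Bernstein-Markov inequality (to keep $|Q_n'|$ from being too small on too small a set), one finds $|Q_n'(t)|\gtrsim e^{L_n}/t_n$ on a subinterval of length comparable to $t_n$ where $\omega(t)$ is comparable to $\omega(t_n)$. Combining the two bounds,
\[
\frac{\|Q_n'\|_{L_p(\omega)}}{K_n\,\|Q_n\|_{L_p(\omega)}}\ \gtrsim\ \frac{e^{L_n}\,t_n^{1/p-1}\,\omega(t_n)^{1/p}}{K_n\, W^{1/p}}
\ =\ \frac{r}{L_n}\bigl(e^{pL_n}\omega(t_n)\bigr)^{1/p}\,W^{-1/p},
\]
and the choice $pL_n\gg|\log\omega(t_n)|$ forces $e^{pL_n}\omega(t_n)\to\infty$ faster than any power of $L_n$, giving the desired divergence. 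The case $p=\infty$ is handled by the analogous pointwise estimate, using that $\omega$ is bounded on the intermediate region.

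The main obstacle is the lower estimate on $\|Q_n'\|_{L_p(\omega)}$: the Chebyshev polynomial's derivative naturally peaks at $t=\pm rt_n$ where the weight is only $\omega(rt_n)$, so the naive bound captures $\omega(rt_n)$ rather than $\omega(t_n)$ and merely reproduces the standard Bernstein bound. Converting it into a bound involving $\omega(t_n)$ requires either (a) a refinement of the polynomial, e.g.\ multiplying by a Jackson-type kernel centered near $t_n$ to shift the derivative's support, or (b) a bootstrap of the form: $Q_n(rt_n)$ and $Q_n(t_n)$ differ by $O(1)$, so $\int_{rt_n}^{t_n}|Q_n'|\ge c$, and Hölder plus monotonicity of $\omega$ yield the required $L_p(\omega)$ lower bound. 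Getting the right exponent on $\omega(t_n)$ (as opposed to $\omega(rt_n)$) in this lower bound is the delicate step on which the whole argument hinges, and it is here that the full strength of \eqref{lim}---not just $\omega(rt_n)/\omega(t_n)\to 0$---is needed.
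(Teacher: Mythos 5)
Your Chebyshev‑type construction is in the same spirit as the paper's (the paper uses $Q_n(t)=\mathcal T_{K_n}(1+a_n^2-\sin^2 t)$, which after the substitution $x=\cos t$ is essentially a rescaled Chebyshev polynomial whose oscillation region ends near $t=\pm a_n$), and your extraction of a sequence $t_n\downarrow 0$ from the $\limsup$ is also the right first move. But the crucial lower bound on $\|Q_n'\|_{L_p(\omega)}$, which you yourself flag as the delicate step, is not established by either of your proposed fixes, and option~(b) contains a concrete error.

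The mean‑value/bootstrap argument in (b) is based on the claim that ``$|Q_n|$ must drop from values of order $e^{L_n}$ down to $O(1)$'' on the interval $(rt_n,t_n)$. That is not what your polynomial does: by construction $|Q_n(t)|\le 1$ already for all $|t|\ge rt_n$, so on $(rt_n,t_n)$ the polynomial merely oscillates between $-1$ and $1$, and $\int_{rt_n}^{t_n}|Q_n'|$ is $O(1)$, not $O(e^{L_n})$. The genuine drop from $\cosh(L_n)$ to $1$ happens on $(0,rt_n)$, where $\omega\le\omega(rt_n)$, so any integral bound over that region reproduces $\omega(rt_n)$, not $\omega(t_n)$, exactly the obstacle you noted. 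Consequently the claimed estimate $\|Q_n'\|_{L_p(\omega)}^p\gtrsim e^{pL_n}t_n^{1-p}\omega(t_n)$ has no proof, and without it the whole divergence collapses. Option~(a) (multiplying by a Jackson kernel) is not carried out either and would change the degree and the relevant Remez geometry in ways that would have to be controlled.

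The paper resolves this differently, and the difference is structural, not cosmetic. Rather than lower‑bounding $\|Q_n'\|_{L_p(\omega)}$ by integrating the derivative over a region where the weight is friendly, the paper locates a point $b_n$ where $|Q_n\omega|$ attains its maximum, proves (via a careful two‑sided choice of $a_n$, encoded in conditions like $Q_n(b_n)\omega(b_n)\to\infty$ and $\mathcal T_n(1+a_n^2)\omega(ra_n)\to 0$) that $b_n=a_n(1+o(1))$, i.e.\ that the weighted maximum sits right at the edge of the oscillation interval, and then uses the \emph{pointwise} amplification
\[
\frac{\mathcal T_n'(x)}{\mathcal T_n(x)}\ \ge\ \frac{n}{4\sqrt{x^2-1}},\qquad x>1+\tfrac1{n^2},
\]
evaluated at $x=1+a_n^2-\sin^2 b_n\downarrow 1$, to get $|Q_n'(b_n)|/(n|Q_n(b_n)|)\to\infty$. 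The weight cancels, so one never has to transfer mass from the region near $0$ where $\omega$ is tiny. The auxiliary sequence $\lambda_n\to 1^-$ and the equation $\xi(z)=-Mz$ (Lemma~\ref{M}) are precisely the devices that produce the right $a_n$ and degree $K_n$ simultaneously. If you want to salvage your version, you would need to replace the $L_p$ lower bound by this kind of single‑point ratio argument at the weighted maximum, and to prove that the weighted maximum of your $Q_n$ is located near $rt_n$ rather than at $0$, which is exactly the content of the paper's conditions (\ref{p11})--(\ref{p12}) adapted to your parametrization.
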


Theorem~\ref{tri**} and Theorem~\ref{neg1} provide a sharp condition
on the growth properties of a weight $\omega$ near the origin.
Specifically, if a weight $\omega$,
 satisfying  $\omega \searrow$ on $(-\epsilon,0)$, $\omega(0)=0$, $\omega
\nearrow$ on $(0,\epsilon)$, is such that Bernstein's inequality~(\ref{berw}) holds, then
 %such a weight
 $\omega$
necessarily satisfies the following condition: for all $r\in(0,1)$,
\begin{equation}\label{limL}
\limsup_{t\to 0} \frac{\log{\omega(rt)}}{\log{\omega(t)}}=L<\infty.
\end{equation}
On the other hand, any $\omega\in\Omega$ satisfies~\eqref{limL}. Moreover, for each $r\in(0,1)$ and $L>1$, the weight $\omega(t)=\exp
(-|\sin t|^{-\alpha})$ fulfills (\ref{limL})  with $\alpha=-\log_rL$.
Then
 $\omega\in \Omega$ and by Theorem~\ref{tri**}  Bernstein's inequality~(\ref{berw}) holds for this weight.
 %and belongs to $\Omega$, so the Bernstein inequality holds.

If in (\ref{lim}) the limit (not only the limit superior) exists,
then a stronger result is true:
\begin{theorem}\label{b}
 Let $\omega\in C(\T)$ be an arbitrary  weight
function satisfying the following conditions: $w \searrow$ on
$(-\epsilon,0)$, $\omega(0)=0$, $\omega \nearrow$ on
$(0,\epsilon)$ and moreover,
\begin{equation*}
\lim_{t\to 0} \frac{\log{\omega(rt)}}{\log{\omega(t)}}=\infty
\quad\text{for each } r\in (0,1).
\end{equation*}
Then for each $0<p\le\infty$ there exists a sequence of
trigonometric polynomials $Q_n$
%\inT_n$
 of degree at most $n$ such that
$$
\lim_{n\to\infty}\frac{\|Q_n'\|_{L_p(\omega)}}{n\|Q_n\|_{L_p(\omega)}}=\infty.
$$
\end{theorem}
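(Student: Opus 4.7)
The strategy is to adapt the proof of Theorem~\ref{neg1} so as to produce a counterexample polynomial of degree at most $n$ for every $n$, rather than only along some subsequence $K_n\to\infty$. In Theorem~\ref{neg1} one selects a scale $t_k\to 0$ along which the $\limsup$ condition realizes its large values and builds a trigonometric polynomial of degree $K_k\sim 1/t_k$ concentrated near $0$ at scale $t_k$. The strictly stronger hypothesis of Theorem~\ref{b}---that $\log\omega(rt)/\log\omega(t)\to\infty$ as $t\to 0^+$ for \emph{every} $r\in(0,1)$---delivers the same quantitative input ($\omega(rt)\le\omega(t)^N$ for any $N$, for all sufficiently small $t$) along \emph{every} sequence $t_n\to 0$; in particular along $t_n=c/n$ for a suitable constant $c$, which yields polynomials of degree $\sim n$ rather than along some sparse subsequence.

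Following this template I would construct $Q_n$ of degree at most $n$ as a peak polynomial concentrated near the origin at scale $t_n$. A natural Chebyshev-based candidate is
\begin{equation*}
Q_n(t)=\frac{T_n\bigl((1+\cos t)/(1+\cos t_n)\bigr)}{T_n\bigl(2/(1+\cos t_n)\bigr)},
\end{equation*}
which equals $1$ at $t=0$, decays exponentially in $nt_n$ outside $[-t_n,t_n]$, and has derivative of magnitude $\sim 1/t_n \sim n$ across the transition region $[rt_n,t_n]$, say with $r=1/2$. The norms $\|Q_n\|_{L_p(\omega)}$ and $\|Q_n'\|_{L_p(\omega)}$ are then estimated by splitting the integrals into the peak $[-rt_n,rt_n]$, the transition $[rt_n,t_n]$, and the tail $|t|>t_n$. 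For $\|Q_n\|_{L_p(\omega)}$ one uses that $\omega$ is extremely small on the peak; for $\|Q_n'\|_{L_p(\omega)}$ one uses a H\"older/mean-value argument across the transition together with the fact that $\omega(t_n)\gg \omega(rt_n)$.

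The key quantitative fact is that, after dividing by $n^p\|Q_n\|_{L_p(\omega)}^p$ and using $nt_n=c$, the Bernstein ratio is controlled below by a positive power of $\omega(t_n)/\omega(rt_n)$, which diverges faster than any polynomial in $n$ under the hypothesis. The main obstacle---the same one met in Theorem~\ref{neg1}---is the delicate calibration guaranteeing that the lower bound for $\|Q_n'\|_{L_p(\omega)}$ effectively sees $\omega$ at the scale $t_n$ (where it is comparatively large) rather than at $rt_n$ (where it is super-tiny), and simultaneously that the tail of $Q_n$ does not swamp $\|Q_n\|_{L_p(\omega)}$. Once this calibration is in place, the only substantive change from Theorem~\ref{neg1} is that the required estimate $\omega(rt_n)\le\omega(t_n)^N$ now holds at \emph{every} scale $t_n=c/n$, so the construction produces polynomials of \emph{every} degree $n$, giving the conclusion.
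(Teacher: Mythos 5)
The heart of the matter is the choice of scale, and the proposal gets it wrong: you take $t_n=c/n$ so that $nt_n$ is bounded, but the paper's construction requires $na_n\to\infty$, and in fact no bounded choice of $nt_n$ can work.

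Here is why $t_n=c/n$ fails. With the Chebyshev peak $Q_n(t)=\mathcal{T}_n\bigl((1+\cos t)/(1+\cos t_n)\bigr)/\mathcal{T}_n\bigl(2/(1+\cos t_n)\bigr)$, one has $2/(1+\cos t_n)\approx 1+t_n^2/4$, so $\mathcal{T}_n\bigl(2/(1+\cos t_n)\bigr)\asymp \exp(c_0 n t_n)$ by \eqref{p3}. If $nt_n$ is bounded, this is $O(1)$, so $|Q_n(t)|\gtrsim 1$ for all $|t|\geq t_n$: the ``peak'' has a tail comparable to its top and is not concentrated at all. Consequently $\|Q_n\|_{L_p(\omega)}$ is bounded below by a positive constant (coming from the region $|t|\geq t_n$, where $\omega$ is of order one), while classically $\|Q_n'\|_{L_p(\omega)}\leq Cn\|Q_n\|_{L_\infty(\T)}\,\|\omega\|_{L_1(\T)}^{1/p}\leq Cn$. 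The Bernstein ratio $\|Q_n'\|_{L_p(\omega)}/(n\|Q_n\|_{L_p(\omega)})$ is therefore $O(1)$ and cannot diverge. The same conclusion holds for $p=\infty$. So the ``key quantitative fact'' you assert --- that after dividing by $n^p\|Q_n\|_{L_p(\omega)}^p$ the ratio is controlled by a power of $\omega(t_n)/\omega(rt_n)$ --- is false at the scale $t_n=c/n$; the tail contribution, which you relegate to a calibration issue, actually dominates and kills the estimate.

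The paper's proof resolves exactly this tension by introducing a second parameter $\lambda_n\to 1^-$ and choosing $a_n$ as the solution of $\log\omega(z)=-\tfrac12 nz\sqrt{1-\lambda_n^2}$; this forces $na_n\to\infty$, so the peak polynomial $\mathcal{T}_n(1+a_n^2-\sin^2 t)$ does decay sharply off the peak, while the hypothesis (through Lemma \ref{seq}) allows $\lambda_n$ to tend to $1$ slowly enough that $na_n$ remains below $|\log\omega(ra_n)|$ for every fixed $r<1$ (conditions \eqref{p13} and \eqref{p14}). Note also that your description of the paper's Theorem~\ref{neg1} as choosing degree $K_k\sim 1/t_k$ is inaccurate; there too $K_n a_n\to\infty$ (indeed $K_n\asymp |\log\omega(c_n)|\sqrt{n}/c_n$), so the issue is already present at that stage. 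Thus the proposal is not a minor variant with the calibration left to the reader: the specific scale you commit to is incompatible with the conclusion, and the missing calibration is precisely the content of the proof.
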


The paper is organized as follows. In Section \ref{section-growth} we discuss %several important
growth properties of weights from the class $\Omega$ which we will use further on.
Section \ref{section-approximation} presents the order of trigonometric approximation of functions from $\Omega$ as well as their
derivatives. In Section \ref{section-l1} we give the proof of Bernstein's
inequality with $\Omega$ weights in $L_1$.
 We will use it as a model case to prove
 the general Bernstein inequality (\ref{*}) in Section \ref{section-lp}.

In Section \ref{section-remez} we establish weighted Remez inequalities for trigonometric and algebraic polynomials.
Section \ref{section-lp} gives the proof of the general Bernstein inequality for $p\in (0,\infty]$.
 Theorem \ref{tri**} is a corollary of this result.
In Section \ref{section-algebraic} we study the weighted Bernstein and Markov inequalities for
algebraic polynomials on $[-1,1]$.  Section \ref{section-nikolskii} provides weighted Nikolskii's inequalities for trigonometric and algebraic polynomials.

Finally, in Section \ref{section-necessary} we prove a necessary condition for  Bernstein's
inequality~\eqref{berw} to hold. Namely, we verify Theorems
\ref{neg1} and \ref{b} as well as a result on the sharpness of
Theorem~\ref{neg1}.

Concerning algebraic polynomials on $[-1,1]$, it is important to
mention that for weights from the  class $\mathcal{W}$ the  Markov-Bernstein inequalities were obtained by Lubinsky and Saff
(cf. \cite{lub-saff} and the book \cite{levin}), see discussion in Section \ref{section-algebraic}.
A typical example of weights from the class $\mathcal{W}$ is
$\omega_{\alpha}(x)=\exp{\big(-(1-x^2)^\alpha\big)},\, \alpha>0$.
 We note that
 using \cite{lub-saff} one can also derive weighted Bernstein's inequality
\begin{equation*}
\|\sqrt{1-x^2}P'_n(x)\omega_{\alpha}(x)\|_{L_{\infty}[-1,1]} \le
C(\alpha)\, n \|P_n(x)\omega_{\alpha}(x)\|_{L_{\infty}[-1,1]},
\end{equation*}
 see Remark \ref{remark-lub}.
 We also note
  that  Bernstein's inequalities for algebraic polynomials were recently proved in \cite{in2, in1} for the weight
$\omega=\omega_\alpha u$, where $u$ is doubling. In Section \ref{section-algebraic} we deal with a more general class of weights. Our proof for the algebraic case is based on Bernstein's inequality for trigonometric polynomials from Section \ref{section-lp}.

%We write $F \approx G$  if  $\,\,
%F \le C_1 G$ and $G\le C_2 F$ for some positive constants $C_1$ and $C_2$ independent of
% essential quantities involved in the expressions $F$ and $G$.

By $C, C_i$  $(c, c_i)$ we will denote positive large (small,
respectively) constants that may be different on different
occasions. Also, below we will write that $C(\omega)$ if
$C(A,A_1,A_2,B,D)$, where $A,A_1,A_2,B,D$ are from the definition of
the class $\Omega$. Moreover, for the positive sequences $\{a_n\}$
and $\{b_n\}$,  $a_n\asymp b_n$ means that
$$
c\le \frac{a_n}{b_n}\le C.
$$

\vspace{6mm}

\section{Growth properties of  $\Omega$-functions}\label{section-growth}
Let $F:[-A,A]\setminus\left\{0\right\}\to (0,\infty)$ be an even
$C^{\infty}$ function on $(0,A]$ satisfying ($F_1$)--$(F_4).$ %In this section all constants depend on $A, A_1, A_2$.

\begin{definition}
%{\bf Definition 1.}
 For each $n\ge F(A)$ we denote by $x_0(n)$ a
unique positive solution of the equation $$F(x)=n.$$
\end{definition}

\begin{definition}
%{\bf Definition 2.}
 For each $n\ge F(A)/A$ we denote by $x_1(n)$ a
unique positive solution of the equation $$F(x)=nx.$$
\end{definition}
%Below we will assume that $n\ge n_0\ge\max\left\{F(A),F(A)/A\right\}$.
Note that both sequences $\{x_0(n)\}$ and $\{x_1(n)\}$ are decreasing. % as $n$ increasing.
%\subsection{Properties of $F$}
\begin{lemma}
\label{Lemma1} There exist positive constants $C=C(A,A_1,A_2)$ and
$c=c(A,A_1,A_2)$ such that
$$
cx^{-A_2}<F(x)<Cx^{-A_1}, \quad x\in(0,A].
$$
\end{lemma}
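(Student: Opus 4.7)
The plan is to reduce the problem to integrating a logarithmic-derivative inequality coming from $(F_4)$.

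First I would note that by $(F_2)$ the function $F$ is decreasing on $(0,A]$, so $F'\le 0$ there and $|F'(x)|=-F'(x)$. Dividing $(F_4)$ through by $x$ then reads
\begin{equation*}
-\frac{A_1}{x}\;\le\;\frac{F'(x)}{F(x)}\;\le\;-\frac{A_2}{x},\qquad x\in(0,A].
\end{equation*}
Since $F$ is positive and $C^{\infty}$ on $(0,A]$, the middle expression is exactly $(\log F)'(x)$, so I have pointwise bounds on the logarithmic derivative of $F$.

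Next I would fix $x\in(0,A)$ and integrate the chain
\begin{equation*}
-\frac{A_1}{t}\;\le\;(\log F)'(t)\;\le\;-\frac{A_2}{t}
\end{equation*}
with respect to $t$ over $[x,A]$. This yields
\begin{equation*}
-A_1\log\!\frac{A}{x}\;\le\;\log F(A)-\log F(x)\;\le\;-A_2\log\!\frac{A}{x},
\end{equation*}
which after exponentiation and rearrangement becomes
\begin{equation*}
F(A)\Bigl(\frac{A}{x}\Bigr)^{\!A_2}\;\le\;F(x)\;\le\;F(A)\Bigl(\frac{A}{x}\Bigr)^{\!A_1}.
\end{equation*}
Taking $c=F(A)\,A^{A_2}$ and $C=F(A)\,A^{A_1}$ gives the desired two-sided estimate $cx^{-A_2}\le F(x)\le Cx^{-A_1}$.

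There is essentially no obstacle here: the argument is just a clean application of the fundamental theorem of calculus to the differential inequality $(F_4)$. The only points worth checking carefully are the sign (handled by $(F_2)$) and the fact that both inequalities in $(F_4)$ give finite contributions on $[x,A]$, which is fine because $\log(A/x)$ is integrable against $1/t$ on any interval $[x,A]\subset(0,A]$. Condition $(F_1)$ plays no role in the proof; $(F_3)$ is likewise unused at this stage. The constants $c,C$ depend only on $A,A_1,A_2$ (and on the value $F(A)$, which is fixed once $F$ is fixed, and is absorbed into the notation $C(\omega)$ introduced at the end of the introduction).
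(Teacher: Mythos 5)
Your proof is correct and is essentially the same as the paper's: both integrate the logarithmic-derivative bound $-A_1/t\le (\log F)'(t)\le -A_2/t$ (coming from $(F_4)$ together with the sign from $(F_2)$) over $[x,A]$ and then exponentiate, with $c=F(A)A^{A_2}$ and $C=F(A)A^{A_1}$. The only difference is cosmetic: you integrate the two-sided chain at once, whereas the paper handles the lower and upper bounds in two separate but identical passes.
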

\begin{proof}
By property (F4) we have
$$
|F'(x)|=-F'(x)\ge A_2\frac{F(x)}x,\quad x\in(0,A].
$$
Therefore,
$$
\log{F(x)}-\log{F(A)}=\int_x^A-(\log{F(t)})'dt\ge
\int_x^A\frac{A_2}tdt=A_2(\log{A}-\log{x}),
$$
which yields
$$
F(x)\ge F(A)A^{A_2}x^{-A_2}.
$$
Similarly, the inequality
$$
|F'(x)|\le A_1\frac{F(x)}x,\quad x\in(0,A],
$$
implies
$$
F(x)\le F(A)A^{A_1}x^{-A_1}.
$$
\end{proof}
\begin{lemma}
\label{Lemma2} For each $R>0$ there exist positive constants
$C=C(R,A_1,A_2)$ and $c=c(R,A_1,A_2)$ such that
$$
cx_0(Rn)<x_0(n)<Cx_0(Rn)
$$
for all $n$  large enough.
\end{lemma}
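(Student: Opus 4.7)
My plan is to use property $(F_4)$ to control the logarithmic derivative of $F$, and then compare values of $F$ at $x_0(n)$ and $x_0(Rn)$ via integration, as was already done in the proof of Lemma~\ref{Lemma1}.

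First, I would establish a two-sided comparison: for any $0<y<x\le A$,
$$
\log F(y)-\log F(x)=\int_y^x\frac{-F'(t)}{F(t)}\,dt,
$$
and by $(F_4)$ the integrand satisfies $A_2/t\le -F'(t)/F(t)\le A_1/t$. Integrating gives
$$
\left(\frac{x}{y}\right)^{A_2}\le \frac{F(y)}{F(x)}\le \left(\frac{x}{y}\right)^{A_1},\qquad 0<y<x\le A.
$$

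Next, since $F$ is decreasing by $(F_2)$ and $F(x_0(m))=m$, the sequence $\{x_0(m)\}$ is decreasing. For $n$ large enough, both $x_0(n)$ and $x_0(Rn)$ lie in $(0,A]$. If $R\ge 1$, take $x=x_0(n)$ and $y=x_0(Rn)$; then $F(y)/F(x)=Rn/n=R$, so the displayed inequality becomes
$$
\left(\frac{x_0(n)}{x_0(Rn)}\right)^{A_2}\le R\le \left(\frac{x_0(n)}{x_0(Rn)}\right)^{A_1},
$$
and solving yields $R^{1/A_1}\,x_0(Rn)\le x_0(n)\le R^{1/A_2}\,x_0(Rn)$. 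If $0<R<1$, swap the roles of $x$ and $y$ (now $x_0(Rn)>x_0(n)$) and the same reasoning produces $R^{1/A_2}\,x_0(Rn)\le x_0(n)\le R^{1/A_1}\,x_0(Rn)$. In either case one obtains positive constants $c,C$ depending only on $R,A_1,A_2$ as required.

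There is no substantial obstacle here: property $(F_4)$ is exactly the hypothesis one needs to control $F$ from both sides by powers, and the rest is a direct manipulation. The only minor subtlety is being careful about whether $R\ge 1$ or $R<1$ so that the inequalities $y<x$ needed for the integration are satisfied.
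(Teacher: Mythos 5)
Your proof is correct and rests on the same key ingredient as the paper's: integrating the bound $A_2/t\le -F'(t)/F(t)\le A_1/t$ from $(F_4)$ between $x_0(Rn)$ and $x_0(n)$. The only difference is that you work with $\log F$ and extract both bounds symmetrically from the two-sided power estimate $(x/y)^{A_2}\le F(y)/F(x)\le (x/y)^{A_1}$, yielding $c=R^{1/A_1}$, $C=R^{1/A_2}$ (for $R\ge1$), whereas the paper gets the lower bound trivially from monotonicity of $F$ (taking $c=1$ when $R\ge 1$) and the upper bound by integrating $-F'$ directly, using $F(t)\ge n$ on the interval, to obtain $C=\exp((R-1)/A_2)$; your treatment is slightly cleaner and mirrors the proof of Lemma~\ref{Lemma1} more closely, but the two are essentially the same argument.
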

\begin{proof}
Let us prove the lemma for $R\ge 1$. For $R<1$ the proof is similar.
Since $F$ is decreasing on $(0,A]$ we take $c=1$. So, it is
enough to show that $x_0(n)<C x_0(Rn)$. By definition of $x_0(n)$
and $(F4)$ we have
\begin{eqnarray*}
(R-1)n&=&|F(x_0(n))-F(x_0(Rn))|=\int_{x_0(Rn)}^{x_0(n)}-F'(t)dt
\\
&\ge& A_2 \int_{x_0(Rn)}^{x_0(n)}F(t)\frac{dt}t\ge
A_2n(\log{x_0(n)}-\log{x_0(Rn)}).
\end{eqnarray*}
Thus, one may choose $C=\exp\left((R-1)/A_2\right)$.
\end{proof}
\begin{lemma}
\label{Lemma2.5} There exists a positive constant
$\alpha=\alpha(A,A_1,A_2)$  such that
$$
nx_1(n)\ge n^\alpha
$$
for all $n$  large enough.
\end{lemma}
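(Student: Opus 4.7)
The plan is to combine the defining equation $F(x_1(n)) = n x_1(n)$ with the polynomial lower bound on $F$ supplied by Lemma~\ref{Lemma1}, and solve for $x_1(n)$.

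First, I would invoke Lemma~\ref{Lemma1} to get $F(x) \ge c x^{-A_2}$ on $(0,A]$, valid with some constant $c = c(A, A_1, A_2) > 0$. Evaluating both sides at $x = x_1(n)$ and using $F(x_1(n)) = n x_1(n)$ gives
\begin{equation*}
n\, x_1(n) \;=\; F(x_1(n)) \;\ge\; c\, x_1(n)^{-A_2},
\end{equation*}
which rearranges to $x_1(n)^{1+A_2} \ge c/n$, hence
\begin{equation*}
x_1(n) \;\ge\; \Bigl(\frac{c}{n}\Bigr)^{1/(1+A_2)}.
\end{equation*}

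Multiplying through by $n$ then yields
\begin{equation*}
n\, x_1(n) \;\ge\; c^{1/(1+A_2)}\, n^{\,A_2/(1+A_2)}.
\end{equation*}
Since $A_2 > 0$, the exponent $A_2/(1+A_2)$ is strictly positive, so the right-hand side grows like a positive power of $n$. Choosing any $\alpha \in \bigl(0, A_2/(1+A_2)\bigr)$ — for instance $\alpha = A_2/\bigl(2(1+A_2)\bigr)$ — the factor $c^{1/(1+A_2)} n^{A_2/(1+A_2) - \alpha}$ tends to infinity, and in particular exceeds $1$ for all $n$ large enough. This gives $n x_1(n) \ge n^\alpha$ for such $n$, with $\alpha$ depending only on $A, A_1, A_2$ as required.

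There is really no obstacle here: the whole argument is a one-line substitution into the lower bound of Lemma~\ref{Lemma1}. The only minor point to note is that the bound $F(x) \ge cx^{-A_2}$ was established only on $(0,A]$, but $x_1(n) \in (0, A]$ for $n \ge F(A)/A$ by construction, so the substitution is legitimate precisely in the range where $x_1(n)$ is defined.
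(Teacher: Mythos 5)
Your proof is correct and takes exactly the route the paper intends: the paper's proof is the one-line remark that the lemma ``immediately follows'' from $F(x_1(n))=nx_1(n)$ and Lemma~\ref{Lemma1}, and you have simply written out the substitution and algebra that this remark compresses.
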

\begin{proof}
Since $F(x_1(n))=nx_1(n)$ the proof of the lemma immediately follows
from Lemma~\ref{Lemma1}.
\end{proof}
By monotonicity of $F$ we have $x_1(2n)<x_1(n)$ and hence,
$2nx_1(2n)=F(x_1(2n))>F(x_1(n))=nx_1(n)$. In other words,
$x_1(2n)<x_1(n)<2x_1(2n)$. However, the following stronger statement holds.
\begin{lemma}
\label{Lemma3} There exists a positive constant $\epsilon=\epsilon(A,A_1,A_2)$ such that
$$
(1+\epsilon)x_1(2n)<x_1(n)<(2-\epsilon)x_1(2n)
$$
for all $n$ large enough.
\end{lemma}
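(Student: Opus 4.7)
The plan is to translate the two defining equations of $x_1(n)$ and $x_1(2n)$ into one clean identity and then extract the desired two-sided quantitative bound on the ratio $x_1(n)/x_1(2n)$ via property $(F_4)$. Let me set $a = x_1(2n)$ and $b = x_1(n)$, so that $a < b$ (as already noted before the lemma) and
\begin{equation*}
F(a) = 2na, \qquad F(b) = nb.
\end{equation*}
Dividing these gives the key identity
\begin{equation*}
\frac{F(a)}{F(b)} = \frac{2a}{b}, \qquad\text{i.e.,}\qquad \log\frac{F(a)}{F(b)} = \log 2 - \log\frac{b}{a}.
\end{equation*}

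Next, I would estimate $\log(F(a)/F(b))$ directly using $(F_4)$. Since $F$ is positive and decreasing on $(0,A]$, we may write
\begin{equation*}
\log\frac{F(a)}{F(b)} = -\int_a^b (\log F)'(t)\, dt = \int_a^b \frac{|F'(t)|}{F(t)}\, dt,
\end{equation*}
and the bounds $A_2 \le |F'(t)|\, t/F(t) \le A_1$ from $(F_4)$ immediately yield
\begin{equation*}
A_2 \log\frac{b}{a} \le \log\frac{F(a)}{F(b)} \le A_1 \log\frac{b}{a}.
\end{equation*}

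Combining these with the identity above and writing $r := \log(b/a) > 0$, I would obtain the two-sided inequality $A_2\, r \le \log 2 - r \le A_1\, r$, equivalently
\begin{equation*}
\frac{\log 2}{A_1 + 1} \le \log\frac{b}{a} \le \frac{\log 2}{A_2 + 1}.
\end{equation*}
Exponentiating and using $A_2 > 0$, $A_1 < \infty$ gives $2^{1/(A_1+1)} \le b/a \le 2^{1/(A_2+1)}$, both of which lie strictly between $1$ and $2$. Taking $\epsilon = \min\bigl(2^{1/(A_1+1)} - 1,\; 2 - 2^{1/(A_2+1)}\bigr) > 0$ then yields the desired conclusion.

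I do not expect a serious obstacle here: the whole argument is a direct manipulation of $(F_4)$ once the identity $F(a)/F(b) = 2a/b$ is written down. The only mildly delicate point is recognizing that one needs \emph{both} sides of $(F_4)$, since the upper bound $A_1$ is what forces $b/a$ to stay away from $1$ (strict improvement over monotonicity), while the lower bound $A_2$ is what forces $b/a$ to stay away from $2$ (strict improvement over the trivial estimate $x_1(2n) < x_1(n) < 2x_1(2n)$).
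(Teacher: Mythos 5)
Your proof is correct. The only cosmetic gap is the very last step: setting $\epsilon = \min\bigl(2^{1/(A_1+1)} - 1,\; 2 - 2^{1/(A_2+1)}\bigr)$ gives the nonstrict inequalities $\le$, whereas the lemma asserts $<$; halving $\epsilon$ (as the paper itself does) fixes this trivially.

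Your route differs in a mild but pleasant way from the paper's. Both start from $\int_a^b -F'(t)\,dt$ over $[a,b]=[x_1(2n),x_1(n)]$ and both apply $(F_4)$; the divergence is in what is integrated. The paper integrates $-F'$ directly, gets $F(a)-F(b)=2na-nb$ on the left side, bounds $-F'(t)\le A_1 F(t)/t$ (resp.\ $\ge A_2 F(t)/t$), and then needs the additional observation that $F(t)/t$ is monotone decreasing so its integral can be controlled by the endpoint values $F(a)/a=2n$ and $F(b)/b=n$; this gives two linear inequalities in $a,b$ that must be rearranged. You instead integrate $-F'/F=(\text{-}\log F)'$, so the left side becomes the exact quantity $\log\bigl(F(a)/F(b)\bigr)=\log 2-\log(b/a)$ (using the identity $F(a)/F(b)=2a/b$), and $(F_4)$ immediately sandwiches it between $A_2\log(b/a)$ and $A_1\log(b/a)$ with no endpoint estimate of $F(t)/t$ needed. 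The payoff is a cleaner, exactly solvable pair of scalar inequalities for $r=\log(b/a)$, yielding the explicit bounds $2^{1/(A_1+1)}\le b/a\le 2^{1/(A_2+1)}$; the paper's constants $(2+2A_1)/(1+2A_1)$ and $(2+A_2)/(1+A_2)$ arise from the same information, just organized multiplicatively rather than logarithmically. Both arguments are equally elementary; yours slightly streamlines the bookkeeping.
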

\begin{proof}
Note that both $x_0(n)$ and $x_1(n)$ are monotonically decreasing to zero.
First, we show that
$$
(1+\epsilon)x_1(2n)<x_1(n).
$$
By definition of $x_1(n)$ and $(F4)$ we have
\begin{align*}
&2nx_1(2n)-nx_1(n)=F(x_1(2n))-F(x_1(n))=\int_{x_1(2n)}^{x_1(n)}-F'(t)dt
\\
&\le
A_1\int_{x_1(2n)}^{x_1(n)}\frac{F(t)}tdt\le
A_1(x_1(n)-x_1(2n))\frac{F(x_1(2n))}{x_1(2n)}=2nA_1(x_1(n)-x_1(2n)).
\end{align*}
Hence,
\begin{equation}
\label{a1}(2+2A_1)x_1(2n)\le x_1(n)(2A_1+1).
\end{equation}
Similarly,
\begin{eqnarray*}
2nx_1(2n)-nx_1(n) %&=&\int_{x_1(2n)}^{x_1(n)}-F'(t)dt
&\ge&
A_2\int_{x_1(2n)}^{x_1(n)}\frac{F(t)}tdt
\\&\ge&
A_2(x_1(n)-x_1(2n))\frac{F(x_1(n))}{x_1(n)}=nA_2(x_1(n)-x_1(2n)),
\end{eqnarray*}
which gives
\begin{equation}
\label{a2}(1+A_2)x_1(n)\le(2+A_2)x_1(2n).
\end{equation}
Finally, by~\eqref{a1} and~\eqref{a2} we take
$$
\epsilon=\frac 12\min{\left\{\frac
1{1+2A_1},\frac{A_2}{1+A_2}\right\}}.
$$
\end{proof}
\begin{corollary} \label{Cor4.1}
For each $C>0$ there exists $K=K(C,A,A_1,A_2)$ such that
\begin{equation}
\label{100} Cx_1(n)<Kx_1(Kn)
\end{equation}
for all $n$ large enough.
\end{corollary}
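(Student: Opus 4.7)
The plan is to iterate Lemma~\ref{Lemma3}. The lower bound $(1+\epsilon)x_1(2n) < x_1(n)$, rewritten as
$$
x_1(2n) > \frac{1}{2-\epsilon}\, x_1(n),
$$
follows from the upper half $x_1(n) < (2-\epsilon) x_1(2n)$ of that lemma (the relevant inequality here). Applying it $k$ times, we get $x_1(2^k n) > (2-\epsilon)^{-k} x_1(n)$, and multiplying both sides by $2^k$ yields
$$
2^k\, x_1(2^k n) > \left(\frac{2}{2-\epsilon}\right)^{\!k} x_1(n).
$$
Setting $\rho := 2/(2-\epsilon) > 1$, the right-hand side is $\rho^k x_1(n)$.

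Given $C>0$, I would choose $k$ to be the smallest positive integer with $\rho^k > C$, and then define $K := 2^k$. This choice depends only on $C$, $\epsilon$, and (through $\epsilon$) the constants $A, A_1, A_2$, as required. With this $K$, the displayed inequality above becomes exactly $K x_1(Kn) > C x_1(n)$, which is \eqref{100}.

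The only subtlety is the "$n$ large enough" clause: Lemma~\ref{Lemma3} requires its input to exceed some threshold $n_0$. In the iteration we apply the lemma successively at $n, 2n, \dots, 2^{k-1}n$, so we need all of these to exceed $n_0$. Since $k$ is fixed once $C$ is fixed, it suffices to demand $n \ge n_0$, because then $2^j n \ge n \ge n_0$ for every $0 \le j \le k-1$. No genuine obstacle arises here, which is why this is stated as a corollary rather than a separate lemma.
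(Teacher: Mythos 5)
Your argument is correct and follows exactly the paper's route: both use the right-hand estimate of Lemma~\ref{Lemma3} (in the paper's phrasing, $2x_1(2n)>(1+\delta)x_1(n)$, which is the same as your $\rho=\tfrac{2}{2-\epsilon}>1$), iterate it, and take $K=2^k$ with $k$ chosen so that $\rho^k>C$. The only difference is cosmetic (your $\rho$ versus the paper's $1+\delta$), and your remark about the ``$n$ large enough'' clause makes explicit a step the paper leaves tacit.
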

\begin{proof}
By Lemma~\ref{Lemma3} (the right-hand side estimate) there exists a positive constant $\delta=\delta(A,A_1,A_2)$ such that
 $2x_1(2n)>(1+\delta) x_1(n)$. Take an integer $m$ such that $(1+\delta)^m>C$.
 Then %This inequality immediately implies that
$$
2^mx_1(2^mn)>(1+\delta)^mx_1(n)>Cx_1(n),
$$
which is~\eqref{100} with $K=2^m$.
\end{proof}
Similarly, using Lemma~\ref{Lemma3} (the left-hand side estimate), we get
\begin{corollary}
\label{Cor4.1.5} For each $L>0$ there exists $Q=Q(L,A,A_1,A_2)$ such
that
\begin{equation}
\label{100.5} x_1(Qn)<\frac{x_1(n)}{L}
\end{equation}
for all $n$ large enough.
\end{corollary}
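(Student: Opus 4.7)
The plan is to mirror the proof of Corollary~\ref{Cor4.1}, but using the left-hand inequality of Lemma~\ref{Lemma3} instead of the right-hand one. The left inequality $(1+\epsilon)x_1(2n) < x_1(n)$ can be rewritten as $x_1(2n) < \rho\, x_1(n)$, where $\rho := 1/(1+\epsilon) \in (0,1)$ depends only on $A_1$ and $A_2$ (through $\epsilon = \epsilon(A,A_1,A_2)$).

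The key step is then a straightforward iteration. Applying the inequality $m$ times in succession, I get
$$
x_1(2^m n) < \rho\, x_1(2^{m-1} n) < \rho^2\, x_1(2^{m-2} n) < \cdots < \rho^m\, x_1(n),
$$
which is valid for all $n$ large enough so that each of the $m$ intermediate applications of Lemma~\ref{Lemma3} is justified (this requires $n \ge n_0(m)$, but $m$ will be chosen depending only on $L$, so this is harmless).

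Given $L > 0$, I choose the integer $m$ so that $\rho^m < 1/L$, which is possible since $\rho < 1$; explicitly, $m > \log L / \log(1+\epsilon)$. Setting $Q := 2^m$, I obtain
$$
x_1(Qn) = x_1(2^m n) < \rho^m\, x_1(n) < \frac{x_1(n)}{L}
$$
for all sufficiently large $n$, which is exactly~\eqref{100.5}. There is no real obstacle here: Lemma~\ref{Lemma3} has already done all the substantive work by providing a geometric contraction, and Corollary~\ref{Cor4.1.5} is merely its iteration, completely parallel to how Corollary~\ref{Cor4.1} iterates the other side of Lemma~\ref{Lemma3}.
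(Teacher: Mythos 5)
Your proof is correct and is exactly what the paper's ``Similarly, using Lemma~\ref{Lemma3} (the left-hand side estimate)'' abbreviates: iterate $(1+\epsilon)x_1(2n) < x_1(n)$ to get $x_1(2^m n) < (1+\epsilon)^{-m} x_1(n)$ and choose $m$ with $(1+\epsilon)^m > L$, setting $Q = 2^m$. No discrepancy.
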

\begin{corollary}
\label{Cor4.2} For each $K>0$ there exists $L=L(K,A,A_1,A_2)$ such
that
\begin{equation}
\label{100.75} F\Big(\frac{x_1(n)}L\Big)>Kx_1(n)n
\end{equation}
for all $n$ large enough.
\end{corollary}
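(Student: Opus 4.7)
The key observation is that by the definition of $x_1(n)$, we have $F(x_1(n))=n x_1(n)$, so the desired inequality
$$F\bigl(x_1(n)/L\bigr) > K\, x_1(n)\, n$$
is equivalent to
$$F\bigl(x_1(n)/L\bigr) > K\, F(x_1(n)).$$
So the whole problem reduces to controlling how fast $F$ grows when its argument is divided by a constant. This is exactly the kind of estimate that the lower bound in $(F4)$ is designed to provide.

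My plan is to integrate the logarithmic derivative of $F$. From $(F4)$, for $x\in(0,A]$,
$$-\frac{F'(x)}{F(x)} = \frac{|F'(x)|}{F(x)} \ge \frac{A_2}{x}.$$
For $n$ large enough, $x_1(n)\le A$ and $x_1(n)/L\le A$ (both hold since $x_1(n)\searrow 0$). Then
$$\log F\!\left(\frac{x_1(n)}{L}\right) - \log F(x_1(n)) = \int_{x_1(n)/L}^{x_1(n)} \left(-\frac{F'(t)}{F(t)}\right) dt \ge A_2 \int_{x_1(n)/L}^{x_1(n)} \frac{dt}{t} = A_2 \log L.$$
Exponentiating gives $F(x_1(n)/L) \ge L^{A_2} F(x_1(n)) = L^{A_2}\, n\, x_1(n)$.

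Hence it suffices to choose $L$ so that $L^{A_2} > K$, i.e., any $L > K^{1/A_2}$ works, and this $L$ depends only on $K$, $A_2$ (and implicitly on $A, A_1$ through the admissible range of $n$). There is no real obstacle here: the argument is just a one-line integration of the Dini-type bound $(F4)$ from $x_1(n)/L$ up to $x_1(n)$. The only mild subtlety is making sure $x_1(n)/L$ lies in $(0,A]$, which is automatic for all sufficiently large $n$ since $x_1(n)\to 0$.
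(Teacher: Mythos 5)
Your proof is correct, and it takes a genuinely different (and more direct) route than the paper. The paper proves Corollary~\ref{Cor4.2} by invoking Corollary~\ref{Cor4.1}: it finds $L$ with $Lnx_1(Ln)>Knx_1(n)$, uses monotonicity of $F$ and $x_1$ to get $x_1(n)/L\le x_1(Ln)$, and then writes $F(x_1(n)/L)\ge F(x_1(Ln))=Lnx_1(Ln)>Knx_1(n)$. Corollary~\ref{Cor4.1} in turn rests on Lemma~\ref{Lemma3}, the two-sided comparison between $x_1(n)$ and $x_1(2n)$, so the paper's route travels through a small chain of auxiliary results. Your argument sidesteps that chain entirely: you integrate the lower bound $-F'(t)/F(t)\ge A_2/t$ from $(F4)$ over $[x_1(n)/L,\,x_1(n)]$ to get $F(x_1(n)/L)\ge L^{A_2}F(x_1(n))=L^{A_2}\,n\,x_1(n)$, and then pick $L>K^{1/A_2}$. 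This is the same technique used to prove Lemma~\ref{Lemma1}, applied between the two points of interest rather than against the fixed endpoint $A$. Both arguments are valid; yours is more self-contained for this particular statement, while the paper's version makes the corollary a one-line consequence of Corollary~\ref{Cor4.1}, which the paper needs elsewhere anyway. The only small caveat, which you already noted, is that one must have $x_1(n)\le A$ (automatic for $n\ge F(A)/A$) so that $(F4)$ applies on the whole interval of integration.
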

\begin{proof}
First, by~\eqref{100}
there exists $L=L(K,A,A_1,A_2)$
such that
$
Lnx_1(Ln)>Knx_1(n).
$
Second, on account of monotonicity of $F$,
$$
\frac{x_1(n)}L\le x_1(Ln),\quad L\ge 1.
$$
Therefore,
$$
F\left(\frac{x_1(n)}L\right)\ge F(x_1(Ln))=Lnx_1(Ln)>Knx_1(n).
$$
\end{proof}

\vspace{6mm}

\section{Approximation of $\Omega$-functions}% from $\Omega$ class}
\label{section-approximation}
The aim of this section is to obtain an order of approximation of functions from the class $\Omega$ by trigonometric polynomials.

\subsection{Estimates for the Fourier coefficients of $\omega\in \Omega$}
%To achieve this goal
 We use the classical estimate for the $n$-th Fourier coefficient of $\omega:$
\begin{equation}
\label{mumumu}
\big|\hat{\omega}_n\big|=\Big|\frac{1}{\pi}\int_{\T}{\omega}(t) \cos nt \,dt\Big|
\le2\inf_{k\ge 0}\frac{\|\omega^{(k)}\|_{C(\T)}}{n^k},\quad
n\ge 1.
\end{equation}
Below we obtain a uniform upper bound of the $n$-th derivative of the function $\omega\in \Omega$, where $\omega(t)=H(g(t))$, $H(x)=\exp{(-F(x))}$. To this end,
we use  Fa\`a di Bruno's formula
\begin{equation}
\label{101}
\big(u(v(x))\big)^{(k)}=\sum{\frac{k!}{m_1!\ldots
m_k!}}u^{(m_1+\ldots+m_k)}(v(x))\left(\frac{v'(x)}{1!}\right)^{m_1}
\ldots\left(\frac{v^{(k)}(x)}{k!}\right)^{m_k},
\end{equation}
where summation is taken over all nonnegative integers such that
$m_1+\ldots+km_k=k$.
 We start with the following technical lemma.
\begin{lemma} \label{Lemma0}
For each $k\in\N$ the following identity holds:
\begin{equation}\label{c1}
\sum_{m_1+2m_2+\ldots+km_k=k}\frac{k!}{m_1!m_2!\ldots m_k!(k-m_1-\ldots-m_k)!} = \frac 12 {{2k}\choose{k}}.
\end{equation}
\end{lemma}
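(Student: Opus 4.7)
My plan is to interpret both sides of~\eqref{c1} as evaluations of the same composite function, read off through Fa\`a di Bruno's formula~\eqref{101}. Concretely, I would apply~\eqref{101} to $(u\circ v)(x)$ with the engineered choices
\[
u(y)=(1+y)^{k},\qquad v(x)=\frac{x}{1-x}=\sum_{i\ge 1}x^{i},
\]
designed so that each factor in the Fa\`a di Bruno sum collapses. Indeed, $v(0)=0$ and $v^{(i)}(0)/i!=1$ for every $i\ge 1$, so each product $\prod_{i}\bigl(v^{(i)}(0)/i!\bigr)^{m_i}$ equals $1$, while $u^{(j)}(0)=k!/(k-j)!$ for $0\le j\le k$ and vanishes for $j>k$.

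Setting $j:=m_1+\dots+m_k$ and noting $j\le m_1+2m_2+\dots+km_k=k$ automatically, the substitution into~\eqref{101} at $x=0$ reduces its right-hand side to
\[
k!\sum_{m_1+2m_2+\dots+km_k=k}\frac{k!}{m_1!\cdots m_k!\,(k-j)!},
\]
i.e.\ to $k!$ times the left-hand side of~\eqref{c1}. On the other hand, $u(v(x))=(1-x)^{-k}$, whose $k$-th derivative at $0$ equals
\[
k!\,[x^{k}](1-x)^{-k}=k!\binom{2k-1}{k}.
\]
Comparing the two expressions and using the elementary identity $\binom{2k-1}{k}=\tfrac12\binom{2k}{k}$ yields~\eqref{c1}.

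The only step that requires a line of care is the bookkeeping observation that $u^{(j)}(0)=0$ for $j>k$ together with the automatic bound $j\le k$ on admissible multi-indices, so no unwanted terms appear. Everything else is routine, and the real content of the argument is simply cooking up $u$ and $v$ so that the awkward factor $1/(k-j)!$ emerges in isolation.
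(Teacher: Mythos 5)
Your proof is correct. The paper's own argument is shorter and more direct: it notes that the left side of~\eqref{c1} is, by the multinomial theorem, the coefficient of $x^k$ in $(1+x+\cdots+x^k)^k$, which (since the degree cap is irrelevant for the $x^k$ coefficient) equals the coefficient of $x^k$ in $(1-x)^{-k}$, namely $\binom{2k-1}{k}=\tfrac12\binom{2k}{k}$. Your route reaches the same kernel $(1-x)^{-k}$ but by running Fa\`a di Bruno's formula~\eqref{101} backwards: you engineer $u(y)=(1+y)^k$ and $v(x)=x/(1-x)$ so that the partial Bell structure collapses (every $v^{(i)}(0)/i!=1$) and the factor $u^{(j)}(0)=k!/(k-j)!$ injects precisely the $1/(k-j)!$ missing from an ordinary multinomial sum, while $u\circ v=(1-x)^{-k}$ supplies the evaluation. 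Both proofs are elementary; the paper's is a one-line multinomial observation, whereas yours is a nice thematic match for the surrounding context, since Lemma~\ref{Lemma0} is invoked only to control the combinatorics arising from Fa\`a di Bruno in Lemmas~\ref{Lemma4.1}--\ref{Lemma4.2}, and you show the identity is itself a Fa\`a di Bruno computation. You correctly note the bookkeeping point that the constraint $m_1+2m_2+\cdots+km_k=k$ forces $j=m_1+\cdots+m_k\le k$, so $u^{(j)}(0)$ never vanishes spuriously and no terms are lost; that is the only place where care is needed, and you handled it.
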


\begin{proof}
Denote the left-hand side of (\ref{c1}) %the identity
 by $S_k$. One can see that
$S_k$ is  the coefficient of $x^k$ of the polynomial
$$
(1+x+x^2+\ldots+x^k)^k,
$$
and hence, it is equal to the coefficient of $x^k$ in the Taylor series expansion of
the function
$$
f(x)=\frac 1{(1-x)^k}.
$$
Therefore,
$$
S_k=\frac{f^{(k)}(0)}{k!}=\frac 12 {{2k}\choose{k}}.
$$
\end{proof}
Now we are ready to estimate the maximum norm of the $k$-th derivative of the function $H$.
\begin{lemma}
\label{Lemma4.1} Let $H(x)=\exp{(-F(x))}$, where $F$  satisfies
$(F1)-(F4)$. Then $H\in C^{\infty}[-A,A]$, and there exists
$C=C(A,B,A_1,A_2)>0$ such that for all $k> F(A)$
$$
H^{(k)}(x)\le\left( \frac{Ck}{x_0(k)}\right)^k,\quad x\in[-A,A].
$$
\end{lemma}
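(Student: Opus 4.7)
The plan is to start by applying Fa\`a di Bruno's formula~\eqref{101} to the composition $H = u \circ F$ with $u(y) = e^{-y}$, giving an expression for $H^{(k)}(x)$ as a sum indexed by tuples $(m_1,\ldots,m_k)$ satisfying $\sum j m_j = k$. Since $|u^{(M)}(y)| = e^{-y}$, taking absolute values factors out $e^{-F(x)}$ and reduces the problem to estimating a pure combinatorial sum.

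Using hypothesis $(F3)$ together with the elementary inequality $j^j/j! \le e^j$, I would bound $|F^{(j)}(x)|/j! \le (Be)^j F(x)/x^j$. Because $\sum j m_j = k$, the product in each Fa\`a di Bruno summand simplifies to $(Be)^k F(x)^M/x^k$ with $M := \sum m_j$. Factoring $k!/(m_1!\cdots m_k!) = (k-M)! \cdot k!/(m_1!\cdots m_k!(k-M)!)$ and applying Lemma~\ref{Lemma0}, together with $\binom{2k}{k}\le 4^k$, collapses the combinatorial sum and yields
\begin{equation*}
|H^{(k)}(x)| \le e^{-F(x)}\,\frac{(4Be)^k}{x^k}\,\max_{1\le M\le k}\bigl[(k-M)!\,F(x)^M\bigr].
\end{equation*}

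The heart of the proof is then to absorb the right-hand side into $(Ck/x_0(k))^k$ via a case split. When $F(x) < k$ (equivalently, $x > x_0(k)$), the crude estimates $(k-M)! \le k^{k-M}$ and $F(x)^M \le k^M$ give $(k-M)!F(x)^M \le k^k$, so $|H^{(k)}(x)| \le (4Bek/x)^k \le (4Bek/x_0(k))^k$. When $F(x) \ge k$ (equivalently, $x \le x_0(k)$), the ratio $F(x)/(k-M) \ge 1$ shows that $(k-M)!F(x)^M$ is nondecreasing in $M$, so its maximum equals $F(x)^k$. Setting $\alpha = F(x)/k \ge 1$ and $u = x_0(k)/x \ge 1$, the relation $u^{A_2} \le \alpha$---obtained by integrating $|F'|/F \ge A_2/t$ from $x$ to $x_0(k)$, exactly as in the proof of Lemma~\ref{Lemma1}---implies
\begin{equation*}
\frac{4Be\,F(x)}{x}\,e^{-F(x)/k} = \frac{4Bek}{x_0(k)}\cdot\alpha u\,e^{-\alpha} \le \frac{4Bek}{x_0(k)}\cdot\alpha^{1+1/A_2}e^{-\alpha}.
\end{equation*}
Since $\alpha^{1+1/A_2}e^{-\alpha}$ is uniformly bounded for $\alpha\ge 1$, this gives the claimed estimate, with $C$ depending on $A_2$ and $B$; evenness of $H$ handles $x\in[-A,0)$.

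Finally, $H\in C^\infty[-A,A]$ at $x=0$ follows by induction on $k$: the derivative bound combined with $F(x)\ge cx^{-A_2}$ from Lemma~\ref{Lemma1} shows $|H^{(k)}(x)|\to 0$ as $x\to 0$ faster than any power of $x$, so $H^{(k)}(0)=0$ and $H^{(k)}$ is continuous at the origin. The main obstacle is the second case of the split ($F(x)\ge k$), where the blow-up of $1/x^k$ as $x\to 0$ must be precisely balanced against the rapid decay of $e^{-F(x)}$; this is exactly where hypothesis $(F4)$ is used decisively, through the sharp comparison $u^{A_2}\le\alpha$ between the growth rates of $F$ and $1/x$ near the singularity.
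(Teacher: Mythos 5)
Your proposal is correct and takes a genuinely different route from the paper. The paper also begins with Fa\`a di Bruno and the bound $|F^{(s)}(x)|/s! \le C^s F(x)/x^s$, but then treats each term $G_{m,k}(x):=H(x)F(x)^m/x^k$ \emph{separately}: it computes $G'_{m,k}$, shows via $(F4)$ that $G'_{m,k}<0$ when $F(x)<k/A_1$ and $G'_{m,k}>0$ when $F(x)>\max\{2,2/A_2\}k$, concludes that the maximum of $G_{m,k}$ is attained at a point $x^*$ with $F(x^*)\asymp k$ (so $x^*\asymp x_0(k)$ by Lemma~\ref{Lemma2}), and only afterwards sums over the tuples using Lemma~\ref{Lemma0} together with $k^{k-m}\ge (k-m)!$. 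You instead extract the combinatorial sum first (via the factorization $k!/(m_1!\cdots m_k!)=(k-M)!\cdot k!/(m_1!\cdots m_k!(k-M)!)$ and Lemma~\ref{Lemma0}), reducing the problem to bounding $e^{-F(x)}x^{-k}\max_M[(k-M)!F(x)^M]$, and then dispense with the calculus entirely: the monotonicity of $(k-M)!F(x)^M$ in $M$ when $F(x)\ge k$ identifies the maximizer $M=k$, and the key balance between $e^{-F(x)}$ and $x^{-k}$ is handled by the purely algebraic inequality $u^{A_2}\le\alpha$ (integration of $(F4)$, as in Lemma~\ref{Lemma1}) plus boundedness of $\alpha^{1+1/A_2}e^{-\alpha}$. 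The trade: the paper's critical-point analysis automatically localizes the maximum and uses Lemma~\ref{Lemma2} to change $x_0(Rk)$ into $x_0(k)$, whereas your argument avoids both the derivative computation and the appeal to Lemma~\ref{Lemma2}, at the price of one explicit case split. Both proofs also establish $H\in C^\infty$ in essentially the same way: your bound $e^{-F(x)}F(x)^k/x^k\to 0$ as $x\to 0+$ (via Lemma~\ref{Lemma1}) is the same observation the paper makes about $G_{m,k}(x)\to 0$, although your mention of ``induction on $k$'' to pass from $H^{(k)}(x)\to 0$ to differentiability of $H^{(k-1)}$ at $0$ is the right scaffolding and is left implicit in the paper.
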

\begin{proof}
Consider $x\in(0,A]$. By~\eqref{101},
$$
H^{(k)}(x)=\sum_{m_1+\ldots+km_k=k}{\frac{k!}{m_1!\ldots
m_k!}}(-1)^{m_1+\ldots+m_k}\exp{(-F(x))}\left(\frac{F'(x)}{1!}\right)^{m_1}
\ldots\left(\frac{F^{(k)}(x)}{k!}\right)^{m_k}.
$$
By $(F3)$ we have
$$
\frac{|F^{(s)}(x)|}{s!}\le C^s\frac{F(x)}{x^s}, \qquad 1\le s\le k.
$$
Hence,
\begin{eqnarray}
\nonumber|H^{(k)}(x)|&\le& C^k\sum_{m_1+\ldots+km_k=k}{\frac{k!}{m_1!\ldots
m_k!}}\frac{H(x)(F(x))^{m_1+\ldots+m_k}}{x^k}
\\
\label{102} &=&
C^k\sum_{m_1+\ldots+km_k=k}{\frac{k!}{m_1!\ldots
m_k!}}G_{m,k}(x),
\end{eqnarray}
where $m=m_1+\ldots+m_k$, and
$$
G_{m,k}(x):=\frac{H(x)(F(x))^m}{x^k}, \quad x\in(0,A].
$$
To estimate the maximum of $G_{m,k}(x)$ for $x\in(0,A)$, we write
$$
G'_{m,k}(x)=\frac{H(x)(F(x))^{m-1}}{x^k}\left(
-F'(x)F(x)+mF'(x)-\frac kxF(x)\right).
$$
Therefore, if $F(x)<k/A_1$, then $G'_{m,k}<0$.
Indeed,  by $(F4)$, we
get
$$
-F'(x)F(x)+mF'(x)-\frac kxF(x)<F(x)\left(-F'(x)-\frac
kx\right)<F(x)\left(\frac{A_1F(x)}x-\frac kx\right)<0.
$$
Similarly, if $F(x)>\max\{2,2/A_2\}k $, then $G'_{m,k}>0$. In this case
 $F(x)>2k\ge 2m$ and therefore,
\begin{align*}
&-F'(x)F(x)+mF'(x)-\frac kxF(x)\ge
-\frac{F'(x)F(x)}2-\frac kxF(x)
\\
&=\frac{F(x)}2\left(-F'(x)-\frac{2k}x\right)
\ge
\frac{F(x)}2\left(\frac{A_2F(x)}x-\frac{2k}x\right)
>0.
\end{align*}
Using the fact that each $G_{m,k}$ is a continuously differentiable
function on $(0,A]$, we get that $\max_{0<x\le A}{G_{m,k}(x)}$ exists
for all $1\le m\le k$ %, where $k>N_0$ for some $N_0=N_0(A,A_1,A_2)$,
 and is attained at a point $x^*$ such that
\begin{equation}
\label{a3} \frac k{A_1}\le F(x^*)\le\max\{2,2/A_2\}k.
\end{equation}
Now, Lemma~\ref{Lemma1} implies that
$$
G_{m,k}(x)\to 0\qquad\mbox{as}\qquad %\frac{G_{m,k}(x)}x\to 0,\qquad
 x\to 0+.
$$
Then, it follows from (\ref{102}) that $H^{(k)}(0)=0$ for all $k\in\N$, and hence $H\in C^{\infty}[-A,A]$.
Set $R=\max\{2,2/A_2\}$.
Since $F$ is decreasing, then we get by~\eqref{a3} that
$$
G_{m,k}(x)\le\exp{(-F(A))}\frac{(Rk)^m}{x_0^k(Rk)}\le\frac{C^kk^m}{x_0^k(k)},\qquad k>F(A).
$$
Here the last inequality follows from Lemma~\ref{Lemma2}.
Combining the latter with~\eqref{102} we obtain that
$$
|H^{(k)}(x)|\le\frac{
C^k k!}{x_0^k(k)}\sum_{m_1+\ldots+km_k=k}\frac{k^{m_1+\ldots+m_k}}{m_1!\ldots
m_k!}.
$$
Finally, taking into account that $k^{k-m}\ge (k-m)!$ for $1\le m\le
k$, we get by~\eqref{c1}
$$|H^{(k)}(x)|\le\frac{C^k k!}{x_0^k(k)}\sum_{m_1+\ldots+km_k=k}\frac{k!}{m_1!\ldots
m_k!(k-m_1-\ldots-m_k)!}\le\frac{C^k k!}{x_0^k(k)},\quad x\in(0,A].
$$
For $x\in[-A,0)$ the same inequality holds because $F$ and hence $H$ are even.
\end{proof}

We are now in a position to give the uniform estimate of
$\omega^{(k)}$, where $\omega\in \Omega$.
\begin{lemma}
\label{Lemma4.2} Let $\omega\in \Omega$, then there exists
$C=C(\omega)>0$ such that for all $k$ large enough
$$
\omega^{(k)}(t)\le\frac{C^k k^k}{x_0^k(k)},\quad t\in\mathbb{T}.
$$
\end{lemma}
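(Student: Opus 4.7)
The plan is to apply Fa\`a di Bruno's formula~\eqref{101} to the composition $\omega=H\circ g$. Writing $m=m_1+\cdots+m_k$, we have
$$
\omega^{(k)}(t)=\sum_{m_1+2m_2+\cdots+km_k=k}\frac{k!}{m_1!\cdots m_k!}\,H^{(m)}(g(t))\prod_{j=1}^{k}\left(\frac{g^{(j)}(t)}{j!}\right)^{m_j}.
$$
The analyticity hypothesis~\eqref{g} gives $|g^{(j)}(t)/j!|\le D^j$, so $\prod_j|g^{(j)}(t)/j!|^{m_j}\le D^{\sum jm_j}=D^k$. For the factor $|H^{(m)}(g(t))|$, Lemma~\ref{Lemma4.1} (combined with Stirling's estimate $m^m\le e^m m!$) yields $|H^{(m)}(g(t))|\le C^m m!/x_0^m(m)$ for all relevant $m$, the finitely many values $m\le F(A)$ being handled by a trivial constant bound absorbed into $C$. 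Since $x_0$ is decreasing and $m\le k$, we have $x_0(m)\ge x_0(k)$, so
$$
|\omega^{(k)}(t)|\le D^k\sum\frac{k!\,m!}{m_1!\cdots m_k!}\cdot\frac{C^m}{x_0^m(k)}.
$$

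Setting $a:=C/x_0(k)$, we have $a\ge 1$ for $k$ sufficiently large (because $x_0(k)\to 0$), and hence $a^m\le a^k$. Factoring out the $k$-dependent quantities gives
$$
|\omega^{(k)}(t)|\le (Da)^k\,k!\sum_{m_1+2m_2+\cdots+km_k=k}\frac{m!}{m_1!\cdots m_k!}.
$$
Each summand $m!/(m_1!\cdots m_k!)$ is the multinomial coefficient counting compositions of $k$ with exactly $m_j$ parts of size $j$; summing over all shapes produces the total number of compositions of $k$, which is $2^{k-1}$. Combining this with the elementary inequality $k!\le k^k$ yields $|\omega^{(k)}(t)|\le (2CD\,k/x_0(k))^k$, which is precisely the desired bound after renaming constants.

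The main difficulty is avoiding a spurious factor of $k^k$: the naive estimate $(Cm/x_0(m))^m\le (Ck/x_0(k))^k$ is far too wasteful. One must instead keep the $m!$ arising from Lemma~\ref{Lemma4.1} intact (this is where Stirling enters), pass the factor $x_0(k)^{-m}$ through the sum via $a^m\le a^k$ rather than bounding term by term, and then apply the sharp combinatorial identity $\sum m!/(m_1!\cdots m_k!)=2^{k-1}$ for compositions of $k$ to control the remaining multinomial sum.
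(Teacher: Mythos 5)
Your proof is correct and follows the same overall skeleton as the paper's (Fa\`a di Bruno for $\omega=H\circ g$, the bound $|g^{(j)}/j!|\le D^j$ yielding a clean factor $D^k$, and Lemma~\ref{Lemma4.1} for $H^{(m)}$). Where you diverge is in the combinatorial bookkeeping, and your route is genuinely different and in fact tighter. The paper keeps the $m^m$ from Lemma~\ref{Lemma4.1} intact, estimates $m^m\le k^{k-m}\,m^m/(k-m)!\cdot(k-m)!\le C^k k!/(k-m)!$, and then invokes its Lemma~\ref{Lemma0} to evaluate $\sum k!/\bigl(m_1!\cdots m_k!\,(k-m)!\bigr)=\tfrac12\binom{2k}{k}$, i.e.\ the coefficient of $x^k$ in $(1+x+\cdots+x^k)^k$. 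You instead pass through Stirling ($m^m\le e^m m!$) so that the multinomial weight $k!/(m_1!\cdots m_k!)$ pairs with a clean $m!$, and then close with the composition count $\sum_{m_1+2m_2+\cdots+km_k=k} m!/(m_1!\cdots m_k!)=2^{k-1}$. This spares you the generating-function lemma entirely, and $2^{k-1}$ is a sharper bound than $\tfrac12\binom{2k}{k}\sim 4^k/\sqrt{\pi k}$, though of course both are harmlessly absorbed into $C^k$.

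One small imprecision worth cleaning up: the clause "we have $x_0(m)\ge x_0(k)$" cannot apply to the range $m<F(A)$, since $x_0(m)$ is simply undefined there. What you actually want (and what your parenthetical about absorbing the constant is gesturing at) is the direct estimate $|H^{(m)}|\le C(\omega)\le C^m m!\le C^m m!/x_0^m(k)$ for the finitely many $m<F(A)$, the last inequality holding once $k$ is large enough that $x_0(k)\le 1$. With that one-line fix the argument is airtight.
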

\begin{proof}
Take $k\ge F(A)$ so that $x_0(k)$ is well defined.
Since $g$ is an analytic function on $\mathbb{T}$, and $H\in C^{\infty}[-A,A]$, then $\omega\in C^{\infty}(\mathbb{T})$.
By Fa\`a di Bruno's formula, it follows that, for each $k\in\N$,
$$
|\omega^{(k)}(t)|=|(H(g(t)))^{(k)}|=\sum_{m_1+\ldots+km_k=k}
{\frac{k!}{m_1!\ldots
m_k!}}H^{(m)}(g(t))\left(\frac{g'(t)}{1!}\right)^{m_1}
\ldots\left(\frac{g^{(k)}(t)}{k!}\right)^{m_k},
$$
where $m=m_1+\ldots+m_k$.
We rewrite the last sum as $\sum_{m<F(A)} + \sum_{m\ge F(A)}$.
Since  $H^{(m)}(x)\le C(\omega)$ for any  $m< F(A)$,
we have
\begin{equation}
\label{g-vspom-1}
\sum_{{m<F(A)}}
\le
 C(\omega) D^k
\sum_{{m<F(A)}}
{\frac{k!}{m_1!\ldots m_k!}}
\le
 C^k k!
\sum_{{m<F(A)}}
{\frac{1}{m_1!\ldots m_k!}}.
\end{equation}
To estimate $\sum_{m\ge F(A)}$, we can use %~\eqref{g-vspom}
\begin{equation}
\label{g-vspom}
H^{(m)}(x)\le\left( \frac{Cm}{x_0(m)}\right)^m,\quad m\ge F(A),
\end{equation}
provided by Lemma~\ref{Lemma4.1}
and~\eqref{g} to get
\begin{equation*}
\sum_{m\ge F(A)}\le
 D^k \sum_{m_1+\ldots+km_k=k}{\frac{k!}{m_1!\ldots
m_k!}}\left( \frac{C m}{x_0(m)}\right)^m.
\end{equation*}
Combining this with (\ref{g-vspom-1}), we get
$$
|\omega^{(k)}(t)|\le
\frac{C^k k!}{x_0^k(k)}\sum_{m_1+\ldots+km_k=k}{\frac{m^m}{m_1!\ldots
m_k!}},\quad t\in\mathbb{T}.
$$
Noting that for each integers $1\le m\le k$
$$
m^m\le\frac{k^k}{(k-m)!}\le\frac{C^kk!}{(k-m)!},
$$
we have by~\eqref{c1}
$$
|\omega^{(k)}(t)|\le\frac{C^k k!}{x_0^k(k)}
\sum_{m_1+\ldots+km_k=k}\frac{k!}{m_1!\ldots
m_k!(k-m_1-\ldots-m_k)!}\le\frac{C^k k!}{x_0^k(k)},\qquad
t\in\mathbb{T}.
$$
\end{proof}

%To estimate an order of approximation of $\omega$ by trigonometric
%polynomials we need the following lemma.

The next result provides a near optimal $k$ in  estimate (\ref{mumumu}) for the $n$-th Fourier coefficient of $\omega\in \Omega$.
\begin{lemma}
\label{Lemma4.3} Let $F$ be a function satisfying $(F1)-(F4)$. Then
for each $C>e$ and $n$ large enough there exists an integer
$k=k(C,n,F)$ such that
$$
\frac{C^kk^k}{n^kx_0^k(k)}\le\exp{\Big(-\frac{1}{C^2}nx_1(n)+1\Big)}.
$$
\end{lemma}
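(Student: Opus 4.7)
My strategy is to pick $k$ so that the ratio $Ck/(nx_0(k))$ is bounded by $C^{-(1+\delta)}$ for a suitable $\delta=\delta(A_1)>0$, which yields the geometric decay $(Ck/(nx_0(k)))^k\le C^{-(1+\delta)k}$, and then to show that the integer $k$ itself is at least $nx_1(n)\,C^{-\gamma}$ (up to an additive constant) for some $\gamma<2$. Matching these two bounds will produce the advertised rate of decay $\exp(-nx_1(n)/C^2+1)$. The central analytic input is the scale-invariant consequence of $(F4)$, obtained by integrating $d\log F/d\log x\in[-A_1,-A_2]$:
\[
\lambda^{-A_1}\le\frac{F(\lambda x_1(n))}{F(x_1(n))}\le\lambda^{-A_2},\qquad \lambda>1.
\]

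I fix $\delta:=1/A_1$, so that $\gamma:=(2+\delta)A_1/(1+A_1)=2-1/(1+A_1)<2$, and define $k=k(C,n,F)$ to be the largest positive integer $m$ with $nx_0(m)\ge C^{2+\delta}m$ (equivalently $F(C^{2+\delta}m/n)\ge m$). For $n$ large this integer exists and satisfies $k\to\infty$: the condition holds for $m$ slightly exceeding $F(A)$ because $x_0(\lceil F(A)\rceil)$ is a positive constant and $n\to\infty$, while it eventually fails since $x_0(m)\to 0$. By construction,
\[
\left(\frac{Ck}{nx_0(k)}\right)^k\le C^{-(1+\delta)k}.
\]

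For the lower bound on $k$, maximality gives $F(C^{2+\delta}(k+1)/n)<k+1$. Setting $\lambda:=C^{2+\delta}(k+1)/(nx_1(n))$ and observing that one may assume $\lambda>1$ (the contrary forces $k+1>nx_1(n)$, which is an even stronger bound than needed), the displayed dilation estimate gives $\lambda^{-A_1}nx_1(n)\le F(\lambda x_1(n))<k+1$. Substituting the expression for $\lambda$ and rearranging yields $(nx_1(n))^{1+A_1}<C^{(2+\delta)A_1}(k+1)^{1+A_1}$, i.e.\ $k+1>nx_1(n)\,C^{-\gamma}$. Combining with the preceding bound,
\[
\left(\frac{Ck}{nx_0(k)}\right)^k\le\exp\!\bigl(-(1+\delta)\log C\cdot(nx_1(n)C^{-\gamma}-1)\bigr),
\]
and to pass to $\exp(-nx_1(n)/C^2+1)$ it suffices that $nx_1(n)\bigl[(1+\delta)\log C\cdot C^{-\gamma}-C^{-2}\bigr]\ge(1+\delta)\log C-1$. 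The coefficient in brackets is strictly positive because $\gamma<2$ and $C>e$ together force $(1+\delta)\log C\cdot C^{2-\gamma}>1$, and since $nx_1(n)\to\infty$ by Lemma~\ref{Lemma2.5}, the inequality holds for all sufficiently large $n$.

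The main obstacle is the balancing act in the choice of $\delta$: enlarging $\delta$ improves the geometric decay rate $C^{-(1+\delta)}$ but pushes $\gamma$ toward (and past) $2$, destroying the race between $C^{-\gamma}\log C$ and $C^{-2}$ in the final inequality; shrinking $\delta$ preserves the margin $\gamma<2$ but degrades the decay rate so much that the lower bound on $k$ no longer converts into an $nx_1(n)/C^2$ loss. The concrete choice $\delta=1/A_1$ (in fact any $\delta\in(0,2/A_1)$) threads the needle, and the hypothesis $C>e$ furnishes exactly the cushion required to absorb the additive constant $(1+\delta)\log C-1$.
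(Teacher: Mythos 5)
Your proof is correct and is a genuinely different route from the paper's. The paper's proof is considerably more elementary: it defines $k$ as the \emph{minimal} integer with $Ck/(nx_0(k))>1/e$, and then shows $k\ge nx_1(n)/C^2$ by a one-line contradiction argument that uses only the monotonicity of $x_0$ together with the definitional identity $x_0(nx_1(n))=x_1(n)$ — if $k<nx_1(n)/C^2$, then $Ck/(nx_0(k))<x_1(n)/(C\,x_0(nx_1(n)/C^2))<x_1(n)/(Cx_1(n))=1/C<1/e$, a contradiction. The decay $\exp(-nx_1(n)/C^2+1)$ then falls out directly from $(C(k-1)/(nx_0(k-1)))^{k-1}\le e^{-(k-1)}$. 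Crucially, the paper does \emph{not} invoke $(F4)$ beyond what is needed for $x_0,x_1$ to be defined; the constant $C^{-2}$ appears with no tuning at all. Your proof, by contrast, routes through the $(F4)$-induced dilation estimate $\lambda^{-A_1}\le F(\lambda x)/F(x)\le\lambda^{-A_2}$ and a balancing act between the decay exponent $1+\delta$ and the loss $\gamma=(2+\delta)A_1/(1+A_1)$, which works but is a heavier and $A_1$-dependent detour for a conclusion whose constant is $A_1$-independent. One small writing point: the sentence justifying $\lambda>1$ reads oddly at first; you do correctly obtain $k+1>nx_1(n)$ in the case $\lambda\le 1$ from $F(\lambda x_1(n))\ge F(x_1(n))=nx_1(n)$ and $F(\lambda x_1(n))<k+1$, but spelling this chain out explicitly would help, since the naive reading (inferring $k+1$ small from $\lambda$ small) goes the wrong way. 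A second small point: when you apply the dilation estimate at $\lambda x_1(n)=C^{2+\delta}(k+1)/n$, you should note (or arrange, say by taking $n$ large as you do elsewhere) that this quantity stays in $(0,A]$ so that $F$ is defined there; this is automatic in the paper's phrasing because the minimality condition is stated entirely in terms of $x_0$.
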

\begin{proof}
Let $k$ be the minimal integer such that
\begin{equation}
\label{a4} \frac{Ck}{nx_0(k)}>\frac 1e.
\end{equation}
If $k<nx_1(n)/C^2$, then
$$
\frac{Ck}{nx_0(k)}<\frac 1C\frac{nx_1(n)}{nx_0(\frac
1{C^2}nx_1(n))}<\frac 1C\frac{x_1(n)}{x_0(nx_1(n))}=\frac 1C,
$$
where in the last equation we used the definitions of $x_0(n)$ and
$x_1(n)$. This
contradicts~\eqref{a4}. Thus, $k\ge nx_1(n)/C^2$. Finally, applying
again~\eqref{a4} we get
$$
\left(\frac{C(k-1)}{nx_0(k-1)}\right)^{k-1}\le\left(\frac
1e\right)^{\frac{1}{C^2}nx_1(n)-1},
$$
and the claim easily follows.
\end{proof}

We will also need the following technical result.
\begin{lemma}
\label{Lemma4.4} For each $\omega\in \Omega$ and $c>0$ we have
$$
\sum_{v=n}^{\infty}\exp{(-cvx_1(v))}\le\exp{\big(-\frac c2
nx_1(n)\big)}
$$
for all $n$ large enough, i.e., for $n\ge n_0(\omega, c)$.
\end{lemma}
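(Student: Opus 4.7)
The plan is to reduce the tail sum to a geometric series by exploiting two facts. First, the map $v \mapsto v x_1(v) = F(x_1(v))$ is \emph{monotonically increasing} because $x_1$ is decreasing and $F$ is decreasing. Second, Corollary~\ref{Cor4.1} applied with $C = 2$ produces an integer $K = K(\omega)$ such that $(Kn)\, x_1(Kn) \ge 2 n x_1(n)$ for all $n$ large enough. Iterating this inequality level-by-level yields $(K^j n)\, x_1(K^j n) \ge 2^j\, n x_1(n)$ for every $j \ge 0$, so $v x_1(v)$ grows geometrically along the sparse grid $\{K^j n\}_{j \ge 0}$.

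With these two ingredients I would split the tail into dyadic blocks $[K^j n, K^{j+1} n)$, $j = 0, 1, 2, \ldots\,$. By monotonicity, $v x_1(v)$ attains its minimum on the $j$th block at the left endpoint, and the block contains at most $K^{j+1} n$ integers; writing $T := n x_1(n)$, this gives
\[
\sum_{v = n}^{\infty} \exp\bigl(-c v x_1(v)\bigr) \;\le\; \sum_{j=0}^{\infty} K^{j+1} n \, e^{-c\, 2^j T} \;=\; K n \sum_{j=0}^{\infty} K^j e^{-c 2^j T}.
\]
For $T$ large enough that $K e^{-cT} \le \tfrac{1}{2}$, each successive ratio in the inner series is bounded by $Ke^{-cT}\le\tfrac{1}{2}$, so geometric comparison gives $\sum_{j \ge 0} K^j e^{-c 2^j T} \le 2 e^{-cT}$, and hence $\sum_{v \ge n} e^{-c v x_1(v)} \le 2 K n \, e^{-cT}$.

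Finally, to tighten the bound from $e^{-cT}$ to $e^{-cT/2}$, one absorbs the polynomial prefactor $2Kn$ into the remaining exponential. Lemma~\ref{Lemma2.5} supplies $T = n x_1(n) \ge n^{\alpha}$ for some fixed $\alpha = \alpha(\omega) > 0$, so $e^{cT/2} \ge e^{(c/2) n^{\alpha}}$ eventually dominates any polynomial in $n$; thus $2Kn\, e^{-cT} \le e^{-cT/2}$ for $n \ge n_0(\omega, c)$, which is the desired inequality. The only step that genuinely requires care is this last absorption: without the polynomial lower bound on $n x_1(n)$ from Lemma~\ref{Lemma2.5}, the $2Kn$ prefactor produced by the dyadic decomposition could not be swallowed, so that lemma is what ultimately makes the argument close cleanly.
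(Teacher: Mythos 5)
Your proof is correct and follows essentially the same route as the paper: decompose the tail into geometric blocks, exploit geometric growth of $v\mapsto vx_1(v)$ along the block endpoints (you via Corollary~\ref{Cor4.1}, the paper directly via Lemma~\ref{Lemma3} with ratio $1+\epsilon$ on dyadic blocks), compare to a geometric series, and then absorb the polynomial prefactor using the lower bound $nx_1(n)\ge n^\alpha$ from Lemma~\ref{Lemma2.5}. The choice of $K$-adic rather than dyadic blocks is cosmetic; the underlying ingredients and the final absorption step are the same.
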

\begin{proof}
Indeed, since the sequence $nx_1(n)$ is increasing to infinity,
$$
\sum_{v=n}^{\infty}\exp{(-cvx_1(v))}=\sum_{s=0}^{\infty}\sum_{k=2^sn}^{2^{s+1}n-1}\exp{(-ckx_1(k))}
\le\sum_{s=0}^{\infty}n2^s\exp{(-c2^snx_1(2^sn))}.
$$
By Lemma~\ref{Lemma3} there exists $\epsilon=\epsilon(A,A_1,A_2)$
such that $2x_1(2n)\ge (1+\epsilon)x_1(n)$ for all $n$ large enough.
Then
$$
\sum_{v=n}^{\infty}\exp{(-cvx_1(v))}
\le\sum_{s=0}^{\infty}n2^s\exp{(-c(1+\epsilon)^snx_1(n))}:=\sum_{s=0}^{\infty}h_s.
$$
It is easy to check that, for $s\ge 0$ and $n\ge n_0(\omega, c)$,
$$
\frac{h_{s+1}}{h_s}\le 2\exp{(-c\epsilon nx_1(n))}\le\frac 12.
$$
Thus, Lemma~\ref{Lemma2.5} gives
$$
\sum_{v=n}^{\infty}\exp{(-cvx_1(v))}
\le 2h_0=2n\exp{(-cnx_1(n))}\le \exp{\big(-\frac c2 nx_1(n)\big)},\quad
n\ge n_0(\omega, c).
$$
\end{proof}

\subsection{Remez inequality for trigonometric polynomials}
We will need the following Remez inequality answering how large can be $\|T_n\|_{L_\infty(\T)}$ if
$$
\left|\Big\{t\in \T: |T_n(t)|>1\Big\}\right|\le \varepsilon
$$
for some $0<\varepsilon\le 1$ holds.

\begin{lemma}\label{l1}\cite{erd}, \cite{er3}
 For any Lebesgue measurable set $B\subset \mathbb{T}$ such that $|B|< \pi/2$ we have
\begin{equation}\label{rem-or}
\|T_n\|_{L_\infty(\T)}\le \exp ({4 n|B|})
\|T_n\|_{L_\infty(\T \setminus B)}.
\end{equation}
If $0<p<\infty$ and $|B|< 1/4$ we have
\begin{equation}\label{rem-or-p}
\|T_n\|_{L_p(\T)}\le \Big(1+\exp ({4 n|B|p})\Big) \|T_n\|_{L_p(\T \setminus B)}.
\end{equation}
\end{lemma}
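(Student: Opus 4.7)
Both inequalities are classical Remez-type bounds, and my plan is to establish the $L_\infty$ inequality first and then to derive the $L_p$ inequality from it. For the $L_\infty$ bound, the standard route is to reduce to the Remez inequality for algebraic polynomials on $[-1,1]$ via the substitution $x=\cos t$. Writing $T_n(t) = A(\cos t) + \sin t\cdot B_1(\cos t)$ with $\deg A\le n$ and $\deg B_1\le n-1$, one reformulates the bound as a Remez-type estimate for an algebraic polynomial of degree at most $2n$, for which the sharp constant is the value of the Chebyshev polynomial of degree $2n$ at $(2+\epsilon)/(2-\epsilon)$, with $\epsilon\asymp|B|$. Majorizing this by $\exp(4n|B|)$ through the elementary estimate $\cos(m\arccos(1+y))\le\exp(m\sqrt{2y(2+y)})$ gives the desired bound, and the hypothesis $|B|<\pi/2$ is precisely what ensures $\epsilon<2$ after the substitution so that the algebraic estimate applies.

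For the $L_p$ bound, my plan is to split
\begin{equation*}
\|T_n\|_{L_p(\T)}^p = \int_B |T_n|^p + \int_{\T\setminus B}|T_n|^p
\end{equation*}
and apply a layer-cake decomposition to the first integral:
\begin{equation*}
\int_B |T_n|^p = p\int_0^\infty \lambda^{p-1}\bigl|\{t\in B:\ |T_n(t)|>\lambda\}\bigr|\,d\lambda.
\end{equation*}
Partitioning the $\lambda$-range at $\lambda_0=\|T_n\|_{L_\infty(\T\setminus B)}$, one observes that for $\lambda>\lambda_0$ the super-level set $\{|T_n|>\lambda\}$ is entirely contained in $B$, so applying the $L_\infty$ inequality of the first part to this set itself forces $\lambda$ to lie within a factor $\exp(4n|B|)$ of $\|T_n\|_{L_\infty(\T)}$, while for $\lambda\le\lambda_0$ one uses the crude bound $|B|$ on the measure. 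Combining the two regimes should produce the factor $\exp(4n|B|p)$ in front of $\|T_n\|_{L_p(\T\setminus B)}^p$, with the additive $1$ coming from $\int_{\T\setminus B}|T_n|^p$.

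The main obstacle is converting the $L_\infty(\T\setminus B)$ norm that naturally appears from the first part into the $L_p(\T\setminus B)$ norm in the target bound. The hypothesis $|B|<1/4$ guarantees that $\T\setminus B$ occupies most of $\T$, so a Chebyshev-type estimate on level sets within $\T\setminus B$ should suffice to control $|B|\,\|T_n\|_{L_\infty(\T\setminus B)}^p$ by $\|T_n\|_{L_p(\T\setminus B)}^p$ without introducing any $n$-dependence in the constant. Tracking the accounting of the various pieces carefully enough to recover exactly the stated constant $(1+\exp(4n|B|p))$, as opposed to something slightly larger, is where the delicate work lies.
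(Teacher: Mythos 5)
The paper does not prove this lemma: it is stated with citations to Erd\'elyi's two papers (\cite{erd}, \cite{er3}) and used as an external fact, so there is no in-text proof to compare your proposal against. I will therefore assess it on its own merits.

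Your sketch of the $L_\infty$ inequality is a reasonable description of the standard route (reduction to the algebraic Remez/Chebyshev extremal problem via $x=\cos t$), although the decomposition $T_n(t)=A(\cos t)+\sin t\,B_1(\cos t)$ does not by itself hand you a single algebraic polynomial of degree $2n$; one must pass to something like $|T_n|^2$ or use Erd\'elyi's shifting device before the Chebyshev bound applies. That is a detail that would need filling in, but the strategy is sound.

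The $L_p$ part has a genuine gap. The layer-cake decomposition, carried out as you describe, gives at best
\begin{equation*}
\int_B |T_n|^p \;\le\; |B|\,\lambda_0^p\bigl(1+\exp(4n|B|p)\bigr),\qquad \lambda_0:=\|T_n\|_{L_\infty(\T\setminus B)},
\end{equation*}
after which you still need to compare $|B|\,\lambda_0^p$ with $\|T_n\|_{L_p(\T\setminus B)}^p$. You assert that ``a Chebyshev-type estimate on level sets within $\T\setminus B$'' accomplishes this ``without introducing any $n$-dependence in the constant,'' but this is exactly the nontrivial step, and no such clean estimate is available: the Nikolskii-type inequality relating $L_\infty$ to $L_p$ costs a factor $n^{1/p}$, and the natural lower bound for $\|T_n\|_{L_p(\T\setminus B)}^p$ in terms of $\lambda_0^p$ comes from a neighbourhood of a near-extremal point $t^*\in\overline{\T\setminus B}$, which need not lie in $\T\setminus B$ at all (it may sit on the boundary of $B$). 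Thus the argument as written does not close, and even if patched it is not at all clear that it produces the stated constant $1+\exp(4n|B|p)$ rather than something with an extra $n$- or $p$-dependent prefactor. Also note that the parenthetical observation in the $\lambda>\lambda_0$ regime, namely that the $L_\infty$ Remez inequality ``forces $\lambda$ to lie within a factor $\exp(4n|B|)$ of $\|T_n\|_{L_\infty(\T)}$,'' only yields a lower bound on $\lambda$ (equivalently a lower bound on $|\{|T_n|>\lambda\}|$), which is the wrong direction: what the layer cake needs is an upper bound on the measure of that super-level set, and the only one available is the crude $|B|$. Erd\'elyi's proof of the $L_p$ Remez inequality does not in fact factor through the $L_\infty$ bound in the way you propose; it is a direct argument on the distribution function, and I would recommend consulting \cite{erd} or \cite{er3} rather than trying to derive the $L_p$ case from the $L_\infty$ case.
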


\subsection{Two approximation theorems for
the  $\Omega$-weights}

We are now ready to prove the following result on simultaneous trigonometric approximation of functions from the class $\Omega$ and their derivatives.
\begin{theorem}
\label{Th4.4} For each $\omega\in \Omega$ there exists a positive constant $c=c(\omega)$
 such that
\begin{equation}
\label{w11} \|\omega-\omega_n\|_{C(\mathbb{T})}\le\exp{(-cnx_1(n))}
\end{equation}
and
\begin{equation}
\label{w22}
\|\omega'-\omega'_n\|_{C(\mathbb{T})}\le\exp{(-cnx_1(n))}
\end{equation}
hold  for  $n$ large enough, where  $\omega_n$ is the $n$-th partial sum of the Fourier series of $\omega$.
\end{theorem}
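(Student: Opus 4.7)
The plan is to bound the uniform error by the tail of the Fourier series of $\omega$ and then to combine the three lemmas from Subsection~3.1 in a very direct way. Namely, since $\omega-\omega_n=\sum_{|v|>n}\hat{\omega}_v e^{ivt}$ and $\omega'-\omega'_n=\sum_{|v|>n} iv\,\hat{\omega}_v e^{ivt}$, we have the crude but sufficient bounds
\begin{equation*}
\|\omega-\omega_n\|_{C(\T)}\le 2\sum_{v>n}|\hat{\omega}_v|,\qquad
\|\omega'-\omega'_n\|_{C(\T)}\le 2\sum_{v>n}v|\hat{\omega}_v|.
\end{equation*}
So it suffices to show that each tail is $\le\exp(-cnx_1(n))$.

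The first main step is to obtain the sharp pointwise decay estimate
\begin{equation*}
|\hat{\omega}_v|\le C_0\exp(-c_0 v x_1(v)),\qquad v\ge v_0(\omega),
\end{equation*}
by inserting into~(\ref{mumumu}) the near-optimal choice of $k$ provided by Lemma~\ref{Lemma4.3}. More precisely, fix any $C>e$; the derivative bound from Lemma~\ref{Lemma4.2} gives $\|\omega^{(k)}\|_{C(\T)}\le (Ck/x_0(k))^{k}$ for large $k$, so (\ref{mumumu}) yields $|\hat{\omega}_v|\le 2(Ck/(vx_0(k)))^k$ for every such $k$. Lemma~\ref{Lemma4.3} then selects $k=k(C,v,F)$ making this right-hand side at most $e\exp(-C^{-2}vx_1(v))$, which gives the claimed pointwise bound with $c_0=C^{-2}$.

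The second step is to sum the tails. For the zeroth-order estimate (\ref{w11}), we simply apply Lemma~\ref{Lemma4.4} with $c=c_0$ to get $\sum_{v>n}|\hat{\omega}_v|\le C_0\exp(-(c_0/2)nx_1(n))$, which proves (\ref{w11}) after redefining the constant $c$. For the derivative estimate (\ref{w22}) the only extra work is to absorb the factor $v$ into the exponential: by Lemma~\ref{Lemma2.5}, $vx_1(v)\ge v^{\alpha}$ for some $\alpha=\alpha(\omega)>0$ and all large $v$, hence $v\le\exp((c_0/2)vx_1(v))$ once $v$ is large enough. Therefore $v|\hat{\omega}_v|\le C_0\exp(-(c_0/2)vx_1(v))$, and a second application of Lemma~\ref{Lemma4.4} delivers (\ref{w22}) with a possibly smaller constant $c$.

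I do not expect a serious obstacle; the three lemmas of Subsection~3.1 have been designed precisely to feed into this argument. The only point requiring a little care is making sure that the universal constants $C>e$, $c_0$, $\alpha$ and the largeness thresholds for $n$ are compatible, but this is purely bookkeeping: one picks $C>e$ first (say $C=e^2$), which fixes $c_0$, then chooses $\alpha$ from Lemma~\ref{Lemma2.5}, and finally invokes Lemma~\ref{Lemma4.4} with $c=c_0$ (respectively $c=c_0/2$) after $n$ is taken large enough that the tail sum and the absorption of the factor $v$ are both valid.
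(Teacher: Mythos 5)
Your proposal is correct and follows essentially the same route as the paper's proof: bound the tail of the Fourier series, derive the pointwise decay $|\hat\omega_v|\lesssim\exp(-c_0 vx_1(v))$ from (\ref{mumumu}) combined with Lemmas~\ref{Lemma4.2} and~\ref{Lemma4.3}, and then sum via Lemma~\ref{Lemma4.4}, absorbing the extra factor $v$ in the derivative case by Lemma~\ref{Lemma2.5}. The only difference is that you make the invocation of Lemma~\ref{Lemma2.5} explicit where the paper does the absorption silently in a single display.
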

\begin{proof}
%It is enough to prove the theorem for $n\ge n_0$.
Integrating by parts and Lemma~\ref{Lemma4.2} imply that, for some
$C>0$,
$$
|\hat{\omega}_n|%=|\int_{\mathbb{T}}\omega(x)\cos{nt}dt|
 \le 2 \frac{\|\omega^{(k)}\|_{C(\mathbb{T})}}{n^k}\le \frac{C^kk!}{x_0^k(k)n^k}.
$$
%Then Lemma~\ref{Lemma4.2} implies that for some $C>0$
%$$\|\omega^{(k)}\|_{C(\mathbb{T})}\le \frac{C^kk!}{x_0^k(k)}. $$
 Hence, by Lemma~\ref{Lemma4.3}, there exists $c=c(\omega)$ such that,
for $n\ge n_0(\omega)$,
$$
|\hat{\omega}_n|\le\exp{(-cnx_1(n))}.
$$
Let $\omega_n$ be the $n$-th partial sum of the Fourier series of
$\omega$, i.e.,
$$
\omega_n(t)=\frac{\hat{\omega}_0}{2}+\sum_{k=1}^n\hat{\omega}_k\cos{kt}.
$$
Since $\omega \in C^{\infty}(\mathbb{T})$ then for each
$t\in\mathbb{T}$
$$
\omega_n(t)\to\omega(t)\qquad\text{and}\qquad\omega'_n(t)\to\omega'(t)\quad\text{as}\quad
n\to\infty.
$$
Therefore, taking into account Lemma~\ref{Lemma4.4}, we have for each $t\in\mathbb{T}$
$$
|\omega(t)-\omega_n(t)|\le\sum_{v=n+1}^{\infty}|\hat{\omega}_v|\le\sum_{v=n+1}^{\infty}\exp{(-cvx_1(v))}\le\exp{(-\frac
c2 nx_1(n))}
$$
and
$$
|\omega'(t)-\omega'_n(t)|\le\sum_{v=n+1}^{\infty}v|\hat{\omega}_v|\le\sum_{v=n+1}^{\infty}\exp{(-\frac
c2 vx_1(v))}\le\exp{(-\frac c4 nx_1(n))}.
$$
\end{proof}
Let $g$ be an analytic function  such as in  the definition of the  class $\Omega$, i.e., satisfying  (\ref{g})
and such that each zero of $g$ is of multiplicity one.
Let $\{a_1,\ldots,a_m\}$ be the set of all zeros of $g$ on $\T$.
For each $\epsilon>0$ denote
$$
B_{\epsilon}:=\Big\{t\in\mathbb{T}:|g(t)|<\epsilon\Big\}.
$$
Let us show that  the measure of $B_{\epsilon}$ is at most linear in ${\epsilon}$.
\begin{lemma}
\label{Lemma4.5} For an arbitrary $\epsilon>0$ we have
$$
|B_{\epsilon}|\le \,C(g)\,\epsilon.
$$
\end{lemma}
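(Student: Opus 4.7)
The plan is to exploit the two explicit hypotheses on $g$: it is analytic on the compact torus $\T$, and each of its zeros is simple. Together these imply that the zero set $\{a_1,\ldots,a_m\}$ is finite, and that $g'(a_i)\neq 0$ for each $i$. The level set $B_\epsilon$ is therefore a union of small neighborhoods around the $a_i$, in each of which $g$ grows at least linearly.

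First, by compactness of $\T$ and analyticity of $g$, the zeros form a finite set $\{a_1,\ldots,a_m\}$. Fix $i$. Since $g'(a_i)\neq 0$, continuity of $g'$ yields $\delta_i>0$ and a constant $c_i>0$ such that $|g'(t)|\ge c_i$ on $I_i:=[a_i-\delta_i,a_i+\delta_i]$. The mean value theorem then gives
\begin{equation*}
|g(t)|=|g(t)-g(a_i)|\ge c_i\,|t-a_i|,\qquad t\in I_i.
\end{equation*}
Shrinking the $\delta_i$ if necessary, we may assume the intervals $I_i$ are pairwise disjoint.

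Next, on the compact set $K:=\T\setminus\bigcup_{i=1}^m\mathrm{int}(I_i)$ the continuous function $|g|$ has no zeros, so $|g(t)|\ge c_0$ for some $c_0>0$ on $K$. Pick $\epsilon_0:=\min\{c_0,c_1\delta_1,\ldots,c_m\delta_m\}$. Then for $\epsilon\le\epsilon_0$ the set $B_\epsilon$ is contained in $\bigcup_i I_i$, and on each $I_i$ the inequality $|g(t)|<\epsilon$ forces $|t-a_i|<\epsilon/c_i$, so
\begin{equation*}
|B_\epsilon\cap I_i|\le \frac{2\epsilon}{c_i},\qquad |B_\epsilon|\le 2\epsilon\sum_{i=1}^m\frac{1}{c_i}.
\end{equation*}

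Finally, for $\epsilon>\epsilon_0$ the trivial bound $|B_\epsilon|\le 2\pi\le (2\pi/\epsilon_0)\,\epsilon$ suffices. Combining the two cases gives the claim with
\begin{equation*}
C(g)=\max\Bigl\{\,2\sum_{i=1}^m\tfrac{1}{c_i},\;\tfrac{2\pi}{\epsilon_0}\Bigr\}.
\end{equation*}
There is no real obstacle here; the only mild point is to separate the estimate into a neighborhood of the zero set (handled by simplicity of the zeros) and its complement (handled by compactness), and to verify that finite additivity over the $m$ zeros produces a single constant depending only on $g$.
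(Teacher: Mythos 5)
Your proof is correct, but the route differs from the paper's. You decompose $\T$ into small neighborhoods $I_i$ of the zeros $a_i$ (where the mean value theorem plus $g'(a_i)\neq 0$ gives the linear lower bound $|g(t)|\ge c_i|t-a_i|$) and the compact complement $K$ (where $|g|\ge c_0>0$), and you treat large $\epsilon$ trivially. The paper instead writes $|g(t)|=|(t-a_1)\cdots(t-a_m)\,h(t)|$ with $\min|h|=b>0$, observes that for a point $t_0$ at distance $>S\epsilon/b$ from every zero at most one factor $|t_0-a_j|$ can be smaller than $\tfrac13\min_{i<j}|a_i-a_j|$, and bounds the product from below by $\epsilon$; it then concludes $B_\epsilon$ lies in the union of the $m$ intervals of radius $S\epsilon/b$ around the zeros, giving $|B_\epsilon|\le \tfrac{2mS}{b}\epsilon$ in a single stroke, with no case split in $\epsilon$. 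Both proofs rest on the same two facts (finitely many zeros, each simple), but the paper's global factorization produces an explicit constant valid uniformly in $\epsilon$, whereas your local MVT argument is more elementary and sidesteps the mild imprecision of writing a periodic analytic function as a polynomial times a nonvanishing factor, at the cost of the extra compactness step and the $\epsilon\gtrless\epsilon_0$ dichotomy.
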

\begin{proof}
Since all zeros of $g$ have multiplicity one, then
$$
|g(t)|=|(t-a_1)\ldots(t-a_m)h(t)|,
$$
where $\min_{t\in\mathbb{T}}|h(t)|=b(g)=:b>0$. Set
$$
S:=\left(\frac{3}{\min_{1\le i<j\le m}{|a_i-a_j|}}\right)^{m-1}.
$$
For given $\epsilon>0$, let $t_0\in\mathbb{T}$ be such that
$$
|t_0-a_i|>\frac{S\epsilon}b \qquad\text{for all }\quad i=\overline{1,m}.
$$
Since the inequality
$$
|t_0-a_j|\le\frac{\min_{1\le i<j\le m}{|a_i-a_j|}}3
$$
may hold at most for one $j\in\overline{1,m}$  we have
$$
|g(t_0)|\ge\frac{S\epsilon}b\left(\frac{\min_{1\le i<j\le
m}{|a_i-a_j|}}3\right)^{m-1}b =\epsilon.
$$
Hence, $t_0\not\in B_{\epsilon}$. Therefore, for each $t\in
B_{\epsilon}$, there exists $j\in\overline{1,m}$ such that
$$
|t-a_j|\le\frac{S\epsilon}b.
$$
Thus,
$$|B_{\epsilon}|\le\frac{2mS}b\,\epsilon.$$
\end{proof}

%\subsection{A general approximation theorem}

Now we are in a position to prove the following approximation theorem.
\begin{theorem}
\label{Th4.6} For each $\omega\in \Omega$ there exists an integer
constant $K=K(\omega)$ such that for each trigonometric polynomial $T_n$ we have
\begin{equation}
\label{approx} \frac 12\int_{\mathbb{T}}|T_n(t)\omega_{Kn}(t)|dt\le
\int_{\mathbb{T}}|T_n(t)|\omega(t)dt\le
2\int_{\mathbb{T}}|T_n(t)\omega_{Kn}(t)|dt,
\end{equation}
where $\omega_n$ is the $n$-th partial Fourier sum  of $\omega$.
\end{theorem}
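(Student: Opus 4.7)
The approach is to replace $\omega$ by its $(Kn)$-th Fourier partial sum $\omega_{Kn}$, using the sharp approximation estimate $\|\omega-\omega_{Kn}\|_{C(\T)}\le \exp(-cKnx_1(Kn))$ from Theorem~\ref{Th4.4}, and to control the resulting error by a Remez-type inequality on the set where $g$ (and hence $\omega$) is small. Since $\omega\ge 0$, one has
$$
\bigl| |T_n(t)|\omega(t) - |T_n(t)\omega_{Kn}(t)|\bigr| = |T_n(t)|\bigl|\omega(t) - |\omega_{Kn}(t)|\bigr| \le |T_n(t)|\cdot\|\omega-\omega_{Kn}\|_{C(\T)},
$$
so both inequalities in (\ref{approx}) will follow as soon as I prove
$$
\exp(-cKnx_1(Kn))\,\|T_n\|_{L_1(\T)}\le \tfrac 12 \int_{\T}|T_n(t)|\omega(t)\,dt
$$
for a suitable $K=K(\omega)$.

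To bound $\|T_n\|_{L_1(\T)}$ in terms of $\int_\T|T_n|\omega$, I apply the $L_1$ Remez inequality (\ref{rem-or-p}) with the exceptional set $B = B_{x_1(n)}=\{t\in\T:|g(t)|<x_1(n)\}$. By Lemma~\ref{Lemma4.5}, $|B_{x_1(n)}|\le C(g) x_1(n)$, which is less than $1/4$ for $n$ large, so
$$
\|T_n\|_{L_1(\T)}\le \bigl(1+\exp(4C(g)nx_1(n))\bigr)\,\|T_n\|_{L_1(\T\setminus B_{x_1(n)})}.
$$
On $\T\setminus B_{x_1(n)}$ one has $|g(t)|\ge x_1(n)$, and since $F$ is even and decreasing on $(0,A]$, $\omega(t)\ge \exp(-F(x_1(n)))=\exp(-nx_1(n))$ by the defining relation $F(x_1(n))=nx_1(n)$. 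Combining these yields
$$
\|T_n\|_{L_1(\T)}\le 2\exp\bigl((4C(g)+1)nx_1(n)\bigr)\int_\T|T_n(t)|\omega(t)\,dt.
$$

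The final step is to choose $K=K(\omega)$ so that $cKx_1(Kn)$ dominates $(4C(g)+1)x_1(n)$ by a definite factor. Corollary~\ref{Cor4.1} provides exactly this flexibility: for any prescribed constant one can pick an integer $K$ with $cKx_1(Kn)\ge 2(4C(g)+1)x_1(n)$ for all $n$ large. With this $K$,
$$
\exp(-cKnx_1(Kn))\cdot 2\exp((4C(g)+1)nx_1(n))\le 2\exp(-(4C(g)+1)nx_1(n)),
$$
and this is at most $1/2$ for all $n$ large, since $nx_1(n)\to\infty$ by Lemma~\ref{Lemma2.5}. This settles (\ref{approx}) for $n\ge n_0(\omega)$; the finitely many remaining small $n$ are absorbed by further enlarging $K$, using that $\omega_{Kn}\to\omega$ uniformly on $\T$ as $K\to\infty$.

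The main obstacle is that the two key quantities --- the approximation error $\exp(-cKnx_1(Kn))$ and the Remez-induced blow-up $\exp(O(nx_1(n)))$ --- are exponentials of the same order in the natural scale $nx_1(\cdot)$. The argument succeeds precisely because of the dilation freedom built into the parameter $K$: Corollary~\ref{Cor4.1} guarantees that $Knx_1(Kn)$ can be made an arbitrarily large multiple of $nx_1(n)$ by taking $K$ large, which is the quantitative feature of $\Omega$-weights that lets the approximation rate beat the Remez factor.
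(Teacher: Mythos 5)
Your proof is correct and follows essentially the same route as the paper: approximate $\omega$ by $\omega_{Kn}$ via Theorem~\ref{Th4.4}, control $\|T_n\|_{L_1(\T)}$ against $\int_\T |T_n|\omega$ via the Remez inequality applied on $B_{x_1(n)}$ together with Lemma~\ref{Lemma4.5} and the bound $\omega\ge \exp(-nx_1(n))$ off $B_{x_1(n)}$, and then invoke Corollary~\ref{Cor4.1} to pick $K$ so that $cKnx_1(Kn)$ beats the Remez exponent. Your explicit handling of the finitely many small $n$ (by letting $K$ grow and using uniform convergence of $\omega_{Kn}$ plus compactness) is a small point the paper leaves implicit, but the core argument is identical.
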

\begin{proof}It is enough to verify (\ref{approx}) for sufficiently large $n$.
Using Theorem~\ref{Th4.4} we get
$$
\int_{\mathbb{T}}|T_n(t)||\omega(t)-\omega_{Kn}(t)|dt\le\exp{(-cKnx_1(Kn))}\int_{\mathbb{T}}|T_n(t)|dt.
$$
We define
$$
B_{x_1(n)}=\Big\{t\in\mathbb{T}:|g(t)|<x_1(n)\Big\}.
$$
Then, Lemma~\ref{Lemma4.5} implies that $|B_{x_1(n)}|\le Cx_1(n)$, where $C$ depends
only on $\omega$. Then, by the Remez inequality we get
\begin{eqnarray*}
\int_{\mathbb{T}}|T_n(t)||\omega(t)-\omega_{Kn}(t)|dt
&\le&
\exp{(-cKnx_1(Kn))}
\exp{(4n|B_{x_1(n)}|)}\int_{\mathbb{T}\setminus B_{x_1(n)}}|T_n(t)|dt
\\
&\le&
\exp{(-cKnx_1(Kn)+Cnx_1(n))}\int_{\mathbb{T}\setminus
B_{x_1(n)}}|T_n(t)|dt.
\end{eqnarray*}
Note that for each $t\in\mathbb{T}\setminus B_{x_1(n)}$,
\begin{equation}
\label{223}
\omega(t)=\exp{\left(-F(g(t))\right)}\ge\exp{\left(-F(x_1(n))\right)}=\exp{(-nx_1(n))}.
\end{equation}
Therefore,
$$
\int_{\mathbb{T}}|T_n(t)||\omega(t)-\omega_{Kn}(t)|dt\le\exp{\Big(-cKnx_1(Kn)+Cnx_1(n)+nx_1(n)\Big)}\int_{\mathbb{T}\setminus
B_{x_1(n)}}|T_n(t)|\omega(t)dt.
$$
Now, by Corollary~\ref{Cor4.1} we can choose integer $K$ large
enough such that
$$
\int_{\mathbb{T}}|T_n(t)||\omega(t)-\omega_{Kn}(t)|dt\le\frac
12\int_{\mathbb{T}\setminus B_{x_1(n)}}|T_n(t)|\omega(t)dt\le\frac
12\int_{\mathbb{T}}|T_n(t)|\omega(t)dt.
$$
This immediately implies the statement of the theorem.
\end{proof}

\vspace{6mm}
\section{Weighted Bernstein inequality in $L_1$}
\label{section-l1}
In this section we  prove the Bernstein inequality in $L_1(\omega)$, where $\omega\in \Omega$.
\begin{theorem}
\label{Th5.1.} Let $\omega\in \Omega$. Then for each trigonometric polynomial $T_n$ of degree at most $n$
\begin{equation}
\label{bern1}
\int_{\mathbb{T}}|T'_n(t)|\omega(t)dt\le\, C(\omega)\,n\int_{\mathbb{T}}|T_n(t)|\omega(t)dt.
\end{equation}
\end{theorem}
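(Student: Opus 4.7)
The plan is to reduce Bernstein's inequality for $\omega\in\Omega$ to the classical (unweighted) $L_1$ Bernstein inequality applied to a polynomial of higher degree, by approximating $\omega$ with its Fourier partial sum $\omega_{Kn}$. With $K=K(\omega)$ the integer provided by Theorem~\ref{Th4.6}, I apply that theorem to the trigonometric polynomial $T_n'$ of degree at most $n$ to get $\int_\T|T_n'|\omega\,dt\le 2\int_\T|T_n'\omega_{Kn}|\,dt$, and then the Leibniz rule $T_n'\omega_{Kn}=(T_n\omega_{Kn})'-T_n\omega_{Kn}'$ yields
\[
\int_\T|T_n'|\omega\,dt\le 2\int_\T|(T_n\omega_{Kn})'|\,dt+2\int_\T|T_n\omega_{Kn}'|\,dt.
\]
Since $T_n\omega_{Kn}$ is a trigonometric polynomial of degree at most $(K+1)n$, the classical $L_1$ Bernstein inequality combined once more with Theorem~\ref{Th4.6} gives
\[
\int_\T|(T_n\omega_{Kn})'|\,dt\le (K+1)n\int_\T|T_n\omega_{Kn}|\,dt\le 2(K+1)n\int_\T|T_n|\omega\,dt,
\]
which is already of the desired form. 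The whole difficulty is to show that $\int_\T|T_n\omega_{Kn}'|\,dt\le C(\omega)n\int_\T|T_n|\omega\,dt$.

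For this second term I first replace $\omega_{Kn}'$ by $\omega'$ at the cost of a uniform error $\|\omega_{Kn}'-\omega'\|_{C(\T)}\le\exp(-cKnx_1(Kn))$ furnished by Theorem~\ref{Th4.4}. The pointwise identity $|\omega'(t)|=|F'(g(t))||g'(t)|\omega(t)$ and the monotonicity of $F(x)/x$ on $(0,A]$ (a consequence of $(F4)$) are then exploited by splitting $\T=\{|g|\ge\eta\}\cup B_\eta$ with $\eta=x_1(n)/L$ for some $L$ to be chosen. On $\{|g|\ge\eta\}$, property $(F4)$ together with Lemma~\ref{Lemma1} gives $|F'(g(t))|\le A_1F(\eta)/\eta\le C(\omega,L)n$, and hence $|\omega'(t)|\le C(\omega,L)n\omega(t)$, making the contribution of this set at most $C(\omega,L)n\int_\T|T_n|\omega\,dt$. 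On $B_\eta$, properties $(F3)$, $(F4)$ combined with Lemma~\ref{Lemma1} and the elementary inequality $u^{1+1/A_1}e^{-u}\le e^{-u/2}$ (valid for large $u$) yield the pointwise bound $|\omega'(t)|\le\exp(-F(\eta)/2)$.

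To absorb the factor $\int_\T|T_n|\,dt$ which appears both in the $B_\eta$-contribution and in the Fourier error, I estimate $|B_{x_1(n)}|\le Cx_1(n)$ via Lemma~\ref{Lemma4.5}, apply the Remez inequality~\eqref{rem-or-p} to this set, and use the lower bound $\omega\ge\exp(-nx_1(n))$ on $\T\setminus B_{x_1(n)}$ from~\eqref{223} to obtain $\int_\T|T_n|\,dt\le\exp(C(\omega)nx_1(n))\int_\T|T_n|\omega\,dt$. The main obstacle is the resulting delicate balance of parameters: $K$ must first be fixed large via Corollary~\ref{Cor4.1} so that $cKx_1(Kn)$ dominates twice the Remez constant $C(\omega)x_1(n)$, thereby killing the Fourier error; only then can $L$ be chosen via Corollary~\ref{Cor4.2} so that $F(\eta)/2>C(\omega)nx_1(n)$, which kills the $B_\eta$-contribution. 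Summing all pieces produces~\eqref{bern1}.
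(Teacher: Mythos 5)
Your proof follows essentially the same route as the paper's: pass to $\omega_{Kn}$ via Theorem~\ref{Th4.6}, apply Leibniz and the classical $L_1$ Bernstein inequality to $(T_n\omega_{Kn})'$, and handle $\int_\T|T_n|\,|\omega'_{Kn}|$ by splitting $\T$ near the zeros of $g$ and absorbing $\int_\T|T_n|$ via the Remez inequality on $B_{x_1(n)}$ together with the lower bound $\omega\ge\exp(-nx_1(n))$ off $B_{x_1(n)}$; the parameter order ($K$ fixed first via Corollary~\ref{Cor4.1}, then $L$ via Corollary~\ref{Cor4.2}) is also the paper's. The one genuine streamlining is that you split $\T$ directly according to $B_\eta=\{|g|<\eta\}$ with $\eta=x_1(n)/L$, whereas the paper introduces the auxiliary set $B_{n,M}=\{|F'(g)g'|\ge Mn\}$ and only afterwards shows $B_{n,M}\subset B_\eta$; the two are equivalent, and your version is slightly more direct.

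One step is misattributed, though. You claim that $(F4)$ together with Lemma~\ref{Lemma1} gives $F(\eta)/\eta\le C(\omega,L)n$ on $\{|g|\ge\eta\}$. But Lemma~\ref{Lemma1} only yields $F(\eta)\le C\eta^{-A_1}$, and its lower bound together with $F(x_1(n))=nx_1(n)$ gives $x_1(n)\ge c\,n^{-1/(A_2+1)}$; since $(F4)$ forces $A_1\ge A_2$, combining these produces $F(\eta)/\eta\le C(L)\,n^{(A_1+1)/(A_2+1)}$, an exponent $\ge 1$ and in general strictly $>1$, which is not good enough. The correct ingredient is Corollary~\ref{Cor4.1.5} — this is precisely the paper's inequality~\eqref{vsp}: for each $L$ there is $Q=Q(L,\omega)$ with $F(x_1(n)/L)\le F(x_1(Qn))=Qnx_1(Qn)\le Qnx_1(n)$, hence $F(\eta)/\eta\le QLn$. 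There is also a small typo: the exponent in your elementary inequality should be $1/A_2$, i.e.\ $u^{1+1/A_2}e^{-u}\le e^{-u/2}$, since it arises from $1/|g|\le(F(g)/c)^{1/A_2}$ via the lower bound of Lemma~\ref{Lemma1}. With Corollary~\ref{Cor4.1.5} inserted in place of Lemma~\ref{Lemma1} at the indicated point, your argument matches the paper's.
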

\begin{proof}
Since the inequality
\begin{equation}\label{continuous}
\int_{\mathbb{T}}|T'_n(t)|\omega(t)dt\le\,
C(\omega, n)\,\int_{\mathbb{T}}|T_n(t)|\omega(t)dt
\end{equation}
holds for any continuous weight $\omega$, it is enough
 to prove (\ref{bern1}) for $n$ large enough.
 The proof is in three steps.
 \\[10pt]
 \underline{Step 1}. By
Theorem~\ref{Th4.6} there exists an integer $K=K(\omega)$ large enough such that the $Kn$-partial Fourier sum $\omega_{Kn}$ satisfies
the following:
\begin{eqnarray}\label{dva}
\int_{\mathbb{T}}|T'_n(t)|\omega(t)dt
\nonumber
&\le&
2\int_{\mathbb{T}}|T'_n(t)\omega_{Kn}(t)|dt\\&\le&
2\int_{\mathbb{T}}|(T_n(t)\omega_{Kn}(t))'|dt+2\int_{\mathbb{T}}|T_n(t)\omega'_{Kn}(t)|dt=:I_1+I_2.
\end{eqnarray}
Then by the classical Bernstein inequality and Theorem~\ref{Th4.6} we
have
$$
I_1\le CKn\int_{\mathbb{T}}|T_n(t)\omega_{Kn}(t)|dt\le
C(\omega)n\int_{\mathbb{T}}|T_n(t)|\omega(t)dt.
$$
Further,
$$
I_2\le
2\int_{\mathbb{T}}|T_n(t)||\omega'(t)|dt+2\int_{\mathbb{T}}|T_n(t)||\omega'(t)-\omega'_{Kn}(t)|dt=:I_{21}+I_{22}.
$$
 \\[10pt]
\underline{Step 2}. To estimate $I_{21}$, let us define % we will use certain properties of
 the set
$$
B_{n,M}:=\Big\{t\in\mathbb{T}:g(t)\neq 0,\,\text{and
}|F'(g(t))g'(t)|\ge Mn\Big\}.
$$
Note that, for any $t\in B_{n,M}$, it follows from $(F4)$ that
$$
A_1\frac{F(g(t))}{|g(t)|}\|g'\|_{C(\mathbb{T})}\ge Mn,
$$
and therefore,
\begin{equation}
\label{111} \frac{F(g(t))}{|g(t)|}\ge M_2n,\quad \text{where}\quad
 M_2=\frac{M}{A_1D}.
\end{equation}
Using Corollary~\ref{Cor4.1.5} we have that for each $L>0$ there
exists $Q=Q(L,\omega)>1$ such that
$$
F\Big(\frac{x_1(n)}L\Big)\le F(x_1(Qn))=Qnx_1(Qn)<Qnx_1(n)
$$
for $n$ large enough.
Then, for all $x\in [x_1(n)/L,A]$, we get
\begin{equation}\label{vsp}
F(x)\le
F\Big(\frac{x_1(n)}L\Big)<Qnx_1(n)\le xQLn.
\end{equation}
Therefore, if
\begin{equation}
\label{225} M_2=\frac{M}{A_1D}>QL,
\end{equation}
then~\eqref{111} and \eqref{vsp} imply
$$
|g(t)|<\frac{x_1(n)}L, \quad t\in B_{n,M}.
$$
Now, for each $K\in\N$, taking $L=L(K,\omega)$ as in Corollary~\ref{Cor4.2} we have
\begin{equation}\label{zv17}
F(g(t))\ge F\Big(\frac{x_1(n)}L\Big) \ge Kx_1(n)n.
\end{equation}
Moreover, by Lemma~\ref{Lemma1} we have
$$
\frac{F(g(t))}{|g(t)|}\le C(\omega)\left(F(g(t))\right)^{1+\frac
1{A_2}},\quad t \in B_{n,M}.
$$
Let us estimate $|\omega'(t)|$ from above for $t\in B_{n,M}$. In view of $(F_1)$ and $(F_4)$, we get
\begin{eqnarray}\nonumber
|\omega'(t)|&=&\omega(t)\Big|F'(g(t))g'(t)\Big|\le A_1 D\,\omega(t)\frac{F(g(t))}{|g(t)|}
\\\nonumber&\le&
 C(\omega)\exp{\left(-F(g(t))\right)}\big(F(g(t))\big)^{1+\frac1{A_2}}
\\
\label{112} &\le& C(\omega)\exp{\left(-F(g(t))/2\right)},\qquad\qquad t\in
B_{n,M},
\end{eqnarray}
where in the last estimate we have used (\ref{zv17}) and the fact that $nx_1(n)\to \infty$ as $n\to \infty$.

Hence, \eqref{zv17} and \eqref{112} imply
\begin{equation}
\label{a5} |\omega'(t)|\le
C(\omega)\exp{\left(-Kx_1(n)n/2\right)},\quad t\in B_{n,M}.
\end{equation}
 \\[10pt]
\underline{Step 3}. Now we are ready to estimate $I_{21}$. We have
$$
I_{21}=
2\int_{B_{n,M}}|T_n(t)||\omega'(t)|dt+2\int_{\mathbb{T}\setminus
B_{n,M}}|T_n(t)||\omega'(t)|dt=:I_{211}+I_{212}.
$$
Let us estimate $I_{211}$. Thanks to \eqref{a5}, we obtain
$$
I_{211}=2\int_{B_{n,M}}|T_n(t)||\omega'(t)|dt\le
C(\omega)\exp{\left(-Kx_1(n)n/2\right)}\int_{B_{n,M}}|T_n(t)|dt
$$
$$
\le
C(\omega)\exp{\left(-Kx_1(n)n/2\right)}\int_{\mathbb{T}}|T_n(t)|dt.
$$
Now, as in the proof of Theorem~\ref{Th4.6}, we consider
$$
B_{x_1(n)}=\Big\{t\in\mathbb{T}:|g(t)|<x_1(n)\Big\}.
$$
By the Remez inequality and Lemma~\ref{Lemma4.5} we get
\begin{eqnarray}
I_{211}&\le&
C(\omega)\exp{\left(-Kx_1(n)n/2\right)}\exp{(4n|B_{x_1(n)}|)}\int_{\mathbb{T}\setminus
B_{x_1(n)}}|T_n(t)|dt
\nonumber
\\
\nonumber &\le&
C(\omega)\exp{\left(-Kx_1(n)n/2+C(\omega)nx_1(n)\right)}\int_{\mathbb{T}\setminus
B_{x_1(n)}}|T_n(t)|\omega(t)dt\\&\le&
\label{224}
C(\omega)\int_{\mathbb{T}}|T_n(t)|\omega(t)dt
\end{eqnarray}
for $K\in\N$ large enough. On the other hand, it follows from the definition of $B_{n,M}$ that
$$
I_{212}=2\int_{\mathbb{T}\setminus B_{n,M}}|T_n(t)||\omega'(t)|dt\le
2M\, n\int_{\mathbb{T}}|T_n(t)|\omega(t)dt.
$$
Thus,
$$
I_{21}\le C(\omega)n\int_{\mathbb{T}}|T_n(t)|\omega(t)dt.
$$
Regarding $I_{22}$, we first note that Theorem~\ref{Th4.4} yields
\begin{equation*}
I_{22}\le
\exp{(-c(\omega) Knx_1(Kn))}\int_{\mathbb{T}}|T_n(t)|dt.
%\le C(\omega)\int_{\mathbb{T}}|T_n(t)|\omega(t)dt,
\end{equation*}
Similarly as we proceed in the estimates of $I_{211}$, we use Remez's inequality for the set $B_{x_1(n)}$ and Lemma \ref{Lemma4.5} to get
\begin{equation}
\label{227} I_{22}\le C(\omega)\int_{\mathbb{T}}|T_n(t)|\omega(t)dt,
\end{equation}
for $K\in\N$ large enough.

Let us explain how we choose the constants $K, L,Q,$ and $M$. First, $K\in\N$ is taking large enough such that
~\eqref{dva},
~\eqref{224},
and~\eqref{227} hold. Further we choose
$L=L(K,\omega)$ as in Corollary~\ref{Cor4.2}, $Q=Q(L,\omega)$ as in
Corollary~\ref{Cor4.1.5}, and finally $M>QLA_1D$ so that~\eqref{225}
holds.
\end{proof}

\vspace{6mm}

\section{Weighted Remez inequalities}
\label{section-remez}
 For an arbitrary measurable set $E$, denote
$%\|T_n\|_{L_p(\omega)}=
\|T_n\|_{L_p(\omega, E)}=
\left(\int_{E}|T_n|^p\omega\right)^{1/p}$ if $p<\infty$ and
$%\|T_n\|_{L_\infty(\omega)}=
\|T_n\|_{L_\infty(\omega, E)}= \esssup_{t\in E}|T_n(t)\omega(t)|$.
We use the notation $\|T_n\|_{L_p(\omega)}$ rather than
$\|T_n\|_{L_p(\omega,\T)}$.

The following classes play an important role in our further study.
\begin{definition}
We say that a weight $u$ satisfies the $\mathcal{R}(p)$ condition,
$0<p\le \infty$, and write $u\in \mathcal{R}(p)$,
 if for any trigonometric polynomial $T_n$ the
weighted Remez inequality holds, that is, there exists $C=C(p,u)>0$ such that
\begin{equation}
\label{665-1} \|T_n\|_{L_p(u, \T)} \le \, \exp({C n |E|})\,
\|T_n\|_{L_p(u, \T\backslash E)}
\end{equation}
for all measurable sets $E$ with $|E|\le 1$.
%  union of intervals of length $\ge d/n$ for some $d$ and $|B|\le 1$;
\end{definition}
%???Why here $E$ is a measurable set of positive measure and before union of intervals of length $\ge d/n$ for some $d$ and $|B|\le 1$???

\begin{definition}
We say that a weight $u$ satisfies the $\mathcal{R}_{int}(p)$ condition, $0<p\le \infty$, and write $u\in \mathcal{R}_{int}(p)$,
 if for any trigonometric polynomial
$T_n$ the restricted weighted Remez inequality holds, that is, there exists $C=C(p,u)>0$ such that
\begin{equation} \label{665}
\|T_n\|_{L_p(u, \T)} \le \, \exp({C n |E|})\, \|T_n\|_{L_p(u,
\T\backslash E)}
\end{equation}
for all sets $E$ which are a finite union of intervals of length
$\ge 1/n$ and such that $|E|\le 1$.
\end{definition}
\begin{remark}
One can define the class $\mathcal{R}_{int}(p, d)$ such that for any $T_n$ and
the set $E$ being a finite union of intervals of length $\ge d/n$
we have
$\|T_n\|_{L_p(u, \T)} \le \, \exp({C n |E|})\, \|T_n\|_{L_p(u,
\T\backslash E)}$ for some constant $C=C(p, u, d)$.
Then $\mathcal{R}_{int}(p, d)= \mathcal{R}_{int}(p).$
\end{remark}
We will need the following approximation inequalities for the weight $\omega^{1/p}$ that are similar to Theorems \ref{Th4.4} and \ref{Th4.6}.

\begin{lemma} \label{lemma-v^p} Let $\omega=\exp \big({-F(g(t))}\big)\in \Omega$ and $v=\omega^{1/p}$ for $p\in (0, \infty)$.
Let $v_n$ is the $n$-th partial Fourier sum of $v$.
\\
{\textnormal{ (A)}}.
We have %for $v=\omega^{1/p}$ we have
\begin{equation}\label{2zv}
 \|v^p-|v_n|^p\|_{C(\mathbb{T})}\le\exp{(-c(p,\omega)nx_1(n))}
\end{equation}
and\begin{equation}\label{3zv}
\|v'-v'_n\|_{C(\mathbb{T})}\le\exp{(-c(p,\omega)nx_1(n))}
\end{equation}
for $n$ large enough, where %$v_n$ provided by Theorem \ref{Th4.4} applying for the weight $v$ in place of $\omega$ and
 $x_1(n)$ is the unique positive solution of the equation $F(x_1(n))=nx_1(n)$.
\\
{\textnormal{ (B)}}. For any $u\in \mathcal{R}_{int}(p)$, there exists
$K=K(\omega, u,p)$ such that
\begin{equation}\label{500}
\frac 12\int_{\mathbb{T}}|T_n(t)|^p |v_{Kn}(t)|^pu(t)dt \le
\int_{\mathbb{T}}|T_n(t)|^p \omega(t)u(t)dt\le
2\int_{\mathbb{T}}|T_n(t)|^p|v_{Kn}(t)|^pu(t)dt.
\end{equation}
\\
{\textnormal{ (C)}}. For any $u\in \mathcal{R}_{int}(\infty)$, there exists
$K=K(\omega, u)$ such that
\begin{equation}\label{5005}
\frac 12 \| T_n \omega_{Kn}  u \|_{L_\infty(\T)}
\le \| T_n \omega  u \|_{L_\infty(\T)}
\le
2 \| T_n \omega_{Kn}  u \|_{L_\infty(\T)},
\end{equation}
where $\omega_n$ is the $n$-th partial Fourier sum of $\omega$.
\end{lemma}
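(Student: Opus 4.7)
The plan is to treat (A) as the core technical input and to deduce (B) and (C) by adapting the argument of Theorem~\ref{Th4.6} to include the extra weight $u$. For (A), the key observation is that $v=\omega^{1/p}=\exp(-F(g(t))/p)$ itself lies in $\Omega$: the function $F/p$ satisfies $(F1)$--$(F4)$ with the same $A_1,A_2$ (only $B$ is rescaled). Consequently, Theorem~\ref{Th4.4} applied to $v$ immediately delivers $\|v-v_n\|_{C(\T)}\le\exp(-c\,n\widetilde{x}_1(n))$ and the analogous bound for $v'-v_n'$, where $\widetilde{x}_1(n)$ solves $F(x)=pnx$. Corollaries~\ref{Cor4.1}--\ref{Cor4.1.5} show $n\widetilde{x}_1(n)\asymp nx_1(n)$, so \eqref{3zv} follows at once. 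For \eqref{2zv} I apply the elementary pointwise bound $\bigl||a|^p-|b|^p\bigr|\le C_p\max(|a|,|b|)^{\max(p-1,0)}|a-b|^{\min(p,1)}$, using $v\le 1$ and $|v_n|\le 2$ for large $n$: this reduces the claim to the bound already in hand on $\|v-v_n\|_{C(\T)}$, with a possibly smaller $c$.

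For (B) I model the argument on the proof of Theorem~\ref{Th4.6}. Part (A) immediately gives
\[
\int_\T |T_n|^p\,\bigl|\omega-|v_{Kn}|^p\bigr|\,u\,dt \le \exp(-cKnx_1(Kn))\int_\T |T_n|^p u\,dt.
\]
To convert the right-hand side back into the weighted integral, I introduce $B_{x_1(n)}=\{t\in\T:|g(t)|<x_1(n)\}$. By Lemma~\ref{Lemma4.5}, $|B_{x_1(n)}|\le Cx_1(n)$; by Lemma~\ref{Lemma2.5}, $x_1(n)\gtrsim 1/n$ for $n$ large; and since every zero of $g$ is simple, each connected component of $B_{x_1(n)}$ is an interval of length comparable to $x_1(n)$, hence $\ge d/n$. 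Therefore the assumption $u\in\mathcal{R}_{int}(p)$ applies and yields $\int_\T|T_n|^p u\le\exp(Cnx_1(n))\int_{\T\setminus B_{x_1(n)}}|T_n|^p u$. On $\T\setminus B_{x_1(n)}$ the pointwise inequality $\omega(t)\ge\exp(-nx_1(n))$ then gives $\int_{\T\setminus B_{x_1(n)}}|T_n|^p u\le\exp(nx_1(n))\int_\T|T_n|^p\omega u$. Chaining these three estimates and invoking Corollary~\ref{Cor4.1} to choose $K$ so large that $cKnx_1(Kn)$ dominates the total accumulated exponent by $\log 2$ produces $\int_\T|T_n|^p\bigl|\omega-|v_{Kn}|^p\bigr|u\,dt\le\tfrac12\int_\T|T_n|^p\omega u\,dt$, from which \eqref{500} follows by the triangle inequality.

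Part (C) is the sup-norm analogue. Applying Theorem~\ref{Th4.4} directly to $\omega$ gives $\|T_n(\omega-\omega_{Kn})u\|_{L_\infty(\T)}\le\exp(-cKnx_1(Kn))\|T_nu\|_{L_\infty(\T)}$. The restricted Remez inequality for $u\in\mathcal{R}_{int}(\infty)$ on $B_{x_1(n)}$, combined with the same lower bound $\omega\ge\exp(-nx_1(n))$ off $B_{x_1(n)}$, yields $\|T_nu\|_{L_\infty(\T)}\le\exp(C'nx_1(n))\|T_n\omega u\|_{L_\infty(\T)}$. Choosing $K$ large via Corollary~\ref{Cor4.1} shrinks the error term below $\tfrac12\|T_n\omega u\|_{L_\infty(\T)}$, and \eqref{5005} follows from the triangle inequality.

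The principal obstacle is ensuring that $B_{x_1(n)}$ is a finite union of intervals of length at least $d/n$, so that the \emph{restricted} Remez property (rather than the full one) is what gets used on $u$: this rests on the lower bound $x_1(n)\gtrsim 1/n$ from Lemma~\ref{Lemma2.5} together with the simplicity of the zeros of $g$. A secondary technical point is passing uniformly across $0<p<\infty$ from $\|v-v_n\|_{C(\T)}$ to $\|v^p-|v_n|^p\|_{C(\T)}$, which is handled by the elementary $p$-th power inequality noted in the first paragraph.
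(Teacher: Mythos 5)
Your proposal is correct and follows essentially the same route as the paper: (A) by observing $v=\omega^{1/p}\in\Omega$ since $F/p$ still satisfies $(F1)$--$(F4)$, then applying Theorem~\ref{Th4.4} to $v$ and transferring to $v^p-|v_n|^p$ via the elementary power inequality $|a^p-b^p|\le C_p|a-b|^{\min(p,1)}$; (B) and (C) by running the argument of Theorem~\ref{Th4.6} with the extra factor $u$, invoking the restricted Remez hypothesis on the set where $|g|<x_1(n)$. The one cosmetic difference is that the paper (Remark~\ref{remark6.1}) enlarges $B_{x_1(n)}$ to a union $\widehat{B}_{x_1(n)}$ of $m$ intervals of length $>1/n$, whereas you argue directly that the components of $B_{x_1(n)}$ already have length $\gtrsim x_1(n)\gg 1/n$ because the zeros of $g$ are simple; both observations are valid and lead to the same application of the $\mathcal{R}_{int}(p)$ property.
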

\begin{proof} We may assume that $n$ is large enough.
For any $\omega=\exp\big({-F(g(t))}\big)\in \Omega$ and for any $p\in
(0, \infty)$ we have, by definition of the class $\Omega$,
$$v(t)=\omega^{1/p}(t)=\exp\big({-H(g(t))}\big)\in \Omega, \qquad t\in\T,$$
where $H(x)=F(x)/p$ satisfies $(F_1)-(F_4)$. Moreover, by Corollary
\ref{Lemma2.5}
$$x_1^\omega(n)\asymp x_1^v(n),$$
where
 $x_1^\omega(n)$ is a
unique positive solution of the equation $F(x_1^\omega(n))=nx_1^\omega(n)$ and
 $x_1^v(n)$ is a
unique positive solution of the equation $H(x_1^v(n))=nx_1^v(n)$.

%{\bf (A)}.
% For  $v=\omega^{1/p}\in \Omega$ we have
To verify (\ref{2zv}) and (\ref{3zv}), we use Theorem \ref{Th4.4} and the following inequality:
\begin{equation}\label{latter} \Big|v^p(t)-|v_{Kn}(t)|^p\Big|  \le C(p,
\omega) \Big|v(t)-v_{Kn}(t)\Big|^{\min\{1,p\}},\qquad 0<p<\infty,
\quad t\in \T.
\end{equation}
For $0<p<1$, the latter follows from the inequality $|a^p-b^p|\le
C(p)|a-b|^p,$ where $a,b\ge 0$. For $p>1$, we get~\eqref{latter}
using the fact that if $a>b>0$ then ${a^p-b^p}=p\xi^{p-1}(a-b)$ for
some $\xi\in (b,a)$. Thus, the proof of part {(A)} is complete.

To show {(B)} and {(C)}, we follow the proof of Theorem \ref{Th4.6} using (\ref{2zv}) and the following remark.

\begin{remark}\label{remark6.1}
{%\textnormal
\it{
 %Concerning the Remez inequality which we use
In the proofs of Theorems \ref{Th4.4} and \ref{Th4.6}, we use the Remez inequalities  only for the set
$$
B_{x_1(n)}=\Big\{t\in \T:|g(t)|< x_1(n)\Big\}.
$$
Analyzing the proof of Lemma \ref{Lemma4.5}, we note that
there exists $\widehat{B}_{x_1(n)}\subset\T$ such that ${B_{x_1(n)}}\subseteq \widehat{B}_{x_1(n)}$, $|\widehat{B}_{x_1(n)}|\le C x_1(n)$,
and
$\widehat{B}_{x_1(n)}$ is a union of $m$  intervals of length $>1/n$, where $m$ is a number of zeros of $g$ on $\T$.
Therefore, in the proof of Theorems \ref{Th4.4} and \ref{Th4.6} we can apply the Remez inequality for the set $\widehat{B}_{x_1(n)}$.
%Thus, taking $\widehat{B}_n$ instead of ${B}_n$ we can use inequality (\ref{cl-a}) by Mastroianni-Totik-Erd\'{e}lyi with the doubling weight $u$.
%??
}}
\end{remark}
\end{proof}

In this section we prove the following general Remez inequality in $L_p$.
\begin{theorem}\label{remez-theorem}
 Let $0<p\le\infty$, $\omega\in \Omega$, and $u\in \mathcal{R}(p)$.  Then for each trigonometric polynomial $T_n$ we have
\begin{equation}
\label{remez*} \|T_n\|_{L_p(\omega u)} \le \, \exp({C n |E|})\,
\|T_n\|_{L_p(\omega u, \T\backslash E)},
\end{equation}
where $C=C(\omega, u, p)$ and $E$ is a measurable set of positive
measure $|E|\le 1$.
\end{theorem}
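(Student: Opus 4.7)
The plan is to reduce the claim to the Remez hypothesis $u\in\mathcal{R}(p)$ by introducing the trigonometric polynomial $v_{Kn}$, the $Kn$-th Fourier partial sum of $v:=\omega^{1/p}$, where $K=K(\omega,u,p)$ will be specified below. Since $\mathcal{R}(p)\subset \mathcal{R}_{int}(p)$, Lemma~\ref{lemma-v^p}(B) gives
\[
\int_\T |T_n|^p\omega u \le 2\int_\T |T_n v_{Kn}|^p u,
\]
and the product $T_n v_{Kn}$ is a trigonometric polynomial of degree at most $(K+1)n$. Applying the Remez hypothesis for $u$ to it yields
\[
\int_\T |T_n v_{Kn}|^p u \le \exp\bigl(Cp(K+1)n|E|\bigr)\int_{\T\setminus E}|T_n v_{Kn}|^p u.
\]

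Next I would bound $\int_{\T\setminus E}|T_n v_{Kn}|^p u$ by $\int_{\T\setminus E}|T_n|^p\omega u$ plus a small error. Lemma~\ref{lemma-v^p}(A) gives the uniform estimate $\bigl\||v_{Kn}|^p-\omega\bigr\|_{C(\T)}\le \exp(-cKnx_1(Kn))$, whence
\[
\int_{\T\setminus E}|T_n v_{Kn}|^p u \le \int_{\T\setminus E}|T_n|^p\omega u + \exp(-cKnx_1(Kn))\int_\T|T_n|^p u.
\]
The residual integral $\int_\T|T_n|^p u$ is controlled by a second application of the Remez hypothesis for $u$, this time with hole $E\cup B_{x_1(n)}$, where $B_{x_1(n)}:=\{t:|g(t)|<x_1(n)\}$ has measure at most $Cx_1(n)$ by Lemma~\ref{Lemma4.5}. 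Combined with the pointwise lower bound $\omega(t)\ge \exp(-nx_1(n))$ valid on $\T\setminus B_{x_1(n)}$, this produces
\[
\int_\T |T_n|^p u \le \exp\bigl(Cpn|E|+C(p+1)nx_1(n)\bigr)\int_{\T\setminus E}|T_n|^p\omega u.
\]

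The principal obstacle is the calibration of $K$: it must be large enough that the exponentially small approximation error $\exp(-cKnx_1(Kn))$ overwhelms the auxiliary factor $\exp(C(p+1)nx_1(n))$ picked up from the hole $B_{x_1(n)}$, yet must remain a constant depending only on $\omega,u,p$. Corollary~\ref{Cor4.1} provides precisely this flexibility: for any $M$ one can select $K=K(\omega,u,p)$ such that $cKnx_1(Kn)\ge Mnx_1(n)$ for all $n$ large. Taking $M\ge C(p+1)+\log 4$ then reduces the error contribution to at most $\tfrac14 \exp(Cpn|E|)\int_{\T\setminus E}|T_n|^p\omega u$, and chaining the displayed inequalities delivers
\[
\int_\T |T_n|^p\omega u \le C_0 \exp\bigl(Cp(K+2)n|E|\bigr)\int_{\T\setminus E}|T_n|^p\omega u,
\]
which is the desired Remez inequality with $C=C(\omega,u,p)$. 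The case $p=\infty$ proceeds identically using Lemma~\ref{lemma-v^p}(C), with $\omega_{Kn}$ in place of $v_{Kn}^p$ and essential suprema in place of integrals. The only minor technical caveat is that the hypothesis $u\in\mathcal{R}(p)$ requires $|E\cup B_{x_1(n)}|\le 1$; this holds whenever $|E|\le 1-Cx_1(n)$ and $n$ is large, while the remaining narrow range of $|E|$ can be handled by splitting $E$ into two pieces of measure at most $1/2$ and applying the bound twice.
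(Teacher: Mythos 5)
Your proof is correct and follows the same circle of ideas as the paper's: approximate $\omega$ by $|v_{Kn}|^p$, apply the Remez hypothesis to $T_n v_{Kn}$, peel off the approximation error, and kill it with a second Remez application (hole $E\cup B_{x_1(n)}$) together with the lower bound $\omega\ge\exp(-nx_1(n))$ off $B_{x_1(n)}$ and Corollary~\ref{Cor4.1} to calibrate $K$. The one structural difference is that the paper splits into the cases $|B_{x_1(n)}|\le|E|$ and $|B_{x_1(n)}|>|E|$: in the first case it enlarges the hole to $E\cup B_{x_1(n)}$ in the \emph{initial} Remez application and then finishes with the pointwise comparison $|v^p-|v_{Kn}|^p|\le\omega$ on $\T\setminus B_{x_1(n)}$, with no second Remez needed. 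Your unified treatment shows this dichotomy is dispensable, since the $\exp(Cn|B_{x_1(n)}|)$ factor from the second Remez is always $\le\exp(C'nx_1(n))$ and is therefore dominated by $\exp(-cKnx_1(Kn))$ for a fixed large $K$, regardless of the relative sizes of $|E|$ and $|B_{x_1(n)}|$; this is a genuine, if modest, streamlining. One small caution: the fix you sketch for the regime $1-Cx_1(n)<|E|\le 1$ (splitting $E$ into two halves and applying the bound twice) does not quite close as stated, because after removing $E_1$ the Remez hypothesis is no longer available on $\T\setminus E_1$; however, the paper itself does not address this measure-budget issue either, and in practice it is handled by noting that the $\mathcal{R}(p)$ condition extends to $|E|\le 1+\delta$ for the weights under consideration, or simply by working with the upper bound $|E|\le 1/2$ in the definition.
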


Since any $A_\infty$  weight $u$ satisfies the $\mathcal{R}(p)$
condition, $0<p<\infty$ (see \cite[Th. 5.2]{mas1} and \cite[Th.
7.2]{erd}) and any $A^*$-weight $u$ satisfies the
$\mathcal{R}(\infty)$ condition (see \cite[(6.10)]{mas1}), Theorem
\ref{remez-theorem} immediately implies the following result.

%The following result  follows from this theorem and \cite[Sect. 6]{mas1}, \cite{erd}.
\begin{corollary}\label{corcor}
For  $0<p<\infty$, the Remez inequality (\ref{remez*}) holds for any
measurable set  $E$, $|E|\le 1$ whenever  $\omega\in \Omega$ and
$u\in A_\infty$ and for $p=\infty$, whenever $\omega\in \Omega$ and $u\in A^*$. Moreover, applying
Theorem~\ref{remez-theorem} several times we obtain
inequality~\eqref{remez*} for the weight
$\omega=\omega_1\ldots\omega_s$, where $\omega_i\in\Omega$,
$i=1,\ldots,s$.
\end{corollary}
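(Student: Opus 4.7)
The plan is to reduce the weighted Remez inequality for $\omega u$ to the unweighted Remez property of $u$ via trigonometric approximation of $\omega$. I would first apply Lemma~\ref{lemma-v^p}(B) (part (C) for $p=\infty$) to bound $\int_\T |T_n|^p\omega u\,dt$ above by $2\int_\T |T_n v_{Kn}|^p u\,dt$, where $v=\omega^{1/p}$ and $v_{Kn}$ is its $Kn$-th partial Fourier sum. Since $T_n v_{Kn}$ is a trigonometric polynomial of degree $(K+1)n$, the hypothesis $u\in\mathcal{R}(p)$ then yields
\begin{equation*}
\int_\T |T_n v_{Kn}|^p u\,dt \le \exp\bigl(Cp(K+1)n|E|\bigr)\int_{\T\setminus E}|T_n v_{Kn}|^p u\,dt.
\end{equation*}

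The main obstacle will be to re-convert the right-hand side back to the original weight $\omega$ while integrating only over $\T\setminus E$, since Lemma~\ref{lemma-v^p} furnishes the equivalence $\omega\asymp|v_{Kn}|^p$ only in the global-integral sense. I would use the elementary pointwise estimate $|v_{Kn}(t)|^p\le C(p)\bigl(\omega(t)+\|v-v_{Kn}\|_{C(\T)}^p\bigr)$, which follows from $(a+b)^p\le 2^{(p-1)_+}(a^p+b^p)$, together with Lemma~\ref{lemma-v^p}(A), which controls $\|v-v_{Kn}\|_{C(\T)}^p$ by $\exp(-cpKnx_1(Kn))$. This decomposes $\int_{\T\setminus E}|T_n v_{Kn}|^p u$ into the desired $C(p)\int_{\T\setminus E}|T_n|^p\omega u$ plus an exponentially discounted error term of the form $\exp(-cpKnx_1(Kn))\int_{\T\setminus E}|T_n|^p u$.

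To dominate this error term, following the template used for $I_{21}$ in the proof of Theorem~\ref{Th5.1.}, I would apply the Remez inequality of $u$ with the exceptional set $E\cup B_{x_1(n)}$, where $B_{x_1(n)}=\{|g|<x_1(n)\}$ has measure $\le Cx_1(n)$ by Lemma~\ref{Lemma4.5} (a minor case split on the size of $|E|$ keeping the auxiliary set within the range allowed by the $\mathcal{R}(p)$ definition), and then invoke the pointwise bound $\omega(t)\ge\exp(-nx_1(n))$ on $\T\setminus B_{x_1(n)}$ from~\eqref{223} to revert to the original weight. The resulting auxiliary inequality
\begin{equation*}
\int_{\T\setminus E}|T_n|^p u\,dt \le \exp\bigl(Cpn|E|+Cnx_1(n)\bigr)\int_{\T\setminus E}|T_n|^p\omega u\,dt
\end{equation*}
combines with the approximation error to yield the overall exponent $-cpKnx_1(Kn)+Cpn|E|+Cnx_1(n)$. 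Corollary~\ref{Cor4.1} allows me to fix $K=K(\omega,u,p)$ so that $cpKnx_1(Kn)>Cnx_1(n)$ for all $n$ large, collapsing the residual exponential to $\exp(Cpn|E|)$, which can be absorbed into the target bound.

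Chaining the three inequalities---approximation, Remez for $u$, and the local reverse equivalence---and taking $p$-th roots then yields \eqref{remez*} for all $n$ large enough; the finitely many small-$n$ cases are absorbed into the constant as in Theorem~\ref{Th5.1.}. The case $p=\infty$ proceeds by the same scheme, using part (C) of Lemma~\ref{lemma-v^p}, the pointwise decomposition $\omega_{Kn}(t)\le\omega(t)+\|\omega-\omega_{Kn}\|_{C(\T)}$ in place of the $L^p$ decomposition, and the $\mathcal{R}(\infty)$ property of $u$. The technical crux throughout is the scaling provided by Corollary~\ref{Cor4.1}, which allows the reconstruction error to be absorbed into a constant factor depending only on $\omega$, $u$, and $p$.
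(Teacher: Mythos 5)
Your proposal is, in essence, a re-derivation of Theorem~\ref{remez-theorem} rather than a proof of Corollary~\ref{corcor}.  Reading it carefully, the entire argument starts from the phrase ``the hypothesis $u\in\mathcal{R}(p)$,'' but $u\in\mathcal{R}(p)$ is \emph{not} a hypothesis of the corollary: the corollary assumes $u\in A_\infty$ (for $p<\infty$) or $u\in A^*$ (for $p=\infty$).  The one nontrivial fact that the corollary actually needs---and which the paper supplies with explicit citations---is that $A_\infty$ weights satisfy the $\mathcal{R}(p)$ condition for $0<p<\infty$ (Mastroianni--Totik, Th.~5.2 and Erd\'elyi, Th.~7.2) and that $A^*$ weights satisfy $\mathcal{R}(\infty)$ (Mastroianni--Totik, (6.10)).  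Your write-up never mentions this link, so as it stands it proves a statement with the wrong hypothesis.  Once that link is supplied, Theorem~\ref{remez-theorem} already gives (\ref{remez*}) for $\omega u$, and there is no need to redo the approximation, pointwise decomposition, and error-absorption arguments.

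You also omit the second assertion of the corollary---the extension to products $\omega=\omega_1\cdots\omega_s$ with each $\omega_i\in\Omega$.  The intended argument is an iteration: applying Theorem~\ref{remez-theorem} with $\omega_1$ and the $\mathcal{R}(p)$ weight $u$ shows that $\omega_1 u\in\mathcal{R}(p)$; then apply the theorem again with $\omega_2$ and the weight $\omega_1 u$; and so on.  None of this appears in your proposal.

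For the record, the re-derivation itself is sound and does shadow the paper's proof of Theorem~\ref{remez-theorem}: Lemma~\ref{lemma-v^p} to pass to $|v_{Kn}|^p$, the Remez property of $u$, the pointwise reconstruction $|v_{Kn}|^p\le\omega+|v^p-|v_{Kn}|^p|$, a second Remez application on $E\cup B_{x_1(n)}$ together with the lower bound $\omega\ge\exp(-nx_1(n))$ on $\T\setminus B_{x_1(n)}$, and Corollary~\ref{Cor4.1} to tune $K$.  In effect you run the paper's Case 2 argument uniformly (the paper separates $|B|\le|E|$ from $|B|>|E|$), which is fine.  But since the corollary is intended to be a direct consequence of the theorem, this work is unnecessary, and it does not compensate for the two genuine gaps above.
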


 Conditions on the weight $u$ in Corollary \ref{corcor} can be relaxed in the case when  the set $E$ is a finite union of intervals.
 First, we give an analogue of Theorem \ref{remez-theorem} for this case.

\begin{theorem}\label{remez-theorem-1}
 Let $0<p\le\infty$, $\omega\in \Omega$, and $u\in \mathcal{R}_{int}(p)$.  Then for each trigonometric polynomial $T_n$ we have
\begin{equation}
\label{remez***} \|T_n\|_{L_p(\omega u)} \le \, \exp({C n |E|})\,
\|T_n\|_{L_p(\omega u, \T\backslash E)},
\end{equation}
where $C=C(\omega, u, p)$ and $E$ is a finite union of intervals of
length $\ge 1/n$.
\end{theorem}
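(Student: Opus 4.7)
The strategy is to use the polynomial approximation of $\omega$ provided by Lemma~\ref{lemma-v^p} to absorb the nondoubling factor $\omega$ into the polynomial $T_n$, thereby reducing the problem to a Remez inequality for $u$ alone, which is exactly the standing assumption $u\in\mathcal{R}_{int}(p)$. I describe the case $0<p<\infty$; the case $p=\infty$ is handled identically using part (C) of the lemma in place of part (B).

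Set $v=\omega^{1/p}\in\Omega$ and let $v_{Kn}$ denote the $Kn$-th partial Fourier sum of $v$, for an integer $K=K(\omega,u,p)$ to be fixed below. By Lemma~\ref{lemma-v^p}(B),
$$
\int_\T |T_n|^p\omega u\,dt\le 2\int_\T |T_n v_{Kn}|^p u\,dt.
$$
Now $T_n v_{Kn}$ is a trigonometric polynomial of degree at most $(K+1)n$, and $E$ is a union of intervals of length $\ge 1/n\ge 1/((K+1)n)$, so the hypothesis $u\in\mathcal{R}_{int}(p)$ applies to $T_nv_{Kn}$ and produces
$$
\int_\T |T_n v_{Kn}|^p u\,dt\le \exp\bigl(pC_1(K+1)n|E|\bigr)\int_{\T\setminus E}|T_n v_{Kn}|^p u\,dt,
$$
with $C_1=C_1(p,u)$.

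The delicate point is to undo the substitution on the complement. Using the uniform bound $\bigl\||v_{Kn}|^p-v^p\bigr\|_{C(\T)}\le\exp(-cKnx_1(Kn))$ from Lemma~\ref{lemma-v^p}(A), we have
$$
\int_{\T\setminus E}|T_nv_{Kn}|^p u\,dt\le \int_{\T\setminus E}|T_n|^p\omega u\,dt+\exp\bigl(-cKnx_1(Kn)\bigr)\int_{\T\setminus E}|T_n|^p u\,dt.
$$
To estimate the last integral I apply the Remez inequality for $u\in\mathcal{R}_{int}(p)$ to $T_n$ with the enlarged set $E\cup\widehat{B}_{x_1(n)}$, which by Lemma~\ref{Lemma4.5} and Remark~\ref{remark6.1} is again a finite union of intervals of length $\ge 1/n$ of total measure at most $|E|+Cx_1(n)$, and combine this with the pointwise bound $\omega(t)\ge \exp(-nx_1(n))$ on $\T\setminus\widehat{B}_{x_1(n)}$ (cf.~\eqref{223}) to conclude
$$
\int_{\T\setminus E}|T_n|^p u\,dt\le \exp\bigl(pC_1 n|E|+C_2 nx_1(n)\bigr)\int_{\T\setminus E}|T_n|^p\omega u\,dt
$$
for some $C_2=C_2(\omega,u,p)$.

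Chaining the three bounds gives
$$
\int_\T |T_n|^p\omega u\,dt\le 2\exp\bigl(pC_1(K+1)n|E|\bigr)\Bigl[1+\exp\bigl(pC_1 n|E|+C_2 nx_1(n)-cKnx_1(Kn)\bigr)\Bigr]\int_{\T\setminus E}|T_n|^p\omega u\,dt.
$$
By Corollary~\ref{Cor4.1}, we can choose $K$ so large that $cKnx_1(Kn)\ge C_2 nx_1(n)+1$ for all sufficiently large $n$; then the bracket is at most $2\exp(pC_1 n|E|)$. Taking $p$-th roots and using $n|E|\ge 1$ to absorb the numerical constant into the exponent yields~\eqref{remez***}. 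The main obstacle is precisely the coupling in the last step: the Fourier-truncation error has to be absorbed simultaneously against a Remez factor for $u$ on a different set, and this closes only because Corollary~\ref{Cor4.1} allows $Knx_1(Kn)$ to dominate any prescribed multiple of $nx_1(n)$ for $K$ large enough.
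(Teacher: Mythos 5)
Your proof is correct and follows essentially the same path as the paper's argument (the paper proves Theorem~\ref{remez-theorem} and then observes that Theorem~\ref{remez-theorem-1} follows by the same argument with Remark~\ref{remark6.1}). The one organizational difference is that the paper splits its proof into two cases, $|B|\le|E|$ and $|B|>|E|$, using a single Remez application on $E\cup B$ in the first case and a two-stage Remez argument in the second; your write-up is exactly the paper's Case~2 argument applied uniformly. This is a small streamlining: the second Remez application on $E\cup\widehat{B}_{x_1(n)}$ plus the pointwise lower bound $\omega\ge\exp(-nx_1(n))$ off $\widehat{B}_{x_1(n)}$ handles both regimes at once, at the cost of an extra factor $\exp(C_2\, n x_1(n))$ that Corollary~\ref{Cor4.1} lets you dominate by choosing $K$ large, just as in the paper. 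Your explicit remark that $n|E|\ge 1$ (forced by the interval-length hypothesis) is needed to absorb the multiplicative constants into the exponential is a point the paper glosses over but is indeed what makes the final statement clean; it is worth noting that this absorption would not be automatic for the unrestricted Theorem~\ref{remez-theorem}, where $|E|$ may be arbitrarily small.
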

In particular, this and \cite[Th. 5.3]{mas1} give a refinement  of
Corollary \ref{corcor} for such sets $E$.

\begin{corollary}
\label{remez-int} For  $0<p<\infty$ the Remez inequality (\ref{remez***}) holds whenever  $\omega\in \Omega$,  $u$ is
doubling, and $E$ is a union of intervals of length  $\ge 1/n$. Moreover, applying Theorem~\ref{remez-theorem-1} several
times we obtain inequality~\eqref{remez***} for the weight
$\omega=\omega_1\ldots\omega_s$, where $\omega_i\in\Omega$,
$i=1,\ldots, s$.
\end{corollary}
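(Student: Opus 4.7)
The plan is to reduce (\ref{remez***}) to the ordinary Remez inequality for $u$ via the polynomial approximation of $\omega^{1/p}$, following the scheme used in the proof of Theorem \ref{Th5.1.}. I describe the case $0<p<\infty$; the case $p=\infty$ is parallel, with the pointwise estimate (\ref{5005}) in place of (\ref{500}), the partial sum $\omega_{Kn}$ in place of $v_{Kn}$, and essential suprema in place of integrals.

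First, fix $K=K(\omega,u,p)$ from Lemma \ref{lemma-v^p}(B), so that
$$
\int_\T |T_n|^p\omega u\,dt \;\le\; 2\int_\T|T_n v_{Kn}|^p u\,dt,
$$
where $v_{Kn}$ is the $Kn$-th partial Fourier sum of $v=\omega^{1/p}$. The product $T_n v_{Kn}$ is a trigonometric polynomial of degree at most $(K+1)n$, and since $E$ is a finite union of intervals of length $\ge 1/n\ge 1/((K+1)n)$, the hypothesis $u\in\mathcal{R}_{int}(p)$ (applied via the scaling remark following its definition) yields
$$
\int_\T|T_n v_{Kn}|^p u\,dt \;\le\; \exp\bigl(C(K+1)n|E|\bigr)\int_{\T\setminus E}|T_n v_{Kn}|^p u\,dt.
$$
Using the elementary bound $|v_{Kn}|^p\le \omega+\bigl||v_{Kn}|^p-\omega\bigr|$, this last integral splits into the desired main term $\int_{\T\setminus E}|T_n|^p\omega u\,dt$ plus an error
$$
\mathcal{E}\;:=\;\int_{\T\setminus E}|T_n|^p\bigl||v_{Kn}|^p-\omega\bigr|u\,dt \;\le\; \exp\bigl(-c Kn x_1(Kn)\bigr)\int_{\T\setminus E}|T_n|^p u\,dt,
$$
where the bound on $\bigl||v_{Kn}|^p-\omega\bigr|$ comes from the uniform approximation estimate in Lemma \ref{lemma-v^p}(A).

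The main obstacle is to pass the unweighted integral $\int_{\T\setminus E}|T_n|^p u$ back to the $\omega u$-weighted one, since $\omega$ is very small on the set $\widehat{B}_{x_1(n)}$ from Remark \ref{remark6.1}. I resolve this by applying the Remez inequality for $u$ a second time, now to $T_n$ with the exceptional set $E\cup\widehat{B}_{x_1(n)}$, which is still a finite union of intervals of length $\ge 1/n$ (by Remark \ref{remark6.1}) with measure $\le |E|+Cx_1(n)$ (by Lemma \ref{Lemma4.5}). On the complement $\T\setminus\widehat{B}_{x_1(n)}$ the lower bound (\ref{223}) gives $\omega\ge \exp(-nx_1(n))$, so altogether
$$
\mathcal{E}\;\le\; \exp(Cn|E|)\,\exp\bigl(-cKnx_1(Kn)+(C+1)nx_1(n)\bigr)\int_{\T\setminus E}|T_n|^p\omega u\,dt.
$$
This is precisely where the $\Omega$-structure enters: by Corollary \ref{Cor4.1} I can choose $K$ so large that $cKx_1(Kn)\ge (C+2)x_1(n)$ for all sufficiently large $n$, making the second exponential factor at most $\tfrac12$. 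With such $K$ fixed, $\mathcal{E}$ is absorbed into the main term, and chaining the three displays above and taking $p$-th roots gives (\ref{remez***}) for large $n$; the remaining bounded range of $n$ is handled by the finite-dimensionality of the space of trigonometric polynomials of fixed degree (so that all $L_p(\omega u)$-norms are equivalent to $L_p(\omega u,\T\setminus E)$-norms for $|E|\le 1$ up to a constant, which is absorbed into the exponent).
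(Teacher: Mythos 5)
Your argument does not derive the corollary from Theorem~\ref{remez-theorem-1}; instead it re-derives the content of Theorem~\ref{remez-theorem-1} for a single $\Omega$-factor from the approximation lemmas. The paper's own proof is essentially a citation: by \cite[Th.\ 5.3]{mas1} every doubling weight on $\T$ satisfies the restricted Remez condition, i.e.\ $u$ doubling $\Rightarrow u\in\mathcal{R}_{int}(p)$, and then Theorem~\ref{remez-theorem-1} gives (\ref{remez***}) directly, with the ``moreover'' clause obtained by iterating. What you have written is sound as far as it goes, and in fact it is a mild streamlining of the paper's proof of Theorem~\ref{remez-theorem}: instead of the case split on $|B|\le|E|$ versus $|B|>|E|$, you always bound the error $\mathcal{E}$ by a second application of the Remez inequality over $E\cup\widehat{B}_{x_1(n)}$ and kill the net exponential factor via Corollary~\ref{Cor4.1}. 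The use of Remark~\ref{remark6.1} to keep the exceptional set a union of intervals of length $\ge 1/n$, and the observation that enlarging $K$ is harmless, are both handled correctly.

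Two things are missing, however. First, you take $u\in\mathcal{R}_{int}(p)$ as a hypothesis, whereas the corollary only assumes $u$ is doubling; the bridge ``doubling $\Rightarrow\mathcal{R}_{int}(p)$'' must be invoked explicitly (the paper cites \cite[Th.\ 5.3]{mas1}, and \cite[Th.\ 7.2]{erd} for $0<p<1$). Second, the ``moreover'' clause for $\omega=\omega_1\cdots\omega_s$ is not addressed at all. It does not follow from your single-factor argument without an iteration step: one needs to observe that your conclusion can be re-read as ``$\omega_1 u\in\mathcal{R}_{int}(p)$'', so that the argument can be re-applied with $u$ replaced by $\omega_1 u$ and $\omega$ by $\omega_2$, and so on. That inductive reformulation is exactly what the paper packages in Theorem~\ref{remez-theorem-1}, which is why the paper proves that theorem once and then states the corollary as an immediate consequence.
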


\begin{proof}[Proof of Theorem \ref{remez-theorem}]
It is sufficient to show (\ref{remez*}) for $n$ large enough.
 Let first $p\in (0,\infty)$.
It follows from Lemma \ref{lemma-v^p} that
 %Using statement {\bf (B)} from Section 6 we have %. For any $\omega\in \Omega$ and for any $p\in (0, \infty)$ we have
 for $v=\omega^{1/p}\in \Omega$ we have
\begin{equation}\label{xixi}
 \|v^p-|v_n|^p\|_{C(\mathbb{T})}\le\exp{(-c(p,\omega)nx_1(n))},
\end{equation}
where $v_n$ is the $n$-th partial Fourier sum of the function $v$.
Moreover, by (\ref{500})
\begin{equation}\label{zv-new}
\int_{\T} |T_n|^p v^p\,u\, \asymp \int_{\T} |T_{n}|^p |v_{Kn}|^p\,u\
\end{equation}
for $K=K(\omega, u, p)$ large enough. Let us also remind that $$
B= B_{x_1(n)}=\Big\{t\in \T : \;|g(t)|\le x_1(n)\Big\}.
$$ \underline{Case 1.} Let $|B|\le |E|$.
Using (\ref{zv-new}) and (\ref{665-1}) for $u\in \mathcal{R}(p)$,
we have
\begin{eqnarray*}
\int_{\T} |T_n|^p \omega u&\le & \, \exp({C(p,u) K n (|E|+|B|)})\,
\int_{\T\setminus (E\cup B)} |T_n|^p |v_{Kn}|^p u
\\&\le&
\, \exp({C(p,u) K n |E|})\, \int_{\T\setminus (E\cup B)} |T_n|^p
|v_{Kn}|^p u.
\end{eqnarray*}
The latter can be estimated by $I_1+I_2$, where
\begin{eqnarray*}
I_1:= \exp({C(p,u) K n |E|})\, \int_{\T\setminus (E\cup B)} |T_n|^p
v^p u,
\end{eqnarray*}
and
\begin{eqnarray*}
I_2:= \exp({C(p,u) K n |E|})\, \int_{\T\setminus (E\cup B)} |T_n|^p
\Big|v^p -|v_{Kn}|^p\Big| u.
\end{eqnarray*}
Corollary \ref{Cor4.1} implies that, for any  $c>0$, there exists $K=K(c,\omega)$
such that $ x_1(n)< c Kx_1(Kn)$ and therefore $\exp({-cKn x_1(Kn) })\le
\exp({-n x_1(n)}).$ Then, by (\ref{xixi}) for $c=c(p,\omega)$,
\begin{eqnarray*}
\Big|v^p -|v_{Kn}|^p\Big| \le \exp({-cKn x_1(Kn) })\le \exp({-n x_1(n)})\le
\omega(t),
\qquad t\in \T\setminus B,
\end{eqnarray*}
 %for any $t\in \T\setminus B$,
 where the last inequality follows from (\ref{223}).
 Thus, $$I_1+I_2\le 2I_1\le 2
\exp({C(p,\omega, u)  n |E|})\, \int_{\T\setminus E} |T_n|^p \omega u.
$$
\underline{Case 2.} Let $|B|> |E|$. Similarly to  Case 1,  using
(\ref{665-1}), we get
\begin{eqnarray*}
\int_{\T} |T_n|^p \omega u&\le & I_1+I_2,
\end{eqnarray*}
 where
\begin{eqnarray*}
I_1:= \exp({C(p,u) K n |E|})\,\int_{\T\setminus E} |T_n|^p v^p u
\quad\text{and}\quad
%\end{eqnarray*}
%\begin{eqnarray*}
I_2:= \exp({C(p,u) K n |E|})\, \int_{\T\setminus E} |T_n|^p \big|v^p
-|v_{Kn}|^p\big| \, u
%\Big
.
\end{eqnarray*}
%and
%\begin{eqnarray*}
%I_2= \exp({C(p,u) K n |E|})\, \int_{\T\setminus E} |T_n|^p \Big|v^p
%-|v_{Kn}|^p\Big| u.\end{eqnarray*}
By (\ref{xixi}),
\begin{eqnarray*}
I_2\le  \exp({C(p,u) K n |E|})\, \exp({-c(p,\omega) Kn x_1(Kn) }) \int_{\T}
|T_n|^p  u.
\end{eqnarray*}
Applying again the Remez inequality (\ref{665-1}), we have
\begin{eqnarray*}
I_2\le  \exp({C(p,u) K n |E|})\, \exp({-c(p,\omega) Kn x_1(Kn) }) \exp({C(p, u)
n (|B|+|E|)}) \int_{\T\setminus (E\cup B)} |T_n|^p  u.
\end{eqnarray*}
Since $\omega(t)\ge \exp({- n x_1(n)})$, $t\in \T\setminus B$, we get
\begin{eqnarray*}
I_2&\le&  \exp({C(p,u) K n |E|})\, \exp({-c(p,\omega) Kn x_1(Kn) }) \exp({C(p,
u)  n |B|}) \exp({ n x_1(n)}) \int_{\T\setminus (E\cup B)} |T_n|^p
\omega u.
\end{eqnarray*}
Taking into account that $|B|\le C(\omega) x_1(n)$, we obtain that
$$\exp\Big({C(p,u) K n |E|} {-c(p,\omega) Kn x_1(Kn) }+ {C(p, u)  n |B|}+{ n x_1(n)}\Big)
 \le
\exp\Big(C(p,u) K n |E|\Big)
$$
 for $K=K(\omega,u,p)$ large enough. Thus,
\begin{eqnarray*}
I_2&\le& \,
 \exp(C(p,u) K n |E|)\,\int_{\T\setminus E} |T_n|^p
\omega u.
\end{eqnarray*}
Collecting estimates for $I_1$ and $I_2$, we arrive at
$$\int_{\T} |T_n|^p \omega u \le
\exp({C(p,\omega,u)  n |E|})\, \int_{\T\setminus E} |T_n|^p  \omega
u ,\qquad p\in (0,\infty),$$ which is the required inequality.

The proof in the case $p=\infty$ follows along the same lines as above and left for the reader.
\end{proof}

Proof of Theorem \ref{remez-theorem-1} is similar to the proof of
Theorem \ref{remez-theorem} thanks to Remark \ref{remark6.1}.

We now give the following important corollary of the Remez inequalities for the product of weights.
\begin{corollary}\label{cor-remez-remez}
Let $\omega=\omega_1\ldots\omega_s$, where $\omega_i\in\Omega$,
$i=1,\ldots,s$. Let also
  $0<p\le\infty$ and
  $u\in \mathcal{R}_{int}(p)$.
  Then
$$\int_\T|T_n|^p\omega u \asymp
\int_\T|T_n|^p |v^{(1)}_{Kn}|^p \cdots |v^{(s)}_{Kn}|^p u, \qquad 0<p<\infty,
$$
where $v^{(i)}_{n}$ is the $n$-th partial Fourier sum of
$\omega_i^{1/p}$, $i=1,\ldots,s$, and $K=K(\omega,u,p)$ is large
enough. Moreover,
$$
\|T_n \omega u \|_{L_\infty(\T)} \asymp \|T_n v^{(1)}_{Kn} \cdots
v^{(s)}_{Kn} u \|_{L_\infty(\T)}
$$
where $v^{(i)}_{n}$ is the $n$-th partial Fourier sum of $\omega_i$,
$i=1,\ldots,s$, and $K=K(\omega,u)$ is large enough.
\end{corollary}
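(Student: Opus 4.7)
The plan is to establish the corollary by induction on the number of weights $s$, iterating Lemma \ref{lemma-v^p}(B) in the case $0<p<\infty$ and Lemma \ref{lemma-v^p}(C) in the case $p=\infty$; only the former is described below, the latter being entirely analogous. The base case $s=1$ is precisely the corresponding statement of Lemma \ref{lemma-v^p}.

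For the inductive step from $s-1$ to $s$, a first preparatory observation is that the multiplier $u\omega_2\cdots\omega_s$ itself lies in $\mathcal{R}_{int}(p)$: this follows by iterated application of Theorem \ref{remez-theorem-1}, which adjoins one $\Omega$-factor at a time to a multiplier already in $\mathcal{R}_{int}(p)$. With this fact in hand, I would apply Lemma \ref{lemma-v^p}(B) to the polynomial $T_n$, the single $\Omega$-weight $\omega_1$, and the multiplier $u\omega_2\cdots\omega_s$, obtaining, for some $K_1=K_1(\omega,u,p)$,
$$
\int_\T |T_n|^p\omega u \ \asymp\ \int_\T |T_n|^p |v^{(1)}_{K_1 n}|^p\, u\omega_2\cdots\omega_s.
$$

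The key maneuver is then to absorb the Fourier-sum factor into the polynomial: setting $\widetilde T_n:=T_n\, v^{(1)}_{K_1 n}$, which is a trigonometric polynomial of degree at most $(K_1+1)n$ with $|T_n|^p|v^{(1)}_{K_1n}|^p=|\widetilde T_n|^p$, the right-hand side becomes $\int_\T |\widetilde T_n|^p u\omega_2\cdots\omega_s$, to which the inductive hypothesis applies with the $s-1$ weights $\omega_2,\ldots,\omega_s$ and multiplier $u$. This yields an equivalence with an integral of the form $\int_\T |\widetilde T_n|^p\prod_{j=2}^s |v^{(j)}_{K_2(K_1+1)n}|^p u$, for some $K_2=K_2(\omega_2,\ldots,\omega_s,u,p)$. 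Iterating the whole scheme $s$ times produces an equivalence of $\int_\T |T_n|^p\omega u$ with an integral involving partial Fourier sums at \emph{nonuniform} degrees $D_j n$, where $D_j = K_j\prod_{i<j}(K_i+1)$ accumulate multiplicatively along the recursion.

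The principal obstacle is precisely that the statement of the corollary insists on a single degree $Kn$ for every factor, whereas the natural induction produces the nonuniform $D_j n$. I would resolve this by exploiting the monotonicity of the approximation estimates \eqref{2zv} and \eqref{5005} driving Lemma \ref{lemma-v^p}, together with the analogous monotonicity of the Remez step in its proof: the error $\exp(-cK'n\,x_1(K'n))$ in \eqref{2zv} decreases in $K'$, hence the conclusion of Lemma \ref{lemma-v^p}(B) holds not only for the specific $K_0$ it produces but for every $K'\ge K_0$. Taking $K\ge \max_j D_j$ large enough so that all intermediate thresholds are simultaneously met, and then applying Lemma \ref{lemma-v^p}(B) once more to each factor in turn---with the already-approximated factors absorbed into a polynomial as above and the remaining $\Omega$-factors supplying, via Theorem \ref{remez-theorem-1}, an $\mathcal{R}_{int}(p)$ multiplier---lets one replace each $|v^{(j)}_{D_j n}|^p$ by $|v^{(j)}_{Kn}|^p$ without disturbing the equivalence, which yields the corollary in its stated form.
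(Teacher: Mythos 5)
Your outline matches the paper's (very terse) proof: induction on $s$, with Lemma \ref{lemma-v^p}(B) for each single $\Omega$-factor and Corollary \ref{remez-int} / Theorem \ref{remez-theorem-1} to verify that the partial products lie in $\mathcal{R}_{int}(p)$. You also correctly identify the subtle point the paper does not address, namely that the absorption $\widetilde T_n := T_n\, v^{(1)}_{K_1n}$ inflates the polynomial degree, so the naive induction produces Fourier sums at \emph{nonuniform} degrees $D_jn$.

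However, the proposed fix does not work. You argue that since the conclusion of Lemma \ref{lemma-v^p}(B) is monotone in $K$ (it holds for every $K'\ge K_0$), one can take a common $K$ and replace each $|v^{(j)}_{D_jn}|^p$ by $|v^{(j)}_{Kn}|^p$. But monotonicity only means the admissible Fourier-sum degrees form an \emph{up-set}: for a polynomial of degree at most $m$, the lemma yields equivalence with $v_{m'}$ for all $m'\ge K_0 m$, and cannot be used with $m'<K_0 m$. After $j-1$ factors (each at degree $Kn$) have been absorbed, the working polynomial has degree at least $(1+(j-1)K)n$, so the next application of Lemma \ref{lemma-v^p}(B) requires a Fourier sum at degree $\ge K_0(1+(j-1)K)n$, which for $j\ge 2$ and $K_0\ge 1$ is \emph{strictly greater} than $Kn$, however large $K$ is taken: the self-referential constraint $K\ge K_0(1+(j-1)K)$ has no solution. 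Enlarging $K$ inflates the absorbed degree by exactly the same factor, so there is no convergent choice. Thus the iteration does deliver $\int_\T|T_n|^p\omega u\asymp\int_\T|T_n|^p\prod_j|v^{(j)}_{D_jn}|^pu$ with $D_1<D_2<\cdots<D_s$, but not the stated version with a single $K$. (The nonuniform version is in fact adequate for the applications in the paper, e.g. in Theorem \ref{ttt222}, where only the order $\sim n$ of the Fourier-sum degrees matters; a genuine proof of the uniform statement needs a different argument, e.g. a direct telescoping bound on $\bigl|\prod_j|v^{(j)}_{Kn}|^p-\omega\bigr|$, expanding each $|v^{(i)}_{Kn}|^p=\omega_i+O(\delta_i)$ and applying the Remez inequality for the appropriate partial products $\prod_{i\notin S}\omega_i\,u$ with the adapted bad set $\bigcup_{i\in S\cup\{j\}}\widehat B^{(i)}_{x_1^{(i)}(n)}$, so that Corollary \ref{Cor4.1} can be invoked factor by factor.)
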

To prove this, we use induction, Lemma \ref{lemma-v^p} {\bf (B)}, and %Theorem \ref{remez-theorem}: any $\omega\in\Omega$ belongs to the class $\mathcal{R}_{int}(p)$.
 the following result provided by Corollary \ref{remez-int} for $p<\infty$ and Theorem \ref{remez-theorem-1} for $p=\infty$:
 if $\omega_i\in\Omega$ and $u\in \mathcal{R}_{int}(p)$, we have $\omega_1\ldots\omega_l u\in \mathcal{R}_{int}(p)$ for any integer $1\le l\le s-1$.

We finish this section by proving  the following Remez
inequality for algebraic polynomials $P_n$.
%For algebraic polynomials $P_n$ we get the following Remez inequality.

\begin{corollary}\label{corollary-remez}
 Let $0<p<\infty,$ $\omega=\omega_1\ldots\omega_s$, where
 $\omega_i(\cos t)\in \Omega$, $i=1,\ldots,s$.
Then the following inequality
\begin{equation}\label{vspom}
\|P_n\|_{L_p(\omega u, [-1,1])} \le \, \exp({C(p,\omega, u) n
\sqrt{|E|}})\, \|P_n\|_{L_p(\omega u, [-1,1]\backslash E)}
\end{equation}
holds for all measurable sets $E$ with $|E|\le 1/4$ and a weight
$u\in A_{\infty}$. For $p=\infty$, \eqref{vspom} holds for a weight
$u\in A^*$.
\end{corollary}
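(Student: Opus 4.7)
The plan is to reduce the algebraic inequality to the trigonometric Remez inequality from Corollary~\ref{corcor} via the classical substitution $x=\cos t$. Under this, any algebraic polynomial $P_n$ of degree at most $n$ becomes the even trigonometric polynomial $T_n(t):=P_n(\cos t)$ of degree at most $n$, and by the hypothesis each $\omega_i(\cos t)\in\Omega$, so $\omega(\cos t)=\omega_1(\cos t)\cdots\omega_s(\cos t)$ is precisely the type of weight handled by Corollary~\ref{corcor}. For $p<\infty$, using $dx=|\sin t|\,dt$, I would write
\[
\|P_n\|_{L_p(\omega u,[-1,1])}^{p}=\tfrac12\|T_n\|_{L_p(\omega^* u^*,\T)}^{p},
\]
where $\omega^*(t):=\omega(\cos t)$ and $u^*(t):=u(\cos t)|\sin t|$; for $p=\infty$ I would take $u^*(t):=u(\cos t)$ (no Jacobian needed for the essential supremum). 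Via the standard correspondence in the Mastroianni--Totik theory~\cite{mas1}, $u\in A_\infty[-1,1]$ implies $u(\cos t)|\sin t|\in A_\infty(\T)$, and $u\in A^*[-1,1]$ implies $u(\cos t)\in A^*(\T)$, placing $u^*$ in the setting of Corollary~\ref{corcor}.

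The crucial geometric input is the measure estimate
\[
|\tilde E|\le C\sqrt{|E|},\qquad \tilde E:=\{t\in[-\pi,\pi]:\cos t\in E\},
\]
which is the source of the square root in \eqref{vspom}. I would prove this by noting that $\tilde E$ is symmetric about $0$, so writing $A:=\tilde E\cap[0,\pi]$ with $|A|=m$, one has $|\tilde E|=2m$ and $|E|=\int_{A}\sin t\,dt$. For fixed $m$, this integral is minimized when $A$ concentrates near $\{0,\pi\}$ where $\sin t$ is smallest, i.e., when $A=[0,m/2]\cup[\pi-m/2,\pi]$, giving $|E|\ge 4\sin^2(m/4)\ge m^2/\pi^2$ by Jordan's inequality, and hence $|\tilde E|=2m\le 2\pi\sqrt{|E|}$.

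Applying Corollary~\ref{corcor} to $T_n$ on $\T$ with weight $\omega^* u^*$ and set $\tilde E$ (subdividing $E$ into finitely many pieces if necessary to bring each corresponding $|\tilde E_i|$ below the threshold at which Corollary~\ref{corcor} applies, and iterating the one-step Remez bound) yields
\[
\|T_n\|_{L_p(\omega^* u^*,\T)}\le \exp(Cn|\tilde E|)\,\|T_n\|_{L_p(\omega^* u^*,\T\setminus\tilde E)}\le \exp(C'n\sqrt{|E|})\,\|T_n\|_{L_p(\omega^* u^*,\T\setminus\tilde E)},
\]
and undoing the substitution then gives~\eqref{vspom}. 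The main obstacles I expect are the measure-transfer estimate (which is exactly where the $\sqrt{|E|}$ enters, in contrast to the linear $|E|$ on $\T$) and the transfer of the $A_\infty/A^*$ condition under $x=\cos t$; once these are in hand, the argument is a routine change of variables together with a direct invocation of the trigonometric result.
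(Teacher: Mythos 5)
Your proof is correct and follows essentially the same route as the paper: substitute $x=\cos t$, transfer the $A_\infty$ (resp.\ $A^*$) condition on $u$ via $u\mapsto u(\cos t)|\sin t|$ (resp.\ $u\mapsto u(\cos t)$), invoke Corollary~\ref{corcor} on $\T$, and bound $|\Phi^{-1}(E)|$ by a constant times $\sqrt{|E|}$. You go a bit further than the paper by supplying a rearrangement proof of the measure-transfer estimate (which the paper merely asserts, and with a slightly too-optimistic constant) and by flagging that one must keep $|\Phi^{-1}(E)|$ within the threshold of Corollary~\ref{corcor}, a genuine if minor point that the paper treats casually.
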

 \begin{proof}
To prove (\ref{vspom}), we use
change of variables $x=\cos t$, Corollary~\ref{corcor}, and the following two facts:
\begin{equation} \label{A-infty}
u\in A_{\infty}\quad \mbox{on}\; [-1,1] \quad \mbox{if and only if}\quad u(\cos t)|\sin t|\in
A_{\infty}\; \mbox{on}\; \T;
\end{equation}
 see \cite[p. 63]{mas1} and
 \begin{equation} \label{A-zvezda}
u\in A^*\quad \mbox{on}\; [-1,1] \quad \mbox{if and only if}\quad u(\cos t)\in A^*\; \mbox{on}\; \T;
\end{equation}
 see \cite[p. 68]{mas1}.

To conclude the proof, we remark that  for the map $\Phi(t)=\cos t$ and any measurable set $E\subset [-1,1]$ with $|E|\le 1/4,$ we get $|\Phi^{-1}(E)|\le 2 \sqrt{|E|}\le 1$.
\end{proof}

An analogue of Theorem \ref{remez-theorem-1} for algebraic polynomials can be written
similarly.

\vspace{6mm}

\section{Weighted Bernstein inequality in $L_p$}
\label{section-lp}
%\footnote{S: Bylo: Weighted Bernstein inequality in $L_p,$
%$0<p\le\infty$. Kak sejchas ne ochen nravitsja, no nichego luchshe
%ne pridumal.}
%\section{Weighted Bernstein inequality in $L_p,$ $0<p\le\infty$}
 %In this section we obtain the general weighted Bernstein inequality in $L_p$, $0<p\le \infty$.

The goal of this section is to establish the weighted Bernstein inequality in $L_p$ for the case of product of weights generalizing Theorem \ref{Th5.1.}.
%In this section, we generalize Theorem \ref{Th5.1.} for the case of product of weights.
 The proof combines the approximation technique that we used
in Theorem \ref{Th5.1.} and the Remez inequalities from Section \ref{section-remez}.

\begin{definition}
We say that a weight $u$ satisfies the $\mathcal{B}(p)$ condition, $0<p\le \infty$, and write
$u\in \mathcal{B}(p)$,
if for any trigonometric polynomial $T_n$ of degree at most $n$ the weighted  Bernstein inequality holds, that is,
\begin{equation}
\label{cl-b}
%\int_{\mathbb{T}}|T'_n(t)|^p u(t)\,dt\le C(n,p)n^p \int_{\mathbb{T}}|T_n(t)|^p u(t)\,dt.
\|T'_n\|_{L_p(u)}\le \,C(p,u)\, n \,\|T_n\|_{L_p(u)}.
\end{equation}
\end{definition}

\begin{theorem}
\label{Th5.1'}
Let $0<p\le\infty$, $\omega\in \Omega$, $u\in \mathcal{B}(p)\cap\mathcal{R}_{int}(p)$, then for any trigonometric polynomial $T_n$ of degree at most $n$ we have
\begin{equation}
\label{tri*}
\|T'_n\|_{L_p(\omega u)}
\le \,
C\,n\, \|T_n\|_{L_p(\omega u)},
%5 \int_{\mathbb{T}}|T'_n(t)|^p\omega(t)u(t)dt\le
%C\,n^p\int_{\mathbb{T}}|T_n(t)|^p\omega(t)u(t)dt,\qquad p<\infty,
\end{equation}
where $C=C(\omega, u, p)$.
%\begin{equation}
%\label{tri*2} \max_{\mathbb{T}}|T'_n(t)|\omega(t)u(t)\leC(\omega, u)\,n \max_{\mathbb{T}}|T_n(t)|y\omega(t)u(t),\qquad p=\infty.
%\end{equation}
\end{theorem}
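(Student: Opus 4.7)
My plan is to emulate the three-step structure of the $L_1$ proof of Theorem~\ref{Th5.1.}, with two crucial modifications: replace $\omega$ by $v := \omega^{1/p}$ and use its Fourier partial sum $v_{Kn}$ (so that the approximation produces a trigonometric polynomial whose $p$-th power tracks $\omega$), and absorb $u$ into every estimate via the two hypotheses $u\in \mathcal{B}(p)$ and $u\in \mathcal{R}_{int}(p)$. I will treat the case $0<p<\infty$ in detail; the case $p=\infty$ follows the same scheme with $v=\omega$ and Lemma~\ref{lemma-v^p}{(C)} replacing Lemma~\ref{lemma-v^p}{(B)}.

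First, by Lemma~\ref{lemma-v^p}{(B)} there exists $K=K(\omega,u,p)$ such that
$$
\int_{\T}|T_n'|^p\omega u \;\asymp\; \int_{\T}|T_n' v_{Kn}|^p u.
$$
Writing $T_n' v_{Kn}=(T_n v_{Kn})'-T_n v_{Kn}'$ and using $|a+b|^p\le C_p(|a|^p+|b|^p)$, the right-hand side is bounded by
$$
C_p\int_{\T}\bigl|(T_n v_{Kn})'\bigr|^p u \;+\; C_p\int_{\T}|T_n|^p|v_{Kn}'|^p u \;=:\; J_1+J_2.
$$
Since $T_n v_{Kn}$ is a trigonometric polynomial of degree at most $(K+1)n$ and $u\in\mathcal{B}(p)$, the classical weighted Bernstein inequality (\ref{cl-b}) gives $J_1\le C((K+1)n)^p\int|T_n v_{Kn}|^p u$, and the upper half of Lemma~\ref{lemma-v^p}{(B)} then converts this back to $C\,n^p\int|T_n|^p\omega u$.

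For $J_2$, the approximation bound $\|v'-v_{Kn}'\|_{C(\T)}\le\exp(-c(p,\omega)nx_1(n))$ from Lemma~\ref{lemma-v^p}{(A)} yields $|v_{Kn}'|^p\le C_p|v'|^p+C_p\exp(-c'nx_1(n))$. The second contribution $\exp(-c'nx_1(n))\int|T_n|^p u$ is absorbed exactly as in the $L_1$ proof: by Remark~\ref{remark6.1} the set $B_{x_1(n)}$ is contained in a finite union of intervals of length $>1/n$ with total measure $\le Cx_1(n)$, so applying the $\mathcal{R}_{int}(p)$-Remez inequality for $u$ and using $\omega\ge\exp(-nx_1(n))$ off $B_{x_1(n)}$ one obtains $\int|T_n|^p u\le\exp(Cnx_1(n))\int|T_n|^p\omega u$, and this is dominated by the exponentially small prefactor.

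It remains to control $\int|T_n|^p|v'|^p u$. From $v=\exp(-F(g)/p)$ one gets $|v'|^p=p^{-p}\omega|F'(g)g'|^p$, so the dichotomy used in Step~2 of the proof of Theorem~\ref{Th5.1.} applies verbatim. On $\T\setminus B_{n,M}$ the pointwise bound $|F'(g)g'|\le Mn$ yields $\int_{\T\setminus B_{n,M}}|T_n|^p|v'|^p u\le C(Mn)^p\int|T_n|^p\omega u$. On $B_{n,M}$, choosing $M,L,K$ as in Theorem~\ref{Th5.1.} one has $|g|<x_1(n)/L$ and hence (via $(F4)$, Lemma~\ref{Lemma1}, and Corollary~\ref{Cor4.2}) $|v'|^p\le\exp(-Knx_1(n)/2)$, after which Remez for $u\in\mathcal{R}_{int}(p)$ applied to $\widehat B_{x_1(n)}\supset B_{n,M}$ together with $\omega\ge\exp(-nx_1(n))$ outside $B_{x_1(n)}$ produces an absolute constant times $\int|T_n|^p\omega u$, provided $K$ is chosen sufficiently large. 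Combining the estimates for $J_1$ and $J_2$ yields (\ref{tri*}). The main obstacle is purely bookkeeping: choosing the constants $K,L,Q,M$ in the correct order so that the three exponential competitions ($\exp(-cKnx_1(Kn))$ from Fourier truncation, $\exp(Cnx_1(n))$ from Remez, $\exp(-Knx_1(n)/2)$ from the $B_{n,M}$ estimate) all close up simultaneously — this is precisely the choice made at the end of Theorem~\ref{Th5.1.} and the $L_p$ variant requires no new idea beyond tracking an extra factor of $u$ inside each Remez application, which is exactly what the hypothesis $u\in\mathcal{R}_{int}(p)$ provides.
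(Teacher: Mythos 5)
Your proposal is correct and follows essentially the same route as the paper: replace $\omega$ by $v=\omega^{1/p}$, use Lemma~\ref{lemma-v^p}(B) to pass to $|v_{Kn}|^p$, split $T_n'v_{Kn}=(T_nv_{Kn})'-T_nv_{Kn}'$, dispatch the first term via $u\in\mathcal{B}(p)$, split the second into a $|v'|$ piece and an approximation-error piece, and estimate both via the $B_{n,M}$ dichotomy and $\mathcal{R}_{int}(p)$-Remez applied to $\widehat B_{x_1(n)}$ exactly as in Steps 2--3 of the $L_1$ proof (Theorem~\ref{Th5.1.}) with Remark~\ref{remark6.1}. The paper states these last estimates more tersely (``follow $I_{21}$, $I_{22}$''), while you spell them out, but the ideas and the order of constant choices ($K,L,Q,M$) are the same.
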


\begin{proof}%[Proof of Theorem \ref{tri**}]
It is enough to prove (\ref{tri*}) for $n$ large enough.
We start with the case $0<p<\infty$.

First, by (\ref{500}), we have that, for some $K=K(\omega,u,p)$,
\begin{eqnarray*}
\int_{\T} |T_n'|^p \omega\,u\,
&\le& 2 \int_{\T} |T_n'|^p  |v_{Kn}|^p\,u\,
\\
&\le& 2^{1+p}\left(
\int_{\T} \Big|(T_n v_{Kn})'\Big|^p u\, +
\int_{\T} \Big|T_n v_{Kn}'\Big|^p u\, \right).
\end{eqnarray*}
Since $u\in \mathcal{B}(p)$, we get
$$%\begin{eqnarray*}
\int_{\T} \Big|(T_n v_{Kn})'\Big|^p u
\le
 C(\omega, u,p) \,n^p
\int_{\T} \Big|T_n v_{Kn}\Big|^p u
\le
 C(\omega, u,p) \,n^p
\int_{\T} \Big|T_n\Big|^p \omega u.
$$%\end{eqnarray*}
Also,
\begin{eqnarray*}
\int_{\T} \Big|T_n v_{Kn}'\Big|^p u\, \le
2^p\left(
\int_{\T} \Big|T_n v'\Big|^p u\, +
\int_{\T} \Big|T_n\Big|^p \Big| v'-v'_{Kn}\Big|^p u
\right).
\end{eqnarray*}
To conclude the proof, we follow estimates of $I_{21}$ and $I_{22}$ in the proof of Theorem \ref{Th5.1.} taking into account (\ref{3zv}).
Note that in view of Remark \ref{remark6.1} it suffices to assume $u\in \mathcal{R}_{int}(p)$.

Finally, we arrive at
\begin{eqnarray*}
\int_{\T} |T_n'|^p \, \omega\, u\,\le C(\omega, u, p) n^p\int_{\T} |T_n|^p \omega\, u.
\end{eqnarray*}

The proof for the case $p=\infty$ repeats the same lines as the proof in the case $0<p<\infty$
using Theorem \ref{Th4.4} and the inequality
\begin{equation}\label{rrr}
\frac12\| T_n \omega_{Kn} u\|_{L_{\infty(\T)}}\le \| T_n \omega
u\|_{L_{\infty(\T)}}\le 2 \| T_n \omega_{Kn} u\|_{L_{\infty(\T)}},
\end{equation}for $K$ large enough provided by Lemma \ref{lemma-v^p} (C).
First,
\begin{eqnarray*}
\| T_n' \omega u\|_{L_{\infty(\T)}}\le C \,\Big( \|(T_n
\omega_{Kn})' u\|_{L_{\infty(\T)}} + \|T_n \omega_{Kn}'
u\|_{L_{\infty(\T)}} \Big) \le C \,\Big( n \|T_n \omega_{Kn}
u\|_{L_{\infty(\T)}}+ \|T_n \omega_{Kn}' u\|_{L_{\infty(\T)}}\Big),
\end{eqnarray*}
where $C=C(\omega,u,p)$.
In view of (\ref{rrr}), $n \|T_n \omega_{Kn} u\|_{L_{\infty(\T)}}
\le 2 n \|T_n \omega u\|_{L_\infty(\T)}$. To estimate the second
term, we write
$$\|T_n \omega_{Kn}' u\|_{L_{\infty(\T)}}\le \Big(\esssup_{t\in B_{n,M}} + \esssup_{t\in \T\backslash B_{n,M}} \Big)|T_n(t) \omega_{Kn}'(t) u(t)|$$
and use Remez's inequality with $u\in \mathcal{R}_{int}(p)$ and Theorem \ref{Th4.4} to get $\|T_n
\omega_{Kn}' u\|_{L_{\infty(\T)}}\le C n \|T_n \omega
u\|_{L_\infty(\T)}$.
\end{proof}

%(\ref{w11}), (\ref{w22})

%In the next section we will use Theorem \ref{Th5.1'} to prove
%Theorem \ref{tri**}.

Now we are in position to prove Theorem \ref{tri**} stated in Introduction.

\begin{proof}[Proof of Theorem \ref{tri**}]
First, any doubling weight $u$ satisfies Bernstein's inequality
(\ref{cl-b}) for $0<p<\infty$ (see \cite[Th. 4.1]{mas1} and
\cite[Th. 3.1]{erd}). Concerning the restricted Remez inequality, (\ref{665}) holds
for any doubling weight $u$
(see \cite[Th. 7.2]{erd}) and %satisfies it (see \cite[Sect. 6]{mas1} and \cite[Th. 7.2]{erd}) and
 therefore,
$u\in \mathcal{B}(p)\cap\mathcal{R}_{int}(p)$, $0<p<\infty$. Then, by
Corollary~\ref{remez-int}
$\omega_1\ldots\omega_{s-1}u\in\mathcal{R}_{int}(p)$. Thus,
if $0<p<\infty$, the statement of Theorem \ref{tri**} follows from Theorem \ref{Th5.1'} by induction.

Let now $p=\infty$ and $u\in A^*$. Bernstein's inequality (\ref{cl-b}) is proved in \cite[(6.7)]{mas1} and Remez's inequality in \cite[(6.10)]{mas1}.
Therefore, $u\in A^*$ implies $u\in
\mathcal{B}(\infty)\cap\mathcal{R}_{int}(\infty)$. Similarly to the
 case $p<\infty$, Theorem \ref{tri**} immediately follows from
Corollary~\ref{corcor} and Theorem \ref{Th5.1'}.
\end{proof}

\vspace{6mm}

\section{Weighted Bernstein and Markov inequalities for algebraic polynomials
}\label{section-algebraic}
In this section, we deal with weights $\omega$ and $u : [-1,1] \to [0,\infty)$. The weight $u$ is either doubling or satisfies the $A^*$
condition on $[-1,1]$ which are defined similar to those on $\T$
(see, e.g., \cite[p. 62]{mas1}). First, we obtain the weighted
Bernstein inequality for algebraic polynomials on $[-1,1].$

\begin{theorem}\label{t71}
Let $0<p<\infty$, $\omega=\omega_1\ldots\omega_s$, where  $\omega_i(\cos t)\in \Omega$, $i=1,\ldots, s$,  and a weight $u$ is doubling.
Then
\begin{equation}\label{eq71}
\int_{-1}^1 \varphi^p(x) \,|P'_n(x)|^p\, {\omega}(x) u(x) \,dx
\le C(p, \omega, u)\, n^p
\int_{-1}^1 |P_n(x)|^p \,{\omega}(x) u(x) \,dx, \qquad \varphi(x)=\sqrt{1-x^2}.
\end{equation}
\end{theorem}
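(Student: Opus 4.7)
The natural plan is to reduce \eqref{eq71} to the trigonometric Bernstein inequality (Theorem \ref{tri**}) via the substitution $x=\cos t$. Setting $T_n(t):=P_n(\cos t)$ yields an even trigonometric polynomial of degree at most $n$, and differentiating gives $|T_n'(t)|=|\sin t|\,|P_n'(\cos t)|=\varphi(\cos t)\,|P_n'(\cos t)|$. This is precisely the identity that absorbs the factor $\varphi^p(x)$ appearing on the left-hand side of \eqref{eq71}, so one expects the weight $\varphi$ to disappear entirely after the change of variables.

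Carrying out the substitution $x=\cos t$ on $[0,\pi]$ and exploiting that $T_n$ is even, a direct computation gives
\begin{align*}
\int_{-1}^1 \varphi^p(x)\,|P_n'(x)|^p\,\omega(x)u(x)\,dx &= \tfrac{1}{2}\int_{\T}|T_n'(t)|^p\,\tilde{\omega}(t)\,\tilde{u}(t)\,dt,\\
\int_{-1}^1 |P_n(x)|^p\,\omega(x)u(x)\,dx &= \tfrac{1}{2}\int_{\T}|T_n(t)|^p\,\tilde{\omega}(t)\,\tilde{u}(t)\,dt,
\end{align*}
where $\tilde{\omega}(t):=\omega(\cos t)=\prod_{i=1}^{s}\omega_i(\cos t)$ and $\tilde{u}(t):=u(\cos t)|\sin t|$. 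Thus \eqref{eq71} is equivalent to a trigonometric Bernstein inequality on $\T$ for $T_n$ with weight $\tilde{\omega}\tilde{u}$. By hypothesis each factor $\omega_i(\cos t)$ belongs to $\Omega$, so $\tilde{\omega}$ is a finite product of $\Omega$-weights, exactly the situation covered by Theorem \ref{tri**}.

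The last ingredient needed is that $\tilde{u}=u(\cos t)|\sin t|$ is doubling on $\T$. This is precisely the doubling analogue of the equivalence \eqref{A-infty} used in the proof of Corollary \ref{corollary-remez}; namely, $u$ is doubling on $[-1,1]$ if and only if $u(\cos t)|\sin t|$ is doubling on $\T$. This is a standard fact from \cite{mas1}, and the presence of the Jacobian factor $|\sin t|$ is essential to match the Lebesgue behavior near the endpoints $\pm 1$. Granting this, Theorem \ref{tri**} (applied with $0<p<\infty$ and doubling $\tilde u$) furnishes
$$\int_{\T}|T_n'|^p\,\tilde{\omega}\tilde{u}\,dt \le C(p,\omega,u)\,n^p\int_{\T}|T_n|^p\,\tilde{\omega}\tilde{u}\,dt,$$
which, when translated back via the identities above, is exactly \eqref{eq71}.

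The only point requiring care — and what I would flag as the main obstacle — is the doubling equivalence for $u(\cos t)|\sin t|$: the factor $|\sin t|$ is a nondoubling weight by itself, yet its product with $u(\cos t)$ captures exactly the distortion of the Chebyshev change of variables and preserves the doubling class. Once this equivalence is cited (or verified directly by splitting intervals according to whether they meet a neighborhood of $t=0,\pm\pi$), the remainder of the argument is purely mechanical.
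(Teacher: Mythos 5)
Your proposal is correct and follows essentially the same route as the paper: change of variables $x=\cos t$ to absorb $\varphi$ into $|T_n'|$, the cited equivalence that $u$ doubling on $[-1,1]$ iff $u(\cos t)|\sin t|$ doubling on $\T$ (from \cite[p.~63]{mas1}), and then an application of Theorem~\ref{tri**}. Your extra commentary on why the Jacobian factor $|\sin t|$ is needed is sound but not a departure from the paper's argument.
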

 \begin{proof}
This result immediately follows from Theorem \ref{tri**}, change of variables $x=\cos t$, and  the fact that
$$
u\quad \mbox{is doubling on}\; [-1,1] \quad \mbox{if and only if}\quad u(\cos t)|\sin t|\quad \mbox{is doubling on}\; \T,
$$
 see \cite[p. 63]{mas1}.
%the fact that a weight $u$ is doubling on $[-1,1]$ if and only if $u(\cos t) |\sin t|$ is doubling on $\T$ (see \cite[p.
%63]{mas1}) and change of variables: $T_n(t)=P_n(\cos t)$.
%and the fact (\ref{A-infty}).
 \end{proof}
A counterpart for $p=\infty$ reads as follows.
\begin{theorem}%\label{t2}
Let $\omega=\omega_1\ldots\omega_s$, where  $\omega_i(\cos t)\in \Omega$, $i=1,\ldots, s$,  and $u\in A^*$.
Then
\begin{equation}\label{7.2}
\| \varphi\,P'_n\, {\omega} u\|_{L_\infty[-1,1]}
\le C( \omega, u)\, n
\| P_n\, {\omega} u\|_{L_\infty[-1,1]}.
\end{equation}
\end{theorem}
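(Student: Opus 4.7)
My plan is to follow the scheme of the proof of Theorem~\ref{t71}, reducing the algebraic inequality on $[-1,1]$ to the trigonometric Bernstein inequality of Theorem~\ref{tri**} via the change of variables $x=\cos t$. Set $T_n(t):=P_n(\cos t)$, which is an even trigonometric polynomial of degree at most $n$. Differentiating gives $T_n'(t)=-\sin t\cdot P_n'(\cos t)$, hence $|T_n'(t)|=\varphi(\cos t)\,|P_n'(\cos t)|$ with $\varphi(x)=\sqrt{1-x^2}$.

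Introduce the periodic weights $\tilde\omega(t):=\omega(\cos t)=\prod_{i=1}^{s}\omega_i(\cos t)$ and $\tilde u(t):=u(\cos t)$. By hypothesis each factor $\omega_i(\cos t)$ lies in $\Omega$, so $\tilde\omega$ is a product of $s$ weights from $\Omega$ on $\T$. Moreover, the equivalence~\eqref{A-zvezda} used in the proof of Corollary~\ref{corollary-remez} gives that $u\in A^*$ on $[-1,1]$ if and only if $u(\cos t)\in A^*$ on $\T$; hence $\tilde u\in A^*(\T)$. Unlike the $L_p$ case treated in Theorem~\ref{t71}, the Jacobian $|\sin t|$ of the substitution does not enter here, because we are dealing with an essential supremum rather than an integral: for every measurable $h:[-1,1]\to\mathbb{R}$ one has
\[
\esssup_{t\in\T}|h(\cos t)|=\esssup_{x\in[-1,1]}|h(x)|.
\]

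Applying Theorem~\ref{tri**} in $L_\infty(\T)$ to the trigonometric polynomial $T_n$ with the admissible weight $\tilde\omega\tilde u$ yields
\[
\|T_n'\,\tilde\omega\,\tilde u\|_{L_\infty(\T)}\le C(\omega,u)\,n\,\|T_n\,\tilde\omega\,\tilde u\|_{L_\infty(\T)}.
\]
Translating both sides back to $[-1,1]$ by means of the identity $|T_n'(t)|=\varphi(\cos t)|P_n'(\cos t)|$ and the above change-of-variables identity for essential suprema produces precisely~\eqref{7.2}. The only point requiring an external input is that the class $A^*$ transfers correctly under $x=\cos t$, which is the content of~\eqref{A-zvezda}; beyond this, the argument is a direct transcription of the proof of Theorem~\ref{t71}, so no genuine obstacle is expected.
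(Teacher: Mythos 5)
Your proposal is correct and follows the same route the paper takes: change of variables $x=\cos t$, the $A^*$ transfer equivalence~\eqref{A-zvezda}, and Theorem~\ref{tri**} with $p=\infty$. You also correctly note that the Jacobian $|\sin t|$ plays no role in the $L_\infty$ case, which is exactly why the paper invokes~\eqref{A-zvezda} rather than the doubling-plus-Jacobian equivalence used in Theorem~\ref{t71}.
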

The proof is similar to the proof of Theorem \ref{t71} using the fact (\ref{A-zvezda}).
%that a weight $u\in A^*$ on $[-1,1]$ if and only if $u(\cos t) \in A^*$ on $\T$; see \cite[p. 68]{mas1}.

Let us now discuss Markov's inequality for algebraic polynomials.
\begin{theorem}\label{ttt222}
Let $0<p<\infty$, $\omega=\omega_1\ldots\omega_s$, where  $\omega_i(\cos t)\in \Omega$, $i=1,\ldots, s$, and a weight $u$ is doubling.
Then
\begin{equation}
\int_{-1}^1  \,|P'_n(x)|^p\, {\omega}(x) u(x) \,dx
\le C(p, \omega, u)\, n^{2p}
\int_{-1}^1 |P_n(x)|^p \,{\omega}(x) u(x) \,dx.
\end{equation}
\end{theorem}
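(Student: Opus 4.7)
The plan is to derive Markov's inequality from the Bernstein-type inequality established in Theorem~\ref{t71}, using the algebraic form of the weighted Remez inequality to absorb the pair of short intervals near $\pm 1$ where the factor $\varphi(x)=\sqrt{1-x^{2}}$ degenerates. Set $I_n=[-1+1/n^{2},\,1-1/n^{2}]$ and $E_n=[-1,1]\setminus I_n$. On $I_n$ one has $\varphi(x)\ge c/n$, hence $\varphi^{-p}(x)\le Cn^{p}$, so Theorem~\ref{t71} immediately yields
$$\int_{I_n}|P_n'|^p\omega u\,dx\le Cn^{p}\int_{-1}^{1}\varphi^{p}|P_n'|^p\omega u\,dx\le Cn^{2p}\int_{-1}^{1}|P_n|^p\omega u\,dx.$$

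The remaining task is to show that $\int_{-1}^{1}|P_n'|^p\omega u\,dx\le C\int_{I_n}|P_n'|^p\omega u\,dx$, i.e., to discard the endpoint contribution at bounded cost. I would do this by applying Corollary~\ref{remez-int} to the trigonometric polynomial $P_n'(\cos t)$ of degree $n-1$ on $\T$, with weight $\bigl(\prod_{i=1}^{s}\omega_i(\cos t)\bigr)\cdot u(\cos t)|\sin t|$: the first factor is a product of $\Omega$-weights by hypothesis, and the second is doubling on $\T$ by the same criterion \cite[p.~63]{mas1} used in the proof of Theorem~\ref{t71}. The preimage $\Phi^{-1}(E_n)$ under $\Phi(t)=\cos t$ is a union of two arcs in $\T$ of length $\asymp 1/n$, which meets the interval-length threshold of Corollary~\ref{remez-int} and satisfies $(n-1)|\Phi^{-1}(E_n)|=O(1)$; the Remez constant is therefore bounded independently of $n$. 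Transferring the resulting inequality back via $x=\cos t$ gives the desired reduction, and combining with the first display completes the proof.

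The only point requiring care is the matching of scales: the algebraic exceptional set $E_n$ of measure $\asymp 1/n^{2}$ corresponds precisely to trigonometric arcs of length $\asymp 1/n$, which is exactly the critical scale at which both the Bernstein-to-Markov conversion on $I_n$ produces the optimal factor $n^{2p}$ and the Remez constant stays $O(1)$. No substantive obstacle is anticipated: both Theorem~\ref{t71} (for the bulk) and Corollary~\ref{remez-int} (for the endpoints) have already been established, and the $\Omega$-weights $\omega_i(\cos t)$ are accommodated inside the Remez inequality itself, so their non-doubling behaviour never has to be handled directly in this argument.
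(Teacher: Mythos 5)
Your proof is correct, and it takes a genuinely different route from the paper's. The paper also begins with Theorem~\ref{t71} and reduces to showing $Cn^{p}\int_{\T}|T_{n}\sin t|^{p}\,\bar\omega\bar u\ge \int_{\T}|T_{n}|^{p}\,\bar\omega\bar u$ for $T_n(t)=P_n'(\cos t)$, but it then \emph{de-weights}: it invokes Corollary~\ref{cor-remez-remez} to replace the $\Omega$-factors $\bar\omega_i$ by powers of partial Fourier sums $|v^{(i)}_{Kn}|^{p}$, replaces the doubling factor $\bar u$ by a nonnegative trigonometric polynomial $\bar u_n^p$ using the Mastroianni--Totik / Erd\'elyi machinery, and finally lands on the unweighted Bary-type inequality $Cn^p\int_{\T}|T_n\sin t|^p\ge\int_{\T}|T_n|^p$. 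Your argument instead splits $[-1,1]$ into the bulk $I_n=[-1+1/n^2,\,1-1/n^2]$, where $\varphi^{-p}\le Cn^p$ lets Bernstein deliver the Markov bound directly, and the two endpoint intervals $E_n$, which you discard at bounded cost by applying Corollary~\ref{remez-int} on $\T$ to $P_n'(\cos t)$ (degree $n-1$) with exceptional set $\Phi^{-1}(E_n)$: two arcs of length $\asymp 1/n$, meeting the interval-length threshold and giving $(n-1)|\Phi^{-1}(E_n)|=O(1)$, hence a Remez constant bounded independently of $n$. Your route is more self-contained: it avoids both Corollary~\ref{cor-remez-remez} and the unweighted Bary inequality, at the price of the careful scale-matching remark you make explicit (the $1/n^2$ algebraic endpoint scale becomes exactly the critical $1/n$ trigonometric Remez scale). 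You also correctly use the interval-restricted Remez inequality (which needs only $u$ doubling) rather than the algebraic Remez Corollary~\ref{corollary-remez} (which would require $u\in A_\infty$, strictly stronger than the hypothesis of the theorem).
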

 \begin{proof}
First, applying the Bernstein inequality (\ref{eq71}),
\begin{equation*}
C
 n^{2p}
\int_{-1}^1 |P_n(x)|^p \,{\omega}(x) u(x) \,dx
\ge  n^{p}
 \int_{-1}^1 \varphi^p(x)  \,|P'_n(x)|^p\, {\omega}(x) u(x) \,dx.
\end{equation*}
Therefore, it is enough to show that
\begin{equation*}
C  n^{p}
 \int_{-1}^1 \varphi^p(x)  \,|P'_n(x)|^p\, {\omega}(x) u(x) \,dx\ge
 \int_{-1}^1   \,|P'_n(x)|^p\, {\omega}(x) u(x) \,dx,
\end{equation*}
or, taking  an even trigonometric polynomial $T_n(t)=P'_n(\cos t)$,
\begin{equation*}
C  n^{p}
 \int_{\T}   \,|T_n(t) \sin t|^p\, {\omega}(\cos t) u(\cos t) |\sin t|\,dt
 \ge   \int_{\T}   \,|T_n(t) |^p\, {\omega}(\cos t) u(\cos t) |\sin t|\,dt,
\end{equation*}
or, equivalently,
\begin{equation*}
C
  n^{p}
 \int_{\T}   \,|T_n(t) \sin t|^p\, \bar{\omega}(t) \bar{u}(t) \,dt
 \ge   \int_{\T}   \,|T_n(t) |^p\, \bar{\omega}(t) \bar{u}(t) \,dt,
\end{equation*}
where $\bar{\omega}(t)= \bar{\omega}_1(t)\ldots\bar{\omega}_s(t),$
$\bar{\omega}_i = \omega_i(\cos t) \in \Omega$, $i=1,\ldots, s$, and $\bar{u}(t) = u(\cos t) |\sin t|$ is doubling on $\T$.

Since any doubling weight $\bar{u}$ satisfies $\bar{u}\in \mathcal{R}_{int}(p)$, $0<p<\infty$,
using Corollary \ref{cor-remez-remez}, it remains to obtain
\begin{equation}\label{501}
C  n^{p} \int_{\T}   \,|T_n(t) \sin t|^p\, |v_{Kn}(t)|^p \bar{u}(t) \,dt
 \ge   \int_{\T}   \,|T_n(t) |^p\, |v_{Kn}(t)|^p \bar{u}(t) \,dt,
\end{equation}
where $v_{Kn}=v^{(1)}_{Kn}  \cdots v^{(s)}_{Kn}$ and $v^{(i)}_{n}$
is the $n$-th partial Fourier sum of $\bar{\omega_i}^{1/p}$.

Moreover, by Theorem 3.1 and Lemma 3.2 from the paper \cite{mas1} for $1\le p<\infty$ and Theorem 2.1 from the paper \cite{erd} for $0<p<1$,
it follows that for any doubling weight $\bar{u}$ there exists a nonnegative trigonometric polynomial $\bar{u}_n$
of degree at most $n$ such that
\begin{equation*}
 \int_{\T}   \,|T_n(t)|^p\,  \bar{u}(t) \,dt
\asymp  \int_{\T}   \,|T_n(t) |^p\, \bar{u}^p_n(t) \,dt,\qquad 0<p<\infty.
\end{equation*}
Then (\ref{501}) follows from
\begin{equation*}
C(p)  n^{p} \int_{\T}   \,|T_n(t) \sin t|^p\, dt
 \ge   \int_{\T}   \,|T_n(t) |^p\,dt
\end{equation*}
for any trigonometric polynomial $T_n$. The latter is known for $1\le p<\infty$  (see \cite[Theorem 1]{bar}) and
the proof in the case of $0<p<1$ is similar.
 \end{proof}
Markov's inequality for the case $p=\infty$ is written as follows.
\begin{theorem}
Let $\omega=\omega_1\ldots\omega_s$, where  $\omega_i(\cos t)\in \Omega$, $i=1,\ldots, s$,
 and  $u$ is an $A^*$ weight on $[-1,1]$. Then
\begin{equation*}
\| \,P'_n\, {\omega} u\|_{L_\infty[-1,1]}
\le C( \omega, u)\, n^2
\| P_n\, {\omega} u\|_{L_\infty[-1,1]}.
\end{equation*}
\end{theorem}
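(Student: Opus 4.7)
The plan is to mirror the proof of Theorem \ref{ttt222} but for $L_\infty$, combining the weighted Bernstein inequality \eqref{7.2} with a reduction that trades the factor $\varphi(x)=\sqrt{1-x^2}$ for an extra factor of $n$. Applying \eqref{7.2} gives
\begin{equation*}
\|\varphi P'_n \omega u\|_{L_\infty[-1,1]} \le C(\omega,u)\, n\, \|P_n \omega u\|_{L_\infty[-1,1]},
\end{equation*}
so it suffices to prove
\begin{equation*}
\|P'_n \omega u\|_{L_\infty[-1,1]} \le C n \|\varphi P'_n \omega u\|_{L_\infty[-1,1]}.
\end{equation*}
Substituting $x=\cos t$ and setting $T(t)=P'_n(\cos t)$, an even trigonometric polynomial of degree at most $n-1$, and using \eqref{A-zvezda} (so that $\bar u(t):=u(\cos t)$ is an $A^*$ weight on $\T$) together with $\bar\omega(t)=\omega(\cos t)=\bar\omega_1(t)\cdots \bar\omega_s(t)$ with $\bar\omega_i\in\Omega$, this reduces to
\begin{equation*}
\|T\, \bar\omega\, \bar u\|_{L_\infty(\T)} \le C n \|T\,\sin t\, \bar\omega\, \bar u\|_{L_\infty(\T)}.
\end{equation*}

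Next, I would invoke Corollary \ref{cor-remez-remez} for $p=\infty$ to replace $\bar\omega$ by the polynomial $V_{Kn}:=v^{(1)}_{Kn}\cdots v^{(s)}_{Kn}$ (where $v^{(i)}_{n}$ is the $n$-th partial Fourier sum of $\bar\omega_i$) for a suitable $K=K(\omega,u)$. This yields the equivalent assertion
\begin{equation*}
\|T\, V_{Kn}\, \bar u\|_{L_\infty(\T)} \le C n \|T\, V_{Kn}\, \sin t\cdot \bar u\|_{L_\infty(\T)}.
\end{equation*}
Since $R:=T\, V_{Kn}$ is a trigonometric polynomial of degree at most $Mn$ with $M=M(\omega,u)$, it remains to verify, for any trigonometric polynomial $R$ of degree at most $N$ and any $A^*$ weight $\bar u$ on $\T$, the Markov-type inequality
\begin{equation*}
\|R\, \bar u\|_{L_\infty(\T)} \le C(\bar u)\, N \|R\, \sin t\cdot \bar u\|_{L_\infty(\T)}.
\end{equation*}

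For the last step I would apply the $A^*$ counterpart from \cite{mas1} that furnishes a nonnegative trigonometric polynomial $\bar u^*_N$ of degree at most $N$ with $\bar u^*_N(t)\asymp \bar u(t)$ a.e.\ on $\T$. This replacement eliminates the weight $\bar u$ from both sides and reduces the claim to the classical unweighted estimate
\begin{equation*}
\|Q_L\|_{L_\infty(\T)} \le C L \|Q_L\, \sin t\|_{L_\infty(\T)},
\end{equation*}
valid for trigonometric polynomials $Q_L$ of degree at most $L$: indeed, at a point $t_0$ where $|Q_L|$ attains its maximum, either $|\sin t_0|\gtrsim 1$ (in which case the bound is immediate) or $|\sin t_0|$ is small; in the second case Bernstein's inequality $|Q'_L|\le L\|Q_L\|_\infty$ ensures that $|Q_L|\ge \tfrac12\|Q_L\|_\infty$ on an interval of length $\gtrsim 1/L$ centered at $t_0$, on which $|\sin t|$ is $\gtrsim c/L$ at some point, yielding the required lower bound on $\|Q_L\sin t\|_{L_\infty(\T)}$.

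The main obstacle I anticipate is the degree bookkeeping: the Fourier truncations $v^{(i)}_{Kn}$ from Corollary \ref{cor-remez-remez} inflate the degree by a multiplicative constant depending on $\omega$ and $u$, and one must check that the subsequent application of the $A^*$ polynomial majorant (applied at the inflated degree) still produces the correct linear-in-$n$ Markov factor so that, together with the initial Bernstein step, the final constant is proportional to $n^2$ and not to a higher power. Once the compatibility of the $A^*$ property with the polynomial majorant construction and the degree counts is verified, concatenating the two inequalities of order $n$ delivers the required $n^2$ bound.
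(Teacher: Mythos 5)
Your overall structure is the right one and matches what the paper intends: the paper's proof says to repeat the argument of Theorem \ref{ttt222} replacing the $L_p$ Bari inequality by $\|T_n\|_{C(\T)}\le (n+1)\|T_n\sin t\|_{C(\T)}$, and your steps 1--3 and 5--6 carry this out faithfully. The degree bookkeeping you worry about at the end is not actually a problem, since the partial sums only inflate the degree by a fixed multiple $K(\omega,u)$.

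However, there is a genuine gap in the step where you replace the $A^*$ weight $\bar u$ by a trigonometric polynomial with $\bar u^*_N(t)\asymp\bar u(t)$ a.e.\ on $\T$. Such a pointwise two-sided equivalence is false for general $A^*$ weights. For instance $\bar u(t)=|\sin t|^{a}$ with non-integer $a>0$ is an $A^*$ weight on $\T$, yet any trigonometric polynomial vanishes at $t=0$ to an integer order, so the ratio $\bar u^*_N(t)/\bar u(t)$ must blow up or degenerate as $t\to 0$; no uniform-in-$N$ pointwise equivalence is possible. What \cite{mas1} actually provides for $A^*$ weights is only a comparison in the \emph{norm} sense ($\|T_n\bar u\|_{L_\infty}\asymp\|T_n\bar u_N\|_{L_\infty}$ for trigonometric polynomials $T_n$ of degree $\lesssim N$), not a pointwise one, and the two sides of your target inequality involve polynomials of slightly different degrees, so one must also check that the corresponding approximants are comparable.

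The cleanest repair, staying entirely inside the paper's toolkit, is to drop the polynomial approximation of $\bar u$ altogether and run your Remez argument at the weighted level directly. You are trying to show $\|R\,\bar u\|_{L_\infty(\T)}\le C\,N\,\|R\,\sin t\,\bar u\|_{L_\infty(\T)}$ for $R=T\,V_{Kn}$ of degree $\le N\lesssim n$; apply Theorem \ref{remez-theorem-1} (or Corollary \ref{corcor}, $p=\infty$) with $E$ a union of two intervals of length $\asymp 1/N$ centred at $0$ and $\pi$ (where $\sin t$ is small). Since $\bar u\in A^*\subset\mathcal{R}_{\rm int}(\infty)$, the weighted Remez inequality gives $\|R\,\bar u\|_{L_\infty(\T)}\le e^{C}\,\|R\,\bar u\|_{L_\infty(\bar u,\T\setminus E)}$, and on $\T\setminus E$ one has $|\sin t|\gtrsim 1/N$, so $\|R\,\bar u\|_{L_\infty(\T\setminus E)}\le CN\,\|R\,\sin t\,\bar u\|_{L_\infty(\T)}$. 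This is exactly your step-6 argument, but executed with the weighted Remez inequality rather than attempting to strip off the weight first; with this substitution the proof is complete and matches the paper's.
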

The proof repeats the argument of the proof of Theorem \ref{ttt222} using the following inequality:
\begin{equation*}
\|T_n(t) \|_{C(\T)}\le (n+1) \|T_n(t) \sin t\|_{C(\T)};
\end{equation*}
see \cite{bern, bar}.
\begin{remark}\label{remark-lub}
{\textnormal{
Note that for some weights the Bernstein inequality (\ref{7.2}) for algebraic polynomials can be derived from
known results.
First, let us recall the definition of the Mhaskar-Rakhmanov-Saff number, which is
 a crucial concept to analyze weighted inequalities.
Let us suppose that $\omega(x)=\exp(Q(x)),$ where $Q:(-1,1)\to \mathbb{R}$  is even, and differentiable on $(0,1)$. Also suppose
that $xQ'(x)$ is positive and increasing in $(0,1)$ with limits zero and infinity at zero and 1, respectively, and
$$
\int_0^1\frac{xQ'(x)}{\sqrt{1-x^2}}dx=\infty.
$$
Then the $n$-th Mhaskar-Rakhmanov-Saff number, $a_n=a_n(Q)$, is defined to be the root of
$$n=
\frac2\pi\int_0^1 \frac{a_n xQ'(a_n x)}{\sqrt{1-x^2}}dx, \quad n\ge 1.
$$
The importance of this number lies in the Mhaskar-Saff identity
$$
\|P_n\omega\|_{C[-1,1]}
=
\|P_n\omega\|_{C[-a_n,a_n]}, \qquad n\ge 1,
$$
and asymptotically as $n\to\infty$, $a_n$ is the smallest such
number; see \cite{mh1, mh2}. In particular, for the weight
\begin{equation}\label{omega0}
\omega_\alpha(x)=\exp (-(1-x^2)^\alpha, \;\alpha>0
\end{equation}
we have
\begin{equation}\label{omega1}
1-a_n\asymp  n^{-1/(\alpha+\frac12)}, \qquad n\to \infty.
\end{equation}
For this weight,  Lubinsky and Saff proved the following inequalities \cite[p. 531]{lub-saff}:
\begin{equation}\label{lub1}
\Big| \,P'_n(x)\, {\omega_\alpha(x)} \sqrt{1-\frac{|x|}{a_n}}\Big|\le C(\alpha) n %_{L_\infty[-1,1]} \le C( \omega, u)\, n^2
\| P_n\, {\omega_\alpha} \|_{C[-1,1]}, \qquad |x|<a_n,
\end{equation}
and
\begin{equation}\label{lub2}
\| \,P'_n\, {\omega_\alpha} \|_{C[-1,1]}
\le C(\alpha) n^\frac{2\alpha+2}{2\alpha+1} %_{L_\infty[-1,1]} \le C( \omega, u)\, n^2
\| P_n\, {\omega_\alpha} \|_{C[-1,1]}.
\end{equation}
In fact, similar results hold for wide class of functions denoted by $\mathcal{W}$. By definition, $\omega=\exp({-Q})\in \mathcal{W}$, if
\begin{itemize}
  \item[(i)] \;\; $Q$ is even and continuously differentiable in $(-1,1)$, while $Q''$ is continuous in $(0,1)$;
  \item[(ii)]\;\;  $Q'\ge 0$ and $Q''\ge 0 $ in $(0,1)$;
  \item[(iii)] \;\; $\int_0^1 xQ'(x)/(\sqrt{1-x^2})\,dx=\infty$;
  \item[(iv)] \;\; for $T(x)=1+\frac{xQ''(x)}{Q'(x)}$, $x\in (0,1)$ one has:
$T$ is increasing in $(0,1)$, $T(0+)>1$, and\\ $T(x)=O(Q'(x))$, $x\in 1-.$
\end{itemize}
Let us show that both (\ref{lub1}) and (\ref{lub2}) imply
(\ref{7.2}) for $\omega_\alpha$ given by (\ref{omega0}) and
$u(x)\equiv 1$. Indeed, let $x\in (0,1)$. If $1-C^2
n^{-1/(\alpha+1/2)}\le x$ for some positive $C=C(\alpha)$, we have
$n^\frac{2\alpha+2}{2\alpha+1}\le 2 C\frac{n}{\sqrt{1-x^2}}$ and
(\ref{lub2}) implies
\begin{equation}\label{lub3}
| \,P'_n(x)\,\varphi(x) {\omega}_\alpha(x) |
\le C n %_{L_\infty[-1,1]} \le C( \omega, u)\, n^2
\| P_n\, {\omega_\alpha} \|_{C[-1,1]}
\end{equation}
for such $x$.
\\
If $x\le (C^2-1)/(C^2/a_n-1)$, then $\sqrt{1-x^2}\le 2C
\sqrt{1-\frac{|x|}{a_n}}$ and (\ref{lub1}) implies (\ref{lub3}) for
such $x$.
 Further, (\ref{omega1}) yields that $a_n>1-B n^{-1/(\alpha+1/2)}$ for some $B=B(\alpha)>0$.
Then, taking $C^2=2B+2$, we have
 $$1-C^2 n^{-1/(\alpha+1/2)}<(C^2-1)/(C^2/a_n-1)$$ for sufficiently large $n$.
 Finally, we have
$$\|P_n' \varphi \omega_\alpha\|_{C[-1,1]}\le C n\,\|P_n\omega_\alpha\|_{C[-1,1]}.$$
We also mention that in the recent papers \cite{in2,
in1} the authors obtained the weighted Bernstein, Nikolskii, and
Remez inequalities for  algebraic polynomials for the weights
$\omega(x)=\exp{(-(1-x^2)^\alpha)}u(x)$,\, $\alpha>0$, where $u$ is
doubling on $[-1,1]$. }}
\end{remark}

\vspace{6mm}

\section{Weighted Nikolskii inequalities}
\label{section-nikolskii}
Nikolskii's inequality for trigonometric polynomials, that is,
$$
\|T_n\|_{L_q(\T)}  \le \,C\, n^{1/p-1/q}\|T_n\|_{L_p(\T)}, \qquad p<q,
$$
plays an important role in approximation theory and functional analysis, in particular, to prove embedding theorems for function spaces
(see, e.g., \cite{dw}). It
is known that if $u$ is an $A_\infty$ weight, then for any $0<p\le
q<\infty$ there is a constant $C=C(u,p,q)$ such that
\begin{equation} \label{nikolskii}
\left(\int_{\T} |T_n|^q u \right)^{1/q} \le \,C\, n^{1/p-1/q}\left( \int_{\T} |T_n|^p u^{p/q}\right)^{1/p};
\end{equation}
see \cite[Th. 5.5]{mas1} and \cite[Th. 8.1]{er}.
Moreover, if $u\in A^*$, then for any $1\le p<\infty$
there is a constant $C=C(u,p)$ such that
\begin{equation} \label{nikolskii1}
\|T_n u \|_{L_{\infty(\T)}} \le \,C\, n^{1/p}\left( \int_{\T}
|T_n|^p
u^{p}\right)^{1/p}; %, \qquad C=C(u,p);
\end{equation}
see \cite[(6.9)]{mas1}. Note that (\ref{nikolskii1}) holds for $0<
p<1$ as well, provided $u\in A^*$. Indeed,  we first apply
(\ref{nikolskii1}) with $p=1$ to get
\begin{equation}\label{f1}
\|T_n\|_{L_\infty(u)} \le \,C\, n \|T_n\|_{L_1(u)}.
\end{equation}
Second, since $u\in A^*$ yields $u\in A_\infty$, we use (\ref{nikolskii}) with $0<p<1$  and $q=1$:
\begin{equation}\label{f2}
 \|T_n\|_{L_1(u)}
 \le \,C\, n^{\frac1p-1} \|T_n\|_{L_p(u^p)}.
\end{equation}
We prove the following weighted Nikolskii inequalities for trigonometric
polynomials.
\begin{theorem}\label{nikol-theorem}
 Let $0<p\le q\le\infty$,
 $\omega=\omega_1\ldots\omega_s$, where  $\omega_i\in \Omega$, $i=1,\ldots, s$, and  $u\in \mathcal{R}_{int}(q)$.
% Let also $u$ satisfy the following condition:
% for each trigonometric polynomial $T_n$ inequality (\ref{nikolskii}) holds.
\\
{\bf (A)}.%\textnormal{(A)}.
\ Let $q<\infty$ and $u^{p/q} \in \mathcal{R}_{int}(p)$. Suppose $u$ is such that inequality (\ref{nikolskii})
 holds for each trigonometric polynomial $T_n$. Then
\begin{equation}\label{nikol1}
\|T_n\|_{L_q(\omega u)} \le \,C\, n^{1/p-1/q}
\|T_n\|_{L_p\big((\omega u)^{p/q}\big)}, %\qquad\mbox{if}\qquad q<\infty
\end{equation}
where $C=C(\omega,u, p,q)$.
\\
{\bf (B)}. %\textnormal{(B)}.
\ Let $p<q=\infty$ and  $u^{p} \in \mathcal{R}_{int}(p)$. Suppose $u$ is such that inequality (\ref{nikolskii1})
holds for each trigonometric polynomial $T_n$. Then
\begin{equation}\label{nikol2}
\|T_n\|_{L_\infty(\omega u)} \le \,C\, n^{1/p}
\|T_n\|_{L_p\big((\omega u)^{p}\big)},
\end{equation}
where $C=C(\omega,u, p)$.

%\begin{equation}
%\int_{\T} |T_n|^q\omega u \le \,C\, n^{1/p-1/q} \int_{\T} |T_n|^p\omega^{p/q} u^{p/q},\qquad 0<p\le q<\infty
%\end{equation}
%where $C=C(\omega, u, p, q)$ and
%\begin{equation}
%\sup_{\T} |T_n| \omega u \le \,C\, n^{1/p} \int_{\T} |T_n|^p\omega^{p} u^p,\qquad 0<p<\infty,
%\end{equation} where $C=C(\omega, u, p)$.
\end{theorem}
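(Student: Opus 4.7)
The plan is to reduce Theorem~\ref{nikol-theorem} to the one-weight Nikolskii inequalities~\eqref{nikolskii} and~\eqref{nikolskii1} applied to a modified trigonometric polynomial, following the same pattern as in Section~\ref{section-lp}: replace $\omega$ by a product of Fourier partial sums via Corollary~\ref{cor-remez-remez} so that the $\omega$-factor can be absorbed into the polynomial $T_n$. Since $\omega_i\in\Omega$ implies $\omega_i^{\gamma}\in\Omega$ for any $\gamma>0$ (the $F_i$ just gets scaled), the corollary applies to $\omega^{p/q}$ and $\omega^{p}$ as well.

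For part~\textbf{(A)}, set $V_N := v^{(1)}_N \cdots v^{(s)}_N$, where $v^{(i)}_N$ denotes the $N$-th partial Fourier sum of $\omega_i^{1/q}$. Two applications of Corollary~\ref{cor-remez-remez} yield, for some $K=K(\omega,u,p,q)$ large enough,
\begin{equation*}
\int_\T |T_n|^q\, \omega u \;\asymp\; \int_\T |T_n V_{Kn}|^q\, u, \qquad \int_\T |T_n|^p\, (\omega u)^{p/q} \;\asymp\; \int_\T |T_n V_{Kn}|^p\, u^{p/q}.
\end{equation*}
The first equivalence uses the corollary at exponent $q$ together with the hypothesis $u\in \mathcal{R}_{int}(q)$; the second uses it at exponent $p$ applied to the $\Omega$-weight $\omega^{p/q}=\omega_1^{p/q}\cdots \omega_s^{p/q}$ together with the hypothesis $u^{p/q}\in \mathcal{R}_{int}(p)$. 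The key point is that both applications produce the \emph{same} auxiliary polynomial $V_{Kn}$, because $(\omega_i^{p/q})^{1/p}=\omega_i^{1/q}$. Since $R_n := T_n V_{Kn}$ is a trigonometric polynomial of degree at most $(1+sK)n$, applying the one-weight Nikolskii inequality~\eqref{nikolskii} to $R_n$ with the weight $u$ gives
\begin{equation*}
\left(\int_\T |R_n|^q u\right)^{1/q} \le C n^{1/p-1/q} \left(\int_\T |R_n|^p u^{p/q}\right)^{1/p},
\end{equation*}
and combining this with the two asymptotic equivalences yields~\eqref{nikol1}.

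For part~\textbf{(B)}, the argument is structurally identical, but now one takes $V_N$ to be the product of the $N$-th partial Fourier sums of the $\omega_i$ themselves. The $L_\infty$-version of Corollary~\ref{cor-remez-remez} gives $\|T_n \omega u\|_{L_\infty(\T)}\asymp \|T_n V_{Kn} u\|_{L_\infty(\T)}$, while the corollary applied at exponent $p$ to the $\Omega$-weight $\omega^p$ (whose $p$-th root is $\omega_i$) with weight $u^p\in \mathcal{R}_{int}(p)$ gives $\int_\T |T_n|^p \omega^p u^p \asymp \int_\T |T_n V_{Kn}|^p u^p$ with the \emph{same} $V_{Kn}$. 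Applying the one-weight Nikolskii inequality~\eqref{nikolskii1} to $R_n = T_n V_{Kn}$ with weight $u$ then delivers~\eqref{nikol2}.

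The only subtle point is ensuring the compatibility of the auxiliary polynomial $V_{Kn}$ across the two applications of Corollary~\ref{cor-remez-remez}; this comes down to the identities $(\omega_i^{p/q})^{1/p}=\omega_i^{1/q}$ in part~(A) and $(\omega_i^p)^{1/p}=\omega_i$ in part~(B), both of which are immediate. Beyond this, the proof is mechanical: absorb $V_{Kn}$ into $T_n$, invoke the already-known one-weight Nikolskii inequality, and reverse the equivalence.
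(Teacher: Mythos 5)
Your proposal is correct and follows the same core strategy as the paper: replace $\omega$ by trigonometric Fourier partial sums so that the weighted Nikolskii inequality reduces, after absorbing those partial sums into the polynomial, to the already-known Nikolskii inequality for $u$. The only organizational difference is that you invoke Corollary~\ref{cor-remez-remez} directly for the full product $\omega_1\cdots\omega_s$, whereas the paper first reduces to $s=1$ by treating $\omega_1\cdots\omega_{s-1}u$ as the ``new'' $u$ (an implicit induction) and then applies Lemma~\ref{lemma-v^p}~(B). Your route is arguably slightly cleaner, since the hypothesis ``$u$ satisfies~\eqref{nikolskii}'' is invoked only for the original $u$, avoiding the need to re-establish that hypothesis for the intermediate weight $\omega_1\cdots\omega_{s-1}u$; but in substance the two proofs rest on the identical ingredients (the equivalence from the restricted Remez inequality, the compatibility $(\omega_i^{p/q})^{1/p}=\omega_i^{1/q}$, and the degree bound $\deg(T_nV_{Kn})\le(1+sK)n$).
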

In particular, this implies
\begin{corollary}\label{nikol-corollary}
 Let
  $\omega=\omega_1\ldots\omega_s$, where  $\omega_i\in \Omega$, $i=1,\ldots, s$.
   Then
inequality (\ref{nikol1}) holds provided $u\in A_\infty$ and
$0<p\le q<\infty$
and
(\ref{nikol2}) holds provided $u\in A^*$ and $0< p<\infty$.
\end{corollary}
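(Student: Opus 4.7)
The plan is to deduce Corollary~\ref{nikol-corollary} from Theorem~\ref{nikol-theorem} by verifying, in each case, the three hypotheses imposed on $u$: membership in $\mathcal{R}_{int}(q)$, membership of the appropriate power of $u$ in $\mathcal{R}_{int}(p)$, and the underlying unweighted-weight Nikolskii inequality \eqref{nikolskii} or \eqref{nikolskii1}.

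For part (A) with $u\in A_\infty$ and $0<p\le q<\infty$: the Nikolskii inequality \eqref{nikolskii} is precisely the content of \cite[Th. 5.5]{mas1} and \cite[Th. 8.1]{er}, so that hypothesis is immediate. The Remez inequality \eqref{665-1} is known for $A_\infty$ weights (as recorded right after Theorem~\ref{remez-theorem}), giving the chain $A_\infty\subset \mathcal{R}(p)\subset \mathcal{R}_{int}(p)$ for every $p\in(0,\infty)$; thus $u\in\mathcal{R}_{int}(q)$. It remains to check $u^{p/q}\in\mathcal{R}_{int}(p)$, and here I would invoke the standard fact that the $A_\infty$ class is closed under positive powers (the reverse H\"older characterization of $A_\infty$ is clearly preserved under $u\mapsto u^s$ for any $s>0$). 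Since $0<p/q\le 1$, this gives $u^{p/q}\in A_\infty$ and hence $u^{p/q}\in\mathcal{R}_{int}(p)$. An application of Theorem~\ref{nikol-theorem}(A) then yields \eqref{nikol1}.

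For part (B) with $u\in A^*$ and $0<p<\infty$: the Nikolskii inequality \eqref{nikolskii1} is stated in \cite[(6.9)]{mas1} for $p\ge 1$ and extended to $0<p<1$ in the paragraph just before Theorem~\ref{nikol-theorem} through \eqref{f1}--\eqref{f2}. From \cite[(6.10)]{mas1} one has $A^*\subset \mathcal{R}_{int}(\infty)$, so $u\in \mathcal{R}_{int}(\infty)$. Since $A^*\subset A_\infty$ and $A_\infty$ is stable under positive powers, $u^p\in A_\infty\subset \mathcal{R}_{int}(p)$. Now Theorem~\ref{nikol-theorem}(B) delivers \eqref{nikol2}.

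The whole argument is essentially bookkeeping once Theorem~\ref{nikol-theorem} is available; the only nontrivial classical input is the closure of $A_\infty$ under positive powers, but this is a textbook fact about Muckenhoupt weights and presents no real obstacle.
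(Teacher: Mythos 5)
Your overall strategy---reducing the corollary to Theorem~\ref{nikol-theorem} by checking the Remez and Nikolskii hypotheses on $u$---is the same as the paper's. However, there is a genuine error in the justification used in part (B). You assert that ``the $A_\infty$ class is closed under positive powers (the reverse H\"older characterization of $A_\infty$ is clearly preserved under $u\mapsto u^s$ for any $s>0$).'' This is false for $s>1$: for instance, $u(x)=|x|^{-1/2}$ on a neighbourhood of $0$ is in $A_\infty$, but $u^s=|x|^{-s/2}$ fails to be locally integrable, hence is not $A_\infty$, once $s\ge 2$. The correct and standard fact is only that $A_\infty$ is stable under powers $s\in(0,1]$. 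In part (A) this gap is harmless, because the exponent you actually use is $p/q\le 1$ and you say so. In part (B), though, you need $u^p\in\mathcal{R}_{int}(p)$ for \emph{all} $0<p<\infty$, and for $p>1$ the claim ``$u\in A_\infty\Rightarrow u^p\in A_\infty$'' does not hold. The fix is to use the stronger $A^*$ hypothesis directly: if $u(t)\le \frac{L}{|I|}\int_I u$ for all $t\in I$, then for $p\ge 1$ Jensen's inequality gives $u^p(t)\le L^p\bigl(\frac{1}{|I|}\int_I u\bigr)^p\le L^p\frac{1}{|I|}\int_I u^p$, so $u^p\in A^*\subset A_\infty$. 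This is precisely what the paper does (fact (ii) in its proof).

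Once that is repaired, your proof is correct, and in fact for the range $0<p<1$ of part (B) it is slightly more direct than the paper's: you verify the three hypotheses of Theorem~\ref{nikol-theorem}(B) head-on (here $u^p\in A_\infty$ because $p<1$ is in the safe range, and \eqref{nikolskii1} for $0<p<1$ was already established in the paragraph preceding the theorem), whereas the paper instead runs a two-step bootstrap, first applying \eqref{nikol2} with $p=1$ and then \eqref{nikol1} with exponents $(p,1)$. Both routes are valid; yours avoids the extra composition at the cost of needing the small-power $A_\infty$ stability, which is elementary.
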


\begin{proof}[Proof of Theorem \ref{nikol-theorem}]
First, by definition of the class $\Omega$, any weight $\omega_i\in \Omega$, $1\le i\le s-1$, is such that
$\omega_i^{p/q} %, \omega_i^{p}
 \in \Omega$ for any $0<p,q<\infty$. Then, by Corollary \ref{remez-int} we get that
  $\big(\omega_1\ldots\omega_{s-1} u\big) \in \mathcal{R}_{int}(q)$ and
  $\big(\omega_1\ldots\omega_{s-1} u\big)^{p/q} \in \mathcal{R}_{int}(p)$.
  Thus, it is enough to prove (\ref{nikol1}) and (\ref{nikol2}) for
$\omega=\omega_s\in \Omega$.
\\
% since by Theorem \ref{remez-theorem-1}
% any weight $\omega_i\in \Omega$, $1\le i\le s-1$ is such that $\omega_i, \omega_i^{p/q}, \omega_i^{p} \in \mathcal{R}_{int}(p)$ for any $0<p,q<\infty$.
(A). \ \ By (\ref{500}) we have
\begin{equation}\label{nikol-v}
\int_{\T} |T_n|^q \omega u\, \asymp \int_{\T} |T_{n}|^q \,
|v_{Kn}|^q u, \qquad u\in \mathcal{R}_{int}(q),
\end{equation}
where $v_{n}$ is the $n$-th partial Fourier sum of $\omega^{1/q}$
and $K=K(\omega,u)$ is large enough. Moreover, applying again
(\ref{500}) for the weight $\omega^{p/q}$, where $0<p\le q<\infty,$
we have
\begin{equation}\label{nikol-v}
\int_{\T} |T_n|^p \omega^{p/q} u^{p/q}\,
\asymp
\int_{\T} |T_{n}|^p \, |v_{Kn}|^p u^{p/q}
\end{equation}
for $K=K(\omega,u,p,q)$ large enough, provided that $u^{p/q}\in
\mathcal{R}_{int}(p)$. Now we apply (\ref{nikolskii})  to get
(\ref{nikol1}).
\\
(B). \ \  The case $q=\infty$ is similar since
$\big(\omega_1\ldots\omega_{s-1} u\big) \in \mathcal{R}_{int}(\infty)$
and
$\big(\omega_1\ldots\omega_{s-1} u\big)^{p} \in \mathcal{R}_{int}(p)$.
\end{proof}

\begin{proof}[Proof of Corollary \ref{nikol-corollary}]
To show (\ref{nikol1}) for $0<p<q<\infty$ and  (\ref{nikol2}) for
$1\le p<\infty$, we use  results from \cite{mas1}, \cite{er},
and the following two facts:\\
(i)\; $u^{p/q}\in A_\infty$ whenever $u\in A_\infty$ and $0<p<q<\infty$ (see \cite[Ch.
V]{stein}) and
\\
(ii)\; $u^{p}\in A^*\subset A_\infty$ whenever $u\in A^*$ and $p>1$.
The
latter follows from Jensen's inequality.

 To prove (\ref{nikol2}) for $0<p<1$, we first
 apply  (\ref{nikol2}) with $p=1$ and then (\ref{nikol1}) with $0<p<1$  and $q=1$ as in (\ref{f1}) and (\ref{f2}).

%\footnote{S: eta chast', po-prezhnemu, dokazavaetsja otdelno, tak kak $u^{p}\in A^*$ whenever $u\in A^*$ tolko pri $p>1$.}
 % first apply (\ref{nikol2}) with $p=1$ to get
%$$\|T_n\|_{L_\infty(\omega u)} \le \,C\, n \|T_n\|_{L_1(\omega u)}.$$
%Second, since $u\in A^*$ satisfies $u\in A_\infty$, we use (\ref{nikol1}) with $0<p<1$  and $q=1$:
%$$ \|T_n\|_{L_1(\omega u)} \le \,C\, n^{\frac1p-1} \|T_n\|_{L_p\big((\omega u)^{p}\big)}.$$
\end{proof}

We finish this section by proving Nikolskii's inequalities for algebraic polynomials.
 %Now we deal with algebraic polynomials $P_n$ on $[-1,1]$.
\begin{corollary}
 Let $0<p\le q\le\infty$ and
 $\omega=\omega_1\ldots\omega_s$, where  $\omega_i(\cos t)\in \Omega$, $i=1,\ldots, s$.
 Then for each algebraic polynomial $P_n$ we have
\begin{equation}\label{nikol12}
\|P_n\|_{L_q([-1,1], \omega u)} \le \,C(p,q,\omega,u)\, n^{2/p-2/q}
\|P_n\|_{L_p\big([-1,1],(\omega u)^{p/q}\big)},\qquad\qquad 0<p\le
q<\infty,
\end{equation}
provided $u\in A_\infty$ and
\begin{equation}\label{nikol13}
\|P_n\|_{L_\infty([-1,1],\omega u)} \le \,C(p,q,\omega,u)\, n^{2/p}
\|P_n\|_{L_p\big([-1,1],(\omega u)^{p}\big)},\qquad\qquad
0<p<\infty,
\end{equation}
provided $u\in A^*$.
\end{corollary}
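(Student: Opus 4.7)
The plan is to reduce to the trigonometric Nikolskii inequality (Theorem \ref{nikol-theorem}) via the Chebyshev substitution $x=\cos t$, and to compensate for the mismatch between the algebraic Jacobian $|\sin t|$ and the power $|\sin t|^{p/q}$ that appears inside the trigonometric weight by means of a Remez inequality applied to a thin boundary layer near $t=0,\pi$. Setting $T_n(t)=P_n(\cos t)$, a trigonometric polynomial of degree at most $n$, the identity $dx=|\sin t|\,dt$ together with evenness gives
\[
\|P_n\|^q_{L_q([-1,1],\omega u)}=\tfrac12\int_{\T}|T_n(t)|^q\bar\omega(t)\bar u(t)\,dt,
\]
where $\bar\omega(t)=\omega(\cos t)=\prod_{i}\omega_i(\cos t)$ is a product of $\Omega$-weights and $\bar u(t)=u(\cos t)|\sin t|$. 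By \eqref{A-infty}, the hypothesis $u\in A_\infty([-1,1])$ gives $\bar u\in A_\infty(\T)$, so Theorem \ref{nikol-theorem}(A) applies and yields
\[
\|T_n\|_{L_q(\bar\omega\bar u)}\le C\,n^{1/p-1/q}\,\|T_n\|_{L_p((\bar\omega\bar u)^{p/q})}.
\]

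The technical core is to show $\|T_n\|_{L_p((\bar\omega\bar u)^{p/q})}\le C n^{1/p-1/q}\|P_n\|_{L_p((\omega u)^{p/q})}$, which a direct substitution fails to produce because $(\bar\omega\bar u)^{p/q}$ carries $|\sin t|^{p/q}$ in place of the Jacobian $|\sin t|$. To bridge the gap, let $E=\{t\in\T:|\sin t|<c/n\}$; this is a union of two intervals of length $\asymp 1/n$ centred at $t=0,\pi$, with $|E|\le C/n$. Since $\Omega$ is closed under positive powers (the function $(p/q)F$ inherits $(F1)$--$(F4)$ from $F$) and since $\bar u\in A_\infty$ implies $\bar u^{p/q}\in A_\infty$ and hence doubling, the weight $(\bar\omega\bar u)^{p/q}$ is a product of $\Omega$-weights and a doubling weight, so Corollary \ref{remez-int} places it in $\mathcal{R}_{int}(p)$. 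The Remez inequality therefore lets us excise $E$ at bounded cost (since $n|E|\le C$):
\[
\|T_n\|^p_{L_p((\bar\omega\bar u)^{p/q})}\le C\,\|T_n\|^p_{L_p((\bar\omega\bar u)^{p/q},\T\setminus E)}.
\]
On $\T\setminus E$ we have $|\sin t|^{p/q}=|\sin t|^{p/q-1}|\sin t|\le C n^{1-p/q}|\sin t|$, so changing back to $x=\cos t$ produces exactly the desired bound with the factor $n^{1/p-1/q}$. Chaining with the trigonometric Nikolskii estimate yields $\|P_n\|_{L_q(\omega u)}\le C n^{2/p-2/q}\|P_n\|_{L_p((\omega u)^{p/q})}$, which is \eqref{nikol12}.

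The case $q=\infty$ follows the same three-step pattern. Now $\|P_n\|_{L_\infty([-1,1],\omega u)}=\|T_n\bar\omega\bar u\|_{L_\infty(\T)}$ with $\bar u(t)=u(\cos t)$ (no Jacobian factor), and $u\in A^*([-1,1])$ yields $u(\cos t)\in A^*(\T)\subset A_\infty(\T)$ via \eqref{A-zvezda}, so $u^p(\cos t)\in A_\infty(\T)$ is doubling and $(\bar\omega\bar u)^p\in\mathcal{R}_{int}(p)$. Theorem \ref{nikol-theorem}(B) supplies the first factor $n^{1/p}$; applying the Remez inequality on the same set $E$ together with the pointwise estimate $1\le Cn|\sin t|$ on $\T\setminus E$ restores the missing Jacobian at the cost of a second factor $n^{1/p}$, yielding $n^{2/p}$ as in \eqref{nikol13}. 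The main obstacle throughout is the boundary mismatch in the Jacobian, which is precisely what $\mathcal{R}_{int}$-membership of the transferred weights allows us to absorb.
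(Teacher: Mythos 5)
Your proof is correct, but it takes a genuinely different route from the paper's. For $q<\infty$, the paper does not pass through the trigonometric Nikolskii theorem at all: it defines the bad set
\[
E:=\Big\{x\in [-1,1]: n^2\int_{-1}^1 |P_n|^q \omega u\le |P_n(x)|^q\omega(x)u(x)\Big\},
\]
notes $|E|\le n^{-2}$ by Chebyshev's inequality, removes $E$ at bounded cost by the algebraic Remez inequality (Corollary~\ref{corollary-remez}, using $n\sqrt{|E|}\le 1$), and then on the complement bounds $(|P_n|^q\omega u)^{(q-p)/q}$ pointwise by $\big(n^2\|P_n\|^q_{L_q(\omega u)}\big)^{(q-p)/q}$ to peel off the factor $n^{2(q-p)/q}$ and obtain \eqref{nikol12} after rearrangement. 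For $q=\infty$, the paper approximates each $\omega_i(\cos t)$ by partial Fourier sums $v^{(i)}_{Kn}$ via Corollary~\ref{cor-remez-remez}, so that $P_n\prod_i v^{(i)}_{Kn}$ is an algebraic polynomial of comparable degree, and invokes the known algebraic $A^*$ Nikolskii inequality $\|P_n\|_{L_\infty(u)}\le Cn^{2/p}\|P_n\|_{L_p(u^p)}$ from \cite{mas1}. Your approach instead pushes both inequalities through $x=\cos t$, applies Theorem~\ref{nikol-theorem}, and then absorbs the Jacobian mismatch $|\sin t|^{p/q}$ versus $|\sin t|$ (resp.\ $1$ versus $|\sin t|$) by excising the thin layer $\{|\sin t|<c/n\}$ with the $\mathcal{R}_{int}(p)$-Remez inequality; each such step costs a further factor $n^{1/p-1/q}$ (resp.\ $n^{1/p}$), which is exactly what doubles the exponent. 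Both arguments are sound. The paper's $q<\infty$ argument is more self-contained and avoids any transfer; your version is more modular in that it reuses the trigonometric theorem verbatim and treats $q<\infty$ and $q=\infty$ by the same mechanism, at the price of the extra Remez/Jacobian bookkeeping. One small point worth making explicit in your write-up: the membership $(\bar\omega\bar u)^{p/q}\in\mathcal{R}_{int}(p)$ relies on the fact that raising an $A_\infty$ weight to a power in $(0,1]$ keeps it in $A_\infty$ (and hence doubling), applied to $\bar u=u(\cos t)|\sin t|$ as a whole rather than to $u$ alone, since the power also hits the $|\sin t|$ factor.
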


\begin{proof}
%\footnote{\bf{important} S: dok-vo peredelal}

%For each $\omega_i(\cos t)\in \Omega$, $i=1,\ldots, s$, we take $v^{(i)}_{Kn} (\cos t)$ being the $Kn$-th partial sum of the Fourier series of
%$\omega_i^{1/q}(\cos t)$. Then by Corollary \ref{hhh}, changing variables gives
%$$\int_{-1}^1 |P_n |^q \omega u \asymp \int_{-1}^1 |P_n |^q |v^{(1)}_{Kn}|^q \cdots |v^{(s)}_{Kn}|^q u,$$
%provided that $u(\cos t)|\sin t|$ is an $A_\infty$ weight on $\T$. The latter follows since $u$ is an $A_\infty$ on $[-1,1]$; see (\ref{hh}).
%Moreover,
%$$\int_{-1}^1 |P_n |^p (\omega u)^{p/q} \asymp \int_{-1}^1 |P_n |^p
%|v^{(1)}_{Kn}|^p \cdots |v^{(s)}_{Kn}|^p u^{p/q},$$
%provided that $u^{p/q}$ is an $A_\infty$ on $[-1,1]$. The latter follows from the condition $u\in A_\infty$
%(see \cite[Ch. V]{stein}).
%Finally,\backslash\setminus
First, let $0<p\le q<\infty$. We give a straightforward proof applying the Remez inequalities for algebraic polynomials given by Corollary \ref{corollary-remez}. Define
$$E:=\Big\{x\in [-1,1]: n^2\int_{-1}^1 |P_n|^q \omega u\le |P_n(x)|^q\omega(x)u(x)\Big\}.$$
Then, since $|E|\le n^{-2}$ inequality (\ref{vspom}) yields

\begin{eqnarray*}
\|P_n\|_{L_q([-1,1], \omega u)}^q &\le& C(q,\omega,u)
\int\limits_{[-1,1]\backslash E} |P_n|^q \omega u
\\
&\le& C(q,\omega,u) \Big\| |P_n|^q \omega
u\Big\|^{(q-p)/q}_{L_\infty({[-1,1]\backslash E})}
\int\limits_{[-1,1]\backslash E} |P_n|^p (\omega u)^{p/q}
\\
&\le& \,C(q,\omega,u) n^{2 (q-p)/q} \Big(\int\limits_{-1}^1
|P_n|^q \omega u \Big)^{(q-p)/q} \int\limits_{-1}^1 |P_n|^p
(\omega u)^{p/q},
\end{eqnarray*}
which gives (\ref{nikol12}).

%\, n^{2/p-2/q}
%\|P_n\|_{L_p\big([-1,1],(\omega u)^{p/q}\big)},\qquad\qquad 0<p\le q<\infty,

Let now
 $0<p<\infty$ and $u\in A^*$.
Let
 $v^{(i)}_{n}(\cos t)$ be the $n$-th partial Fourier sum of
$\omega_i(\cos t)\in\Omega$, $i=1,\ldots, s$. Then, by
Corollary \ref{cor-remez-remez} changing variables gives
$$
\Big\|
P_n  \omega u
\Big\|_{L_\infty[-1,1]} \asymp
\Big\|
P_n  v^{(1)}_{Kn} \cdots v^{(s)}_{Kn} u \Big\|_{L_\infty[-1,1]},
$$
provided that $u(\cos t)|\sin t|$ is an $A^*$ weight on $\T$.
The latter holds from % since $u$ is an $A^*$ weight on $[-1,1]$; see
 (\ref{A-zvezda}).

Moreover, since $u^p\in A^*\subset A_\infty$, $p>1$,
 Corollary \ref{cor-remez-remez} implies that
$$
\int_{-1}^1 |P_n |^p (\omega u)^{p} \asymp \int_{-1}^1
|P_n |^p |v^{(1)}_{Kn}|^p \cdots |v^{(s)}_{Kn}|^p u^{p}.
$$
%provided that $u^{p/q}$ is an $A_\infty$ weight on $[-1,1]$. The
%latter follows from the condition $u\in A_\infty$ (see \cite[Ch.
%V]{stein}).
%Finally, (\ref{nikol12}) follows from
%$$\|P_n\|_{L_q(u)} \le \,C\, n^{2/p-2/q}
%\|P_n\|_{L_p\big(u^{p/q}\big)}, \qquad u\in A_\infty,\qquad 0<p<q<\infty;$$ see \cite[(7.20)]{mas1}.
Then (\ref{nikol13}) for $1\le p<\infty$ follows from
$$
\|P_n\|_{L_\infty(u)} \le \,C (p,u)\, n^{2/p}
\|P_n\|_{L_p(u^{p})}, \qquad u\in A^*,\qquad 1\le p<\infty;
$$
see \cite[(7.31)]{mas1}.
The case $0<p<1$ can be treated as in the proof of
Corollary \ref{nikol-corollary}.

\end{proof}

\vspace{6mm}

\section{Necessary conditions for weighted Bernstein inequality}
\label{section-necessary}
We will use the following properties of the Chebyshev polynomials
$\mathcal{T}_{n}$ defined by $\mathcal{T}_{n} (\cos t)= \cos nt$:
\begin{equation}
\label{p1} |\mathcal{T}_{n}(x)|\le 1,\qquad |x|\le 1;
\end{equation}
\begin{equation}
\label{p2} \quad \mathcal{T}_{n}(x)\quad\text{is increasing on }\,
(1,\infty);
\end{equation}
\begin{equation}
\label{p3}
\mathcal{T}_n(x)=\frac{1}{2}\Big(\big(x+\sqrt{x^2-1}\big)^n+\big(x-\sqrt{x^2-1}\big)^n\Big)\qquad
\text{for every}\quad x\in \mathbb{R}\setminus (-1,1).
\end{equation}
The last identity readily implies that
\begin{equation}
\label{p4}\mathcal{T}_n\big(1+\frac 1{n^2}\big)\le C_1,\qquad n\in \N,
\end{equation}
and
\begin{equation}
\label{p5}\frac{\mathcal{T}_n'(x)}{\mathcal{T}_n(x)}\ge \frac14\frac
n{\sqrt{x^2-1}},\qquad x>1+\frac 1{n^2}, \qquad n\in \N.
\end{equation}
To prove the main theorems of this section we need two auxiliary results.

\begin{lemma} \label{seq}
Let $ \xi$ be a negative increasing continuous  function on
$(0,\epsilon)$, for some $\epsilon>0$, and such that $ \xi(0+)=-\infty$ and,
for each $r\in(0,1)$,
\begin{equation}
\label{gg1} \frac{ \xi(rx)}{ \xi(x)}\to\infty\qquad\text{as }\quad x\to 0+.
\end{equation}
Then for each positive sequences $h_n$ such that $h_n\to 0$ as
$n\to\infty$ there exists a positive sequence $\beta_n\to 0$ as
$n\to\infty$ such that,
for each $r\in(0,1)$,
$$
\inf_{x\in(0,h_n)}\frac{ \xi(rx)}{ \xi(x)}\beta_n\to\infty \qquad\text{as
} \quad n\to\infty.
$$
\end{lemma}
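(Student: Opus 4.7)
The plan is to reduce everything to the one-parameter quantity
$$
a_n(r) := \inf_{x \in (0, h_n)} \frac{\xi(rx)}{\xi(x)}, \qquad r \in (0,1),
$$
and then to select $\beta_n$ by a diagonal construction that exploits the monotonicity of $a_n(r)$ in $r$. Throughout I would assume $n$ is large enough that $h_n < \epsilon$, so the ratio is well defined on $(0, h_n)$; the finitely many earlier values of $\beta_n$ can be assigned arbitrarily.

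First I would record two elementary properties of $a_n(r)$. (i) For each fixed $r \in (0,1)$, $a_n(r) \to \infty$ as $n \to \infty$: by hypothesis $\xi(rx)/\xi(x) \to \infty$ as $x \to 0+$, so for every $M > 0$ there is $\delta > 0$ with $\xi(rx)/\xi(x) > M$ on $(0,\delta)$; since $h_n \to 0$, eventually $h_n < \delta$ and hence $a_n(r) \ge M$. (ii) The map $r \mapsto a_n(r)$ is decreasing on $(0,1)$: if $r_1 < r_2 < 1$ and $x > 0$, then $r_1 x < r_2 x$ and $\xi$ is increasing and negative, so $\xi(r_1 x) < \xi(r_2 x) < 0$, which gives $\xi(r_1 x)/\xi(x) > \xi(r_2 x)/\xi(x)$; taking infima preserves the inequality.

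Next, set $r_k := 1 - 1/k$. By (i), for each $k$ I can choose integers $N_1 < N_2 < \cdots$ such that $a_n(r_k) \ge k^2$ whenever $n \ge N_k$. Define $\beta_n := 1/k$ for $N_k \le n < N_{k+1}$. Then $\beta_n \to 0$. To verify the required convergence, fix any $r \in (0,1)$ and pick $j$ with $r < r_j$. For $n \ge N_j$, write $\beta_n = 1/k$ with $k \ge j$. Applying (ii) twice, $a_n(r) \ge a_n(r_j) \ge a_n(r_k) \ge k^2$, so $\beta_n a_n(r) \ge k \ge j$, proving $\beta_n a_n(r) \to \infty$.

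The main subtlety, which motivates the construction, is that a single sequence $\beta_n$ must witness divergence of $\beta_n a_n(r)$ for every $r \in (0,1)$ simultaneously; the monotonicity in step (ii) is exactly what reduces this uncountable requirement to control along the countable family $\{r_k\}$, on which a standard diagonal choice suffices.
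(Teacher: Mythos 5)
Your proof is correct and follows essentially the same route as the paper: both use the countable family $r_k = 1-1/k$, choose thresholds $N_k$ (the paper's $n(k)$) at which the infimum exceeds $k^2$, set $\beta_n = 1/k$ on the resulting blocks, and then for a fixed $r$ reduce to a nearby $r_j > r$ via monotonicity of $\xi$. Your write-up is a touch more explicit about the monotonicity of $a_n(\cdot)$ in $r$, but this is a presentational difference, not a different argument.
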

\begin{proof}Fix a positive sequence $h_n\to 0$.
By~\eqref{gg1}, there exists an increasing sequence of positive
integers $n(k)$ such that for each $n>n(k)$
\begin{equation}
\label{gg2} \inf_{x\in(0,h_n)}\frac{ \xi((1-1/k)x)}{ \xi(x)}>k^2.
\end{equation}
Put $\beta_n=1/k$ for $n\in [n(k)+1,n(k+1)]$. Fix $r\in (0,1)$.
Consider a positive integer $K$ such that $1-1/K>r$ and $h_n<\epsilon$ for $n>n(K)$.
Applying monotonicity of $ \xi$ and~\eqref{gg2}, we get that
$$
\inf_{x\in(0,h_n)}\frac{ \xi(rx)}{ \xi(x)}\beta_n>\inf_{x\in(0,h_n)}\frac{ \xi((1-1/K)x)}{ \xi(x)}\frac
1K> K,\quad n\in [n(K)+1,n(K+1)].
$$
This establishes the statement of the lemma.
\end{proof}
The proof of the next lemma is a trivial corollary of the mean value theorem.
\begin{lemma}
\label{M} Let $ \xi$ be an increasing continuous  function on
$(0,\epsilon)$, for some $\epsilon>0$, and  such that $ \xi(0+)=-\infty$.
Then, for each $M$ large enough, the equation
$$
 \xi(x)=-Mx
$$
has a unique solution $y(M)\in (0, \epsilon)$, which is continuous in $M$ and
decreasing to $0$ as $M\to\infty$.
\end{lemma}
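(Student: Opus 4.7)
The approach is to study the auxiliary function $h_M(x) := \xi(x) + Mx$ on $(0,\epsilon)$ and extract everything via the intermediate value theorem together with strict monotonicity. First, I would establish existence. Because $\xi$ is continuous and increasing on $(0,\epsilon)$, fixing any $x_0 \in (0,\epsilon)$ gives $\xi(x) \ge \xi(x_0) > -\infty$ for $x \ge x_0$, hence $\xi(\epsilon-) > -\infty$. For $M$ sufficiently large, $h_M(\epsilon-) = \xi(\epsilon-) + M\epsilon > 0$, while $h_M(x) \to -\infty$ as $x \to 0+$ by assumption on $\xi$. The IVT then produces a zero. Uniqueness is immediate because $h_M$ is strictly increasing (sum of the nondecreasing $\xi$ and the strictly increasing $Mx$ with $M > 0$). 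Denote the unique zero by $y(M) \in (0,\epsilon)$.

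Next, for the strict monotonicity of $M \mapsto y(M)$, I would compare $h_{M_1}$ and $h_{M_2}$ at the point $y(M_1)$: for $M_1 < M_2$,
$$
h_{M_2}(y(M_1)) = h_{M_1}(y(M_1)) + (M_2 - M_1)\,y(M_1) = (M_2 - M_1)\,y(M_1) > 0,
$$
and since $h_{M_2}$ is strictly increasing with unique root $y(M_2)$, this forces $y(M_2) < y(M_1)$. For the limit $y(M) \to 0$ as $M \to \infty$, suppose toward contradiction that along some sequence $M_n \to \infty$ we had $y(M_n) \ge c > 0$. Then monotonicity of $\xi$ gives $\xi(y(M_n)) \ge \xi(c) > -\infty$, contradicting $\xi(y(M_n)) = -M_n y(M_n) \le -M_n c \to -\infty$.

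Finally, for continuity in $M$, I would use that any strictly monotone function on an interval can only have jump discontinuities. If $M^\ast$ were such a jump, then $y_- := \lim_{M \uparrow M^\ast} y(M) > y_+ := \lim_{M \downarrow M^\ast} y(M)$, both limits lying in $(0,\epsilon)$. Passing to the limit in $\xi(y(M)) = -My(M)$ and using continuity of $\xi$ gives $\xi(y_\pm) = -M^\ast y_\pm$, so $h_{M^\ast}$ has two distinct zeros, contradicting the uniqueness established above. The whole argument is routine bookkeeping; the only mildly delicate step is the continuity argument, which is the place where one genuinely uses that $y(M)$ is the unique zero and not merely some zero.
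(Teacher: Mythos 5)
Your proof is correct and takes essentially the same approach the paper gestures at: the paper simply declares the lemma ``a trivial corollary of the mean value theorem'' (clearly meaning the intermediate value theorem, as no differentiability is assumed), and your write-up supplies exactly the IVT-plus-strict-monotonicity bookkeeping that remark is hiding, including the slightly delicate continuity step via the no-jump argument.
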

%\begin{proof}\end{proof}

Now we give the following extension of Theorem \ref{b}.
\begin{theorem}
\label{neg} Let $\omega\in C(\T)$ be an arbitrary weight function
satisfying the following conditions:
\begin{equation}
\label{p6} \omega(t_0)=0,\,\,\text{for some}\,\, t_0\in \T,
\end{equation}
\begin{equation}
\label{p7} \omega\,\text{is increasing on }
(t_0,t_0+\epsilon)\,\,\text {and}\,\, \omega\,\text{is decreasing on
} (t_0-\epsilon,t_0)\,\, \text{for some}\,\, \epsilon>0,
\end{equation}
\begin{equation}
\label{p8} \lim_{t\to
t_0}\frac{\log{\omega(t_0+r(t-t_0))}}{\log{\omega(t)}}=\infty,
\quad\text{for each } r\in (0,1).
\end{equation}

Then for each $0<p\le\infty$ there exists a sequence of
trigonometric polynomials $Q_n$ of degree at most $n$ such that
$$
\lim_{n\to\infty}\frac{\|Q_n'\|_{L_p(\omega)}}{n\|Q_n\|_{L_p(\omega)}}=\infty.
$$
%$$ \|Q_n'w\|_{C(\T)}\ge Cn\|Q_nw\|_{C(\T)}. $$
\end{theorem}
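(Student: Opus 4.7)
Assume without loss of generality that $t_0=0$ and set $\xi(s):=\log\omega(s)$. By \eqref{p7}-\eqref{p8}, $\xi$ is continuous and strictly increasing on $(0,\epsilon)$ with $\xi(0^+)=-\infty$, and $\xi(rs)/\xi(s)\to\infty$ as $s\to 0^+$ for every $r\in(0,1)$. Using Lemma~\ref{M}, let $y(M)$ be the unique positive root of $\xi(x)=-Mx$, so $y(M)\downarrow 0$. Applying Lemma~\ref{seq} with $h_n:=y(n)$ yields $\beta_n\downarrow 0$ with $\inf_{s\in(0,y(n))}\xi(rs)/\xi(s)\cdot\beta_n\to\infty$ for each $r\in(0,1)$. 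Set $M_n:=n/\beta_n$ and $y_n:=y(M_n)$, so that $y_n\to 0$, $M_ny_n\to\infty$, and $\omega(y_n)=\exp(-M_ny_n)$.

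The proposed extremal polynomial is
\[Q_n(t):=\mathcal{T}_n\bigl(1+\cos t-\cos y_n\bigr),\]
a trigonometric polynomial of degree exactly $n$, since the inner argument $h_n(t):=1+\cos t-\cos y_n$ is a trigonometric polynomial of degree one. By construction $h_n(\pm y_n)=1$, $h_n(0)=2-\cos y_n\approx 1+y_n^2/2$, and $h_n(t)\in[-\cos y_n,1]\subset[-1,1]$ for $|t|\in[y_n,\pi]$. Hence by \eqref{p1}-\eqref{p3}, $|Q_n(t)|\le 1$ on the outer arc $|t|\ge y_n$, while $|Q_n(0)|\asymp\exp(ny_n)$ at the peak.

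For the norm comparison, on the peak $|t|\le y_n$ one has $|Q_n|^p\omega\le\exp(ny_n(p-1/\beta_n))$, so integration gives a contribution $\le y_n\exp(-M_ny_n(1-p\beta_n))\to 0$; on the outer arc $|Q_n|\le 1$ with $\mathcal{T}_n(h_n(t))=\cos(n\arccos h_n(t))$ oscillating, hence $\|Q_n\|_{L_p(\omega)}\asymp 1$. For the derivative, $Q_n'(t)=\mathcal{T}_n'(h_n(t))(-\sin t)$; on a thin annulus $y_n\le|t|\le(1+\tau_n)y_n$ with $\tau_n$ tuned so that $ny_n\sqrt{\tau_n}\asymp 1$, property \eqref{p5} together with $|\sin t|\asymp y_n$ yields $|Q_n'(t)|\asymp n^2 y_n$ at the extrema of $\sin(n\arccos h_n(t))$, while the weight there equals $\exp(\xi((1+\tau_n)y_n))$ with $|\xi((1+\tau_n)y_n)|\ll M_ny_n=|\xi(y_n)|$ by \eqref{p8} and Lemma~\ref{seq}; thus $\omega$ is exponentially larger on the annulus than at $y_n$. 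A suitable calibration of $\tau_n$ then forces the annulus contribution to $\|Q_n'\|_{L_p(\omega)}^p$ to dominate $n^p\|Q_n\|_{L_p(\omega)}^p$ by a divergent factor, giving the claim. The main obstacle is the simultaneous calibration of the three scales $(y_n,\tau_n,M_n)$ for polynomials of degree exactly $n$; this is precisely where Lemma~\ref{seq} and the strong limit form of \eqref{p8} (rather than the $\limsup$ of Theorem~\ref{neg1}) are essential.
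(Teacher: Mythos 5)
Your polynomial is essentially the paper's: $1+\cos t-\cos y_n$ and the paper's $1+a_n^2-\sin^2 t=\cos^2 t+a_n^2$ are both degree-one trigonometric polynomials peaking at $t=0$ and crossing $1$ at the scale $y_n$ (resp.\ $a_n$). But the calibration of that scale is wrong, and the resulting comparison fails. You set $\xi(y_n)=-M_ny_n$ with $M_n=n/\beta_n$ and $\beta_n\to 0$, so the coefficient $M_n/n\to\infty$; the paper (Step~4) instead solves $\xi(z_n)=-\tfrac12 n\sqrt{1-\lambda_n^2}\,z_n$ with $\lambda_n\to 1^-$, i.e.\ a coefficient $o(n)$ and hence a scale that is \emph{larger}, not smaller. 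This is not cosmetic. With your $y_n$, for every $\rho<1$ one has $Q_n(\rho y_n)\omega(\rho y_n)\lesssim\exp(cny_n+\xi(\rho y_n))\le\exp(y_n(cn-M_n))\to 0$, so $\|Q_n\omega\|_{L_\infty}\asymp 1$ is attained well away from $0$ and, more importantly, the region where $|Q_n'|$ is large carries an exponentially tiny weight. The paper by contrast arranges $Q_n(b_n)\omega(b_n)\to\infty$ (condition~\eqref{p9}) and then compares $|Q_n'(b_n)\omega(b_n)|$ to $n\,Q_n(b_n)\omega(b_n)$ at the \emph{same} point $b_n$ via \eqref{p5}, getting a ratio $\asymp\sqrt{a_n/(a_n-b_n)}\to\infty$ from $b_n/a_n\to 1$; no second interval, no cancellation issue.

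The specific step where your argument breaks is the claim $|\xi((1+\tau_n)y_n)|\ll M_ny_n$ with $\tau_n$ forced to satisfy $ny_n\sqrt{\tau_n}\asymp 1$, so $\tau_n\asymp(ny_n)^{-2}$. Neither \eqref{p8} nor Lemma~\ref{seq} controls $\xi((1+\tau_n)y_n)/\xi(y_n)$ when $\tau_n\to0$ is dictated externally by the degree: the hypothesis gives $\xi(rs)/\xi(s)\to\infty$ for \emph{fixed} $r<1$, and Lemma~\ref{seq} trades a divergence rate for a slowly decaying $\beta_n$, but you have already spent $\beta_n$ in defining $M_n$. Concretely, take $\omega(s)=\exp(-\exp(1/|s|))$ near $0$ (which satisfies \eqref{p6}--\eqref{p8}). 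Then $\xi(s)=-\exp(1/s)$, $y_n\asymp1/\log M_n\asymp1/\log n$, and $\xi((1+\tau)y)/\xi(y)=\exp(-\tau/(y(1+\tau)))$. With $\tau_n\asymp(ny_n)^{-2}\asymp(\log n/n)^2$ one has $\tau_n/y_n\asymp(\log n)^3/n^2\to0$, so $\xi((1+\tau_n)y_n)/\xi(y_n)\to1$, not $0$. The weight on your annulus is then $\approx\exp(-M_ny_n)$ and the derivative contribution $|Q_n'|\omega\approx n^2y_n\exp(-M_ny_n)\to0$: the annulus term is negligible, not dominant. The missing ingredient is the extra parameter $\lambda_n$ that the paper threads through Lemma~\ref{seq} precisely so that the comparison point $\lambda_n a_n$ can be chosen \emph{independently} of the oscillation scale $\asymp 1/n$ of the Chebyshev polynomial, rather than being locked to it by the requirement $ny_n\sqrt{\tau_n}\asymp1$.
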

\begin{remark}  % We deal with continuous weights.
{(i)}\ \  Note that if $\omega$ is a continuous nondoubling weight then $\omega(t_0)=0$ for some $t_0\in \T$, i.e., condition~\eqref{p6} holds.
 Without loss of generality we assume  below that $t_0=0$ and $\|\omega\|_{C(\T)}\le 1$.
\\
 %  Note that condition~\eqref{p6} is necessary  in order $w$ to be a non doubling weight.
(ii) \ \
 Condition~\eqref{p7} is assumed to simplify the proof. %we need for the simplicity of the proof and probably could be removed.
   The principal condition
is~\eqref{p8}, which implies that $\omega$ goes to $0$  fast enough
as $t\to 0$. Condition~\eqref{p8} can be equivalently written as
follows: for each $r\in(0,1)$,
$$
\lim_{t\to 0}\frac{\log{\omega(rt)}}{\log{\omega(t)}}\quad
\text{exists or equal}\;\, \infty,
$$
and, for some $r^*\in(0,1)$,
$$
\lim_{t\to 0}\frac{\log{\omega(r^*t)}}{\log{\omega(t)}}=\infty.
$$
\end{remark}
\begin{example}A typical example of  weights satisfying conditions of Theorem
\ref{neg} is
$$
\omega^*_{\alpha} (t) =\exp \big(- F(g(t))\Big), %\exp (|\sin x|^{-\alpha})), \quad \alpha>0.
$$
where
$$
F(x)= \exp \big(|x|^{-\alpha}\big),
\qquad \alpha>0,
%\qquad\mbox{and $g$ is an analytic such that}\qquad g(t): \T\to [-1,1]
$$
and $g$ is an analytic function, $g(t): \T\to [-1,1]$, $g(0)=0$.
 %$$w_{\alpha}=\exp (-\exp (|\sin x|^{-\alpha})), \quad \alpha>0.$$
Although $\omega^*_{\alpha}\in C^{\infty}(\T)$, the result of Theorem \ref{tri**} is not true for this kind of functions.
\end{example}

\begin{proof}[Proof of Theorem~\ref{neg}]

Our proof is in five steps.
 First, we will prove the theorem for $p=\infty$ (steps 1--4).
\\[10pt]
\underline{Step 1.}
 Recall that $t_0=0$ and $\|\omega\|_{C(\T)}\le 1$.
We choose $Q_n$ as follows:
$$
Q_n(t):=\mathcal{T}_n(1+a_n^2-\sin ^2t),
$$
where $a_n\to 0$ is a positive sequence depending on $\omega$ to be
chosen later. For each $n\in\N$, we denote by $b_n$ any point on
$\T$ such that
$$
\|Q_n\omega\|_{C(\T)}=|Q_n(b_n)\omega(b_n)|.
$$
Without loss of generality we may assume that $b_n\in(0,\pi)$. Suppose that the
sequence $\{a_n\}$ is such that
\begin{equation}
\label{p9} \lim_{n\to\infty}Q_n(b_n)\omega(b_n)=\infty,
\end{equation}
and
\begin{equation}
\label{p10} b_n=a_n(1+o(1))\qquad \text{as}\quad n\to\infty.
\end{equation}
Then~\eqref{p4} and~\eqref{p9} imply
\begin{equation} \label{p10.5}
1+a_n^2-\sin^2b_n>1+\frac 1{n^2}
\end{equation}
for $n$ large enough.
%\end{proof}

Hence,
\begin{eqnarray*}
\frac{\|Q_n'\omega\|_{C(\T)}}{n\|Q_n\omega\|_{C(\T)}}&\ge&
\frac{|Q_n'(b_n)\omega(b_n)|}{nQ_n(b_n)\omega(b_n)}=
\frac{\mathcal{T}'_n(1+a_n^2-\sin^2b_n)|\sin
2b_n|}{n\mathcal{T}_n(1+a_n^2-\sin^2b_n)}\\
&\ge& \frac{|\sin 2b_n|}{4\sqrt{(1+a_n^2-\sin^2b_n)^2-1}},
\end{eqnarray*}
where in the last inequality we used~\eqref{p5}. Finally, taking
into account~\eqref{p10}, we obtain
$$
\lim_{n\to\infty}\frac{\|Q_n'\omega\|_{C(\T)}}{n\|Q_n\omega\|_{C(\T)}}=\infty,
$$
which is the statement of the theorem in the case $p=\infty$.
\\[10pt]
\underline{Step 2.}
Let us now focus on the search of the sequence $a_n$ which satisfies~\eqref{p9} and~\eqref{p10}.
Note that if we take sequences $a_n\to 0$ and $\lambda_n\to
1$ such that
\begin{equation}
\label{p11}
\mathcal{T}_n(1+a_n^2-\sin^2(\lambda_na_n))\omega(\lambda_na_n)\to\infty\qquad\text{as}\quad
n\to\infty,
\end{equation}
and, for each $r\in(0,1)$,
\begin{equation}
\label{p12} \mathcal{T}_n(1+a_n^2)\omega(ra_n)\to 0
\qquad\text{as}\quad n\to\infty,
\end{equation}
then $a_n$ satisfies~\eqref{p9} and~\eqref{p10}. Indeed, condition~\eqref{p11} immediately implies~\eqref{p9},
so~\eqref{p10.5} holds as well, and hence
$$
\limsup_{n\to\infty}\frac{b_n}{a_n}\le 1.
$$
If
$$
\liminf_{n\to\infty}\frac{b_n}{a_n}<r<1,
$$
then, applying~\eqref{p2} and \eqref{p7}, we have
$$
Q_n(b_n)\omega(b_n)\le\mathcal{T}_n(1+a_n^2)\omega(ra_n)
$$
for infinitely many $n\in\N$. This inequality together
with~\eqref{p12} contradicts~\eqref{p9}. So,
$$\liminf_{n\to\infty}\frac{b_n}{a_n}\ge 1\quad\mbox{ and therefore,}
\qquad
\lim_{n\to\infty}\frac{b_n}{a_n}=1,
$$
which is~\eqref{p10}.
\\[10pt]
\underline{Step 3.} Let us set $$ \xi:=\log \omega.$$ Taking logarithm in the
both sides of~\eqref{p11} and~\eqref{p12}, and applying~\eqref{p3}
we get that if $\{a_n\}$ and $\{\lambda_n\}$ satisfy
$$
n\log\left(1+a_n^2-\sin^2(\lambda_na_n)+\sqrt{(1+a_n^2-\sin^2(\lambda_na_n))^2-1}\right)+ \xi(\lambda_na_n)\to\infty\qquad\text{as}\quad
n\to\infty,
$$
and, for each $r\in(0,1)$,
$$
n\log\left(1+a_n^2+\sqrt{(1+a_n^2)^2-1}\right)+ \xi(ra_n)\to -\infty
\qquad\text{as}\quad n\to\infty,
$$ then $\{a_n\}$ and $\{\lambda_n\}$ satisfy~\eqref{p11} and~\eqref{p12} as well. Finally, since
$\log(1+t+\sqrt{(1+t)^2-1})\sim\sqrt{2t}$ as $t\to 0$ it is enough
to choose $a_n\to 0$ and $\lambda_n\to 1-$ such that
\begin{equation}
\label{p13}
n\lambda_na_n\sqrt{1-\lambda_n^2}+ \xi(\lambda_na_n)\to\infty\qquad\text{as}\quad
n\to\infty,
\end{equation}
and, for each $r\in(0,1)$,
\begin{equation}
\label{p14} 2na_n+ \xi(ra_n)\to -\infty \qquad\text{as}\quad
n\to\infty.
\end{equation}
\\[10pt]
\underline{Step 4.} Now we are in a position to choose $\{a_n\}$ and
$\{\lambda_n\}$. For $n$ large enough, let $h_n$ be a unique
solution of the equation
$$
 \xi(x)=-n^{1/2}x,
$$
provided by Lemma~\ref{M}. It follows from Lemma~\ref{seq} that there exists a
sequence $\{\lambda_n\}$ which goes to $1$ slow enough such that
$$\sqrt{1-\lambda_n^2}>n^{-1/3}
$$ and, for each $r\in(0,1)$,
$$
\inf_{t\in(0,2h_n)}\frac{ \xi(rt)}{ \xi(t)}\sqrt{1-\lambda_n^2}\to\infty
\qquad\text{as }\quad n\to\infty.
$$
Moreover, for each $r\in(0,1)$ and $r_1\in (r,1)$, we have
\begin{eqnarray}
\nonumber
 \inf_{t\in(0,h_n/\lambda_n)}\frac{ \xi(rt)}{ \xi(\lambda_nt)}\sqrt{1-\lambda_n^2}
 &=&\inf_{t\in(0,h_n)}\frac{ \xi(rt/\lambda_n)}{ \xi(t)}\sqrt{1-\lambda_n^2}
 \\
 \nonumber
 &\ge&
 \inf_{t\in(0,h_n/\lambda_n)}\frac{ \xi(rt/\lambda_n)}{ \xi(t)}\sqrt{1-\lambda_n^2}
 \\
 \label{p15}
 &\ge&\inf_{t\in(0,h_n/\lambda_n)}\frac{ \xi(r_1t)}{ \xi(t)}\sqrt{1-\lambda_n^2}\to\infty
\quad\text{as }n\to\infty.
\end{eqnarray}
Put $a_n:=z_n/\lambda_n$, where $z_n$ is a unique solution of the
equation
\begin{equation}
\label{p144}
 \xi(z)=-\frac 12nz\sqrt{1-\lambda_n^2},
\end{equation}
provided by Lemma~\ref{M}. Then, Lemma~\ref{M} implies that  $z_n\to
0$, and hence $a_n\to 0$. Therefore,
$$
n\lambda_na_n\sqrt{1-\lambda_n^2}+ \xi(\lambda_na_n)=- \xi(\lambda_na_n)\to
\infty \qquad\text{as }\quad n\to\infty,
$$
i.e., \eqref{p13} holds.

On the other hand, Lemma~\ref{M} together with the condition $\frac
12n\sqrt{1-\lambda_n^2}>n^{1/2}$ for $n$ large enough implies that
$z_n=a_n\lambda_n< h_n$. Thus,~\eqref{p15} yields
$$
\lim_{n\to\infty}\frac{ \xi(r a_n)}{ \xi(\lambda_n
a_n)}\sqrt{1-\lambda_n^2}=\infty.
$$
Moreover, (\ref{p144}) implies
$$
 \xi(\lambda_na_n)=-\frac{1}{2}n\lambda_na_n\sqrt{1-\lambda_n^2}.
$$
Hence,
$$
\lim_{n\to\infty}\frac{ \xi(r a_n)}{{n}\lambda_n a_n/2}=-\infty,
$$
which gives \eqref{p14}.

Thus, the sequence $\{a_n\}$ satisfies ~\eqref{p13} and~\eqref{p14}
and therefore, \eqref{p9} and \eqref{p10}, which concludes the proof
of Theorem~\ref{neg} in the case $p=\infty$.
\\[10pt]
\underline{Step 5.}
 The proof for the case $0<p<\infty$ follows the same lines as the one for the case $p=\infty.$
 % The case $0<p<\infty$ is similar to the case $p=\infty$.
  We again choose the polynomial $Q_n$ as
$$
Q_n(t):=\mathcal{T}_n(1+a_n^2-\sin ^2t),
$$
where $a_n=a_n(\omega, p)\to 0$ is a positive sequence %depending on $\omega$ and $p$
 to be chosen later.  Similarly to  Steps 1 and  2, it is enough to
find a sequence $\{a_n\}$ such that $a_n\to 0$ as $n\to\infty$, for each $r\in(0,1)$,
$$
|\mathcal{T}_n(1+a_n^2)|^p\omega(ra_n)\to 0\quad\text{as}\,\,n\to\infty,
$$
 and
$$
\int_{\T}|\mathcal{T}_n(1+a_n^2-\sin
^2t)|^p\omega(t)dt\to\infty\quad\text{as}\,\,n\to\infty.
$$
The latter holds if for some sequence $\{\lambda_n\}$ such that
$\lambda_n\to 1-$ one has
\begin{align*}
&\int_{(2\lambda_n-1)a_n}^{\lambda_n a_n}
|\mathcal{T}_n(1+a_n^2-\sin^2t)|^p\omega(t)dt
\\
&\ge
\qquad
|\mathcal{T}_n(1+a_n^2-\sin
^2(\lambda_na_n))|^p\,\omega\big((2\lambda_n-1)a_n\big)(1-\lambda_n)a_n\to\infty.
\end{align*}
Similarly to the Step 3 (cf. ~\eqref{p13} and~\eqref{p14}) it is enough to choose sequences
$\{\lambda_n\}$ and $\{a_n\}$ such that
\begin{equation}
\label{p131} pn\lambda_na_n\sqrt{1-\lambda_n^2}+\log{(1-\lambda_n)}
+\log{a_n} + \xi\big((2\lambda_n-1)a_n\big)\to\infty\qquad\text{as}\quad
n\to\infty,
\end{equation}
and for each $r\in(0,1)$
\begin{equation}
\label{p141} 2pna_n+ \xi(ra_n)\to -\infty \qquad\text{as}\quad
n\to\infty.
\end{equation}
Similarly to the Step 4 one can choose sequences $\{\lambda_n\}$ and
$\{a_n\}$ satisfying
\begin{equation}
\label{p132}  \xi\big((2\lambda_n-1)a_n\big)=-pn\lambda_na_n(1-\lambda_n^2)
\end{equation}
and~\eqref{p141}. Finally, note that~\eqref{p132} together with
$\lim_{n\to\infty}\omega(a_n)/a_n=0$ implies~\eqref{p131}.
\end{proof}
The next theorem (cf. Theorem 1.2) is the main negative result in
the paper providing a necessary condition for the weighted
Bernstein inequality to hold.

\begin{theorem}
\label{neg11} Let $\omega\in C(\T)$ be an arbitrary weight
function satisfying (\ref{p6}), (\ref{p7}), and the following condition:
%\begin{equation*}\omega(x_0)=0,\,\,\text{for some}\,\, x_0\in \T,\end{equation*}
%\begin{equation*}
%\omega\,\text{is increasing on } (x_0,x_0+\epsilon),\,\,\text {and}\,\,
%\omega\,\text{is decreasing on } (x_0-\epsilon,x_0)\,\, \text{for some}\,\, \epsilon>0,
%\end{equation*}
\begin{equation}\label{limitlimit}
 \limsup_{t\to t_0}\frac{\log{\omega(t_0+r(t-t_0))}}{\log{\omega(t)}}=\infty
\qquad\text{for each } \, r\in (0,1).
\end{equation}
Then for each $0<p\le\infty$ there exists a sequence of positive
integers $K_n\to\infty$ as $n\to\infty$, and a sequence of
trigonometric polynomials $Q_n$ of degree at most $K_n$ such that
$$
\lim_{n\to\infty}\frac{\|Q_n'\|_{L_p(\omega)}}{K_n\|Q_n
\|_{L_p(\omega)}}=\infty.
$$
%$$ \|Q_n'w\|_{C(\T)}\ge Cn\|Q_nw\|_{C(\T)}. $$
\end{theorem}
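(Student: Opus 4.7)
The plan is to adapt the proof of Theorem~\ref{neg}, in which hypothesis~\eqref{p8} is used only through the uniform lower bound on $\xi(rt)/\xi(t)$ furnished by Lemma~\ref{seq}. Under the weaker assumption~\eqref{limitlimit} this uniform bound is lost, but the ratio is still large along some subsequence of $t\to 0$; the cost of selecting parameters along that subsequence is that the polynomial degree $K_n$ must grow, which is exactly the weakening in the conclusion. Setting $\xi:=\log\omega$ and assuming as before $t_0=0$, $\|\omega\|_{C(\T)}\le 1$, I would use the same test polynomial
$$Q_n(t):=\mathcal{T}_{K_n}\bigl(1+a_n^2-\sin^2 t\bigr),$$
so that Steps~1--3 of Theorem~\ref{neg}, with $n$ replaced by $K_n$, reduce the case $p=\infty$ to producing sequences $a_n\to 0$, $\lambda_n\to 1^-$, $K_n\to\infty$ satisfying
\begin{equation}\label{neg-plan-1}
K_n\lambda_n a_n\sqrt{1-\lambda_n^2}+\xi(\lambda_n a_n)\to\infty,
\end{equation}
\begin{equation}\label{neg-plan-2}
2K_n a_n+\xi(r a_n)\to-\infty\qquad\text{for each fixed }r\in(0,1).
\end{equation}

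The construction I propose is: set $\lambda_n:=1-1/n$ and $r_0^{(n)}:=1-2/n$, and use~\eqref{limitlimit} with $r=r_0^{(n)}$ to pick $b_n\in(0,1/n)$ with $\xi(r_0^{(n)}b_n)/\xi(b_n)>n$; since $\xi$ is negative and increasing on $(0,\epsilon)$, the same bound holds with $r_0^{(n)}$ replaced by any $r'\le r_0^{(n)}$. Put $a_n:=b_n/\lambda_n$ (so $\lambda_n a_n=b_n$) and
$$K_n:=\Bigl\lceil 2|\xi(b_n)|\,/\,\bigl(b_n\sqrt{1-\lambda_n^2}\bigr)\Bigr\rceil,$$
which tends to $\infty$ automatically because $|\xi(b_n)|\to\infty$, $b_n\to 0$, and $\sqrt{1-\lambda_n^2}\to 0$. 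Then~\eqref{neg-plan-1} follows from $K_n\lambda_n a_n\sqrt{1-\lambda_n^2}\ge 2|\xi(b_n)|$ and $|\xi(\lambda_n a_n)|=|\xi(b_n)|$; and for~\eqref{neg-plan-2}, any fixed $r<1$ eventually satisfies $r/\lambda_n\le r_0^{(n)}$, whence $|\xi(r a_n)|\ge|\xi(r_0^{(n)}b_n)|>n|\xi(b_n)|$, while $2K_n a_n\asymp|\xi(b_n)|/\sqrt{1-\lambda_n^2}\asymp\sqrt n\,|\xi(b_n)|$, so $|\xi(r a_n)|-2K_n a_n\ge (n-C\sqrt n)|\xi(b_n)|\to\infty$.

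The case $0<p<\infty$ is handled by running Step~5 of Theorem~\ref{neg} with $n$ replaced by $K_n$; the relevant conditions there involve $\xi((2\lambda_n-1)a_n)$ in place of $\xi(\lambda_n a_n)$, so I would instead pick $c_n$ via~\eqref{limitlimit} with $r=1-3/n$, set $a_n:=c_n/(2\lambda_n-1)$, and enlarge $K_n$ to absorb the additional $|\log a_n|$ and $\log n$ terms appearing in~\eqref{p131}. The main obstacle is precisely the loss of uniformity of $\xi(rt)/\xi(t)$: arguments $b_n$ (or $c_n$) have to be chosen on a specific $\limsup$ subsequence, and to make the factor $\sqrt n\asymp 1/\sqrt{1-\lambda_n^2}$ smaller than the available gain $M_n=n$ coming from~\eqref{limitlimit}, the convergence $\lambda_n\to 1$ has to be slow. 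This coupling is the reason one can no longer insist on degree exactly $n$, and it is what produces the weakened conclusion with $K_n\to\infty$.
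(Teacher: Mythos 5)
Your construction for $p=\infty$ is correct and follows essentially the same route as the paper: the same test polynomial $Q_n(t)=\mathcal{T}_{K_n}(1+a_n^2-\sin^2 t)$, the same reduction to the two displayed limits, and the same idea of selecting $c_n$ (your $b_n$) along the $\limsup$-subsequence and defining $K_n$ by a floor/ceiling of $-\xi(b_n)/(\lambda_na_n\sqrt{1-\lambda_n^2})$; the only differences are cosmetic (you take $r_0^{(n)}=1-2/n$ with gain $>n$ where the paper takes $r=1-1/n$ with gain $>n^2$, either of which beats the cost $\asymp\sqrt n$). For $0<p<\infty$ you defer, as the paper itself does, to "proceed as in Step 5 of Theorem~\ref{neg}," so the two treatments match in scope as well.
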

\begin{remark}
If condition (\ref{limitlimit}) holds for some $r\in (0,1)$, then it also holds for any $r\in (0,1)$.
\end{remark}

\begin{proof}
Without loss of generality we assume  below that $t_0=0$ and
$\|\omega\|_{C(\T)}\le 1$. We will prove the theorem only for the
case $p=\infty$. The case $0<p<\infty$ is similar (see the proof of
Theorem~\ref{neg}, Step 5). Define $Q_n$ as follows:
$$
Q_n(t):=\mathcal{T}_{K_n}(1+a_n^2-\sin ^2t),
$$
where $K_n$ and $a_n\to 0$ to be chosen later. Put $ \xi:=\log \omega.$
Now proceeding step by step the proof of Theorem~\ref{neg} up to
~\eqref{p13} and~\eqref{p14} one can see that it is enough to choose
$a_n\to 0$, an increasing sequence of integers $\{K_n\}$, and
$\lambda_n\to 1-$ such that
\begin{equation}
\label{p113}
K_n\lambda_na_n\sqrt{1-\lambda_n^2}+ \xi(\lambda_na_n)\to\infty\qquad\text{as}\quad
n\to\infty
\end{equation}
and, for each $r\in(0,1)$,
\begin{equation}
\label{p114} 2K_na_n+ \xi(ra_n)\to -\infty \qquad\text{as}\quad
n\to\infty.
\end{equation}
Since
$$
\limsup_{t\to 0}\frac{ \xi(rt)}{ \xi(t)}=\infty \qquad\text{for each }
r\in (0,1),
$$
there exists decreasing positive sequence $c_n$ such that
$c_n\to 0$ as $n\to\infty$, and
\begin{equation}
\label{ww1} \frac{ \xi((1-1/n)c_n)}{ \xi(c_n)}>n^2.
\end{equation}
Put
$$
\lambda_n:=1-1/n,\quad a_n:=c_n/\lambda_n,\quad \text{ and}\quad
K_n:=2\left[\frac{- \xi(c_n)}{\lambda_na_n\sqrt{1-\lambda_n^2}}\right].
$$
Since $\lim_{t\to 0} \xi(t)=-\infty$, then $K_n\to\infty$ as
$n\to\infty$, and hence~\eqref{p113} holds.

To complete the proof, take an
arbitrary $r\in(0,1)$. Since $r<\lambda_n^2$ for $n$ large enough,
by monotonicity of $ \xi$,
$$
2K_na_n+ \xi(ra_n)<2K_na_n+ \xi((1-1/n)c_n).
$$
Thus, by~\eqref{ww1}
$$
2K_na_n+ \xi(ra_n)<2K_na_n+n^2 \xi(c_n)\le\frac{-4 \xi(c_n)}{\lambda_n\sqrt{1-\lambda_n^2}}+n^2 \xi(c_n)\to-\infty
$$
as $n\to\infty$. This proves~\eqref{p114}.
\end{proof}
The next theorem shows an essential difference between
Theorems~\ref{neg} and ~\ref{neg11} in the case when the weight
satisfies (\ref{limitlimit}) but not (\ref{p8}). In this case
Bernstein's inequality may hold for some subsequence of integers
$\{K_n\}$ but not for all $n\in\N$. For simplicity we consider only
the case $p=\infty$ and $t_0=0$.
\begin{theorem}
\label{neg2}

There exists an even weight function $\omega\in C^{\infty}(\T)$
satisfying~\eqref{p6} and~\eqref{p7} and
\begin{equation}
\label{e20} \limsup_{t\to 0}\frac{\log \omega(rt)}{\log
\omega(t)}=\infty, \quad\text{for each } r\in (0,1),
\end{equation}
such that for some increasing sequence of positive integers $K_n$
the Bernstein inequality
$$
\|T_n'\omega\|_{C(\T)}\le CK_n\|T_n\omega\|_{C(\T)}
$$
holds for any trigonometric polynomial $T_n$ of degree at most
$K_n$.
\end{theorem}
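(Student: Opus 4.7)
The plan is to take $\omega=\exp(-F)$ on $\T$, where $F$ is an even $C^\infty$ function on $\T\setminus\{0\}$ with $F(0+)=+\infty$ and $F$ strictly decreasing on $(0,\pi)$. Concretely, I would fix very lacunary sequences $\tau_k:=2^{-4^k}\searrow 0$ and $F_k:=2^{2^k}\nearrow\infty$, set $F(\tau_k)=F_k$, and interpolate between consecutive anchors in a smooth, strictly monotone way (for instance, a linear interpolant smoothed in an arbitrarily small neighborhood of each corner). Then $\omega=\exp(-F)\in C^\infty(\T)$ is an even weight with $\omega(0)=0$ that is monotone on a one-sided neighborhood of $0$, so \eqref{p6} and \eqref{p7} hold.

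\textbf{Verification of \eqref{e20}.} For each fixed $r\in(0,1)$ and all $k$ sufficiently large, $r\tau_k\in(\tau_{k+1},\tau_k)$. For a (smoothed) linear interpolant a direct computation gives $F(r\tau_k)\ge F_k+\tfrac{1-r}{2}F_{k+1}$, so taking $t=\tau_k\to 0$ yields
$$
\frac{\log\omega(rt)}{\log\omega(t)}=\frac{F(r\tau_k)}{F_k}\ge 1+\tfrac{1-r}{2}\cdot\frac{F_{k+1}}{F_k}\longrightarrow\infty,
$$
because $F_{k+1}/F_k=2^{2^k}\to\infty$. Hence the required limsup property holds for every $r\in(0,1)$.

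\textbf{Bernstein on $\{K_n\}$.} Choose $K_n:=\lfloor 2^{2\cdot 4^n}\rfloor$. By the choice of parameters, on each interval $[\tau_{k+1},\tau_k]$ with $k\le n$ (the ``wide'' scales) the Lipschitz constant of $F$ is at most $(F_{k+1}-F_k)/(\tau_k-\tau_{k+1})\le CK_n$, so $\omega$ is locally doubling at scale $1/K_n$ there; on each interval with $k>n$ (the ``narrow'' scales) one has $K_n(\tau_k-\tau_{k+1})\to 0$, so any trigonometric polynomial of degree $\le K_n$ varies by only $1+o(1)$ across the whole interval and the rapid drop of $\omega$ there is invisible to it. The proof then follows the scheme of Section~\ref{section-lp}: approximate $\omega$ by its Fourier partial sum $\omega_{cK_n}$ of degree $cK_n$, apply classical Bernstein to the product $T_{K_n}\,\omega_{cK_n}$, and control the remainder $\int_\T|T_{K_n}|\,|\omega'-\omega'_{cK_n}|$ as in the proof of Theorem~\ref{Th5.1.}, using the weighted Remez inequality (Theorem~\ref{remez-theorem}) to absorb contributions from the narrow-scale regions whose total measure $\sum_{k>n}(\tau_k-\tau_{k+1})$ is much smaller than $1/K_n$.

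\textbf{Main obstacle.} The crux is that $\omega$ is not in $\Omega$: $F$ fails the quasi-homogeneity condition $(F4)$ globally, so the approximation Lemmas~\ref{Lemma4.1}--\ref{Lemma4.3} do not apply as black boxes. The remedy is to re-derive the Fa\`a di Bruno derivative estimate directly for the staircase $F$: on each wide interval one has $|F^{(j)}|\le C^jj!\,F_{k+1}/(\tau_k-\tau_{k+1})^j$ by construction, and combined over all $k\le n$ these yield Fourier coefficients of $\omega$ decaying fast enough to mirror Theorem~\ref{Th4.4} at resolution $1/K_n$. Once this quantitative approximation step is established, the rest of the argument---including the absorption of narrow-scale contributions via the weighted Remez inequality and the bookkeeping across wide, narrow, and unperturbed regions---proceeds as in Section~\ref{section-lp} and produces $\|T_{K_n}'\omega\|_{C(\T)}\le CK_n\|T_{K_n}\omega\|_{C(\T)}$ with $C$ independent of~$n$.
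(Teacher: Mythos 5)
Your construction and verification of~\eqref{e20} are correct, and your choice of $\tau_k,F_k$ is in one respect cleaner than the paper's: with $d_k:=e^{-F_k}$ one has $d_k/\tau_k^{\,j}\to 0$ for each fixed $j$, so $\omega=e^{-F}$ is genuinely $C^\infty$ at the origin, whereas the paper's $\alpha_n=d_n^2$ gives $\omega(\alpha_n)/\alpha_n=1/d_n\to\infty$, i.e.\ $\omega'(0)$ does not exist --- a slip in the published argument. The ``wide versus narrow scales'' dichotomy and the scaling $K_n\approx\tau_n^{-2}$ are also the right intuition. But the step that actually proves $\|T_{K_n}'\omega\|_{C(\T)}\le CK_n\|T_{K_n}\omega\|_{C(\T)}$ has a genuine gap. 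You flag the obstacle ($\omega\notin\Omega$) honestly, but the proposed remedy --- re-deriving Fa\`a di Bruno bounds on the wide intervals to control the Fourier coefficients of $\omega$ --- cannot succeed: $\hat\omega_\nu$ is an integral over all of $\T$, and the narrow intervals carry $j$-th derivative spikes of size $e^{-F_{k+1}}(F_{k+1}/\tau_{k+1})^{j}$, unbounded in $k$, so a bound on $[\tau_{k+1},\tau_k]$ for $k\le n$ tells you nothing about $\hat\omega_\nu$ or about $\|\omega-\omega_{cK_n}\|_{C(\T)}$. That error is in fact of order $e^{-F_{n+1}}$, the very value $\omega$ takes at the edge of the region you want Remez to erase, so the ``absorption'' step as written collapses.

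The paper's proof avoids this entirely: it never touches the Fourier series of $\omega$. Its key move --- and the one missing from your sketch --- is to \emph{truncate}: set $\omega_n:=\omega$ on $\T\setminus[-\alpha_n,\alpha_n]$ and $\omega_n:\equiv d_n$ on $[-\alpha_n,\alpha_n]$, then use the plain unweighted Remez inequality with $K_n\alpha_n\le 1/100$ to get $\|T_{K_n}\omega\|_{C(\T)}\le\|T_{K_n}\omega_n\|_{C(\T)}\le 2\|T_{K_n}\omega\|_{C(\T)}$. The flattened $\omega_n$ has \emph{globally} bounded $\omega_n'$, $\omega_n''$ (of size $d_{n-1}/\alpha_{n-1}$ and $d_{n-1}/\alpha_{n-1}^{2}$), so ordinary Jackson's theorem --- not Fourier partial sums of $\omega$, not Fa\`a di Bruno, and not the $\Omega$-class machinery of Sections~\ref{section-approximation}--\ref{section-lp} --- produces a degree-$K_n$ polynomial $Q_n$ with $\|\omega_n-Q_n\|\le d_n/2$ and $\|\omega_n'-Q_n'\|\le K_nd_n$; the Bernstein bound then falls out of the product rule plus one more application of Remez. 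In your notation, replace $\omega$ by $\tilde\omega_n:=\max\big(\omega,\,e^{-F_{n+1}}\big)$ \emph{before} approximating; once that flattening is in place, your construction and scale bookkeeping carry through.
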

\begin{proof}
Let
$$
W(x)=\frac{\int^{\pi x}_0\exp\big(-1/\sin^2t\big)dt}{\int^{\pi
}_0\exp\big(-1/\sin^2t\big)dt}, \quad x\in [0,1].
$$

Define an even weight $\omega$ as follows
\begin{equation*}
\omega(t):=\begin{cases}
1, &\text{if \qquad $t\in[\alpha_1,\pi]$,}\\&\\
d_n,&\text{if \qquad $t\in[\alpha_n,\frac{\alpha_{n-1}}2]$, $n\ge
2$,}\\&\\
d_{n+1}+(d_n-d_{n+1})W\left(\frac{2t}{\alpha_n}-1\right), &\text{if
\qquad $t\in[\frac{\alpha_{n}}2,\alpha_n]$, $n\ge
1$,}\\&\\
0, &\text{if \qquad $t=0$,}
\end{cases}
\end{equation*}
where $d_n:=\exp\big(-\exp (n^2)\big)$ and $\alpha_n:=d_n^2$. By construction,
$\omega\in C^{\infty}(\T)$. Since
$$
\lim_{n\to\infty}\frac{\log \omega(\alpha_n/2)}{\log
\omega(\alpha_n)}=\infty,
$$
then $\omega$ satisfies~\eqref{e20}.

 For each $n\in \N$, we also define
an even weight $\omega_n$:
\begin{equation*}
\omega_n(t):=\begin{cases}
\omega(t), &\text{if $ t\in[\alpha_n,\pi]$,}\\&\\
d_n,& \text{if $t\in[0,\alpha_n]$}.
\end{cases}
\end{equation*}
Put
\begin{equation}
\label{e1} K_n:=\left[\frac{1}{100\alpha_n}\right].
\end{equation}
Take a polynomial $T_n$ of degree at most $K_n$. Since
$\omega_n(t)\ge \omega(t)$, $t\in\T$, then
$\|T_n\omega\|_{C(\T)}\le\|T_n\omega_n\|_{C(\T)}$.

On the other hand,
\begin{equation}
\label{e2} \|T_n\omega_n\|_{C(\T)}\le 2\|T_n\omega\|_{C(\T)}.
\end{equation}
Indeed, let $t_0\in\T$ be a point where $|T_n\omega_n|$ attains its maximum.
If $|t_0|\ge\alpha_n$, then~\eqref{e2} is obvious. If
$|t_0|<\alpha_n$, then using Remez's inequality and~\eqref{e1} we get
\begin{eqnarray}\nonumber
\|T_n\omega_n\|_{C(\T)}&=&d_n\|T_n\|_{C(\T)}\le
d_n \exp({8\alpha_nK_n})\max_{t\in\T\setminus[-\alpha_n,\alpha_n]}|T_n(t)|
\\
\label{e0}
&<&2\max_{t\in\T\setminus[-\alpha_n,\alpha_n]}|T_n(t)\omega(t)|\le 2
\|T_n\omega\|_{C(\T)}.
\end{eqnarray}

Note that by definition of $w_n$ we have
\begin{equation}
\label{e4} |\omega'_n(t)|\le C\max_{1\le k\le
n-1}\frac{d_k}{\alpha_k}\le C\frac{d_{n-1}}{\alpha_{n-1}}, \quad
t\in \T.
\end{equation}
Hence,
\begin{equation}
\label{e3} |\omega'_n(t)|\le C\max_{1\le k\le
n-1}\frac{d_k}{d_{k+1}\alpha_k}|\omega_n(t)|\le
C\frac{d_{n-1}}{d_n\alpha_{n-1}}|\omega_n(t)|, \quad t\in \T.
\end{equation}
Moreover, we have
\begin{equation}
\label{e5} |\omega''_n(t)|\le C\max_{1\le k\le
n-1}\frac{d_k}{\alpha^2_k}\le C\frac{d_{n-1}}{\alpha^2_{n-1}}, \quad
t\in \T.
\end{equation}

Since $\omega_n\in C^{\infty}(\T)$ then by Jackson's theorem there
exists a trigonometric polynomial $Q_n$ of degree $K_n$ such that
$$
\|\omega_n-Q_n\|\le C\frac{\|\omega'_n\|_{C(\T)}}{K_n}
$$
and
$$
\|\omega'_n-Q'_n\|\le C\frac{\|\omega''_n\|_{C(\T)}}{K_n}.
$$
Thus, \eqref{e4} and~\eqref{e5} yield that
\begin{equation}
\label{e6} \|\omega_n-Q_n\|\le
C\frac{d_{n-1}\alpha_n}{\alpha_{n-1}}\le \frac{d_n}2
\end{equation}
and
\begin{equation}
\label{e7} \|\omega'_n-Q'_n\|\le
C\frac{d_{n-1}\alpha_n}{\alpha^2_{n-1}}\le K_nd_n
\end{equation}
for $n$ large enough. Now by~\eqref{e6} we get
\begin{eqnarray*}
\|T'_n\omega\|_{C(\T)}&\le& \|T'_n\omega_n\|_{C(\T)}\le
\|T'_nQ_n\|_{C(\T)}+\|T'_n\|_{C(\T)}\|\omega_n-Q_n\|_{C(\T)}
\\
&\le&\|T'_nQ_n\|_{C(\T)}+\frac{d_n}{2}\|T'_n\|_{C(\T)}\le\|T'_nQ_n\|_{C(\T)}+
\frac 12\|T'_n\omega_n\|_{C(\T)}.
\end{eqnarray*}
Therefore,
$$
\|T'_n\omega\|_{C(\T)}\le\|T'_n\omega_n\|_{C(\T)}\le
2\|T'_nQ_n\|_{C(\T)}.
$$
Similarly applying the inequality
$$
\|T'_n\omega_n\|_{C(\T)}\ge\|T'_nQ_n\|_{C(\T)} - \|T'_n\|_{C(\T)} \|\omega_n-Q_n\|_{C(\T)},$$
we get
\begin{equation}
\label{e66} \|T_nQ_n\|_{C(\T)}\le 2\|T_n\omega_n\|_{C(\T)}.
\end{equation}

Thus,
$$
\|T'_n\omega\|_{C(\T)}\le 2\|T'_nQ_n\|_{C(\T)}\le
2\|(T_nQ_n)'\|_{C(\T)}+2\|T_nQ'_n\|_{C(\T)}=:I_1+I_2.
$$
By Bernstein's inequality for the polynomials and~\eqref{e6} we have
$$
I_1\le CK_n\|T_nQ_n\|_{C(\T)}\le 4CK_n\|T_n\omega\|_{C(\T)}.
$$
Regarding  $I_2$, we first note that
$$
I_2\le 2\|T_n\omega_n'\|_{C(\T)}+
2\|T_n\|_{C(\T)}\|\omega'_n-Q'_n\|_{C(\T)}=:I_{21}+I_{22}.
$$
By~\eqref{e3} and~\eqref{e0} we get
$$
I_{21}\le
C\frac{d_{n-1}}{d_n\alpha_{n-1}}\|T_n\omega_n\|_{C(\T)}<CK_n\|T_n\omega\|_{C(\T)}.
$$
Moreover, \eqref{e7} and \eqref{e0} imply
$$
I_{22}\le 2K_nd_n \|T_n\|_{C(\T)}\le 2 K_n\|T_n\omega_n\|_{C(\T)}\le
4K_n\|T_n\omega\|_{C(\T)}
$$ for $n$ large enough.
Hence, for any $n\in \N$,
$$
\|T_n'\omega\|_{C(\T)}\le CK_n\|T_n\omega\|_{C(\T)}.
$$
\end{proof}

{\bf Acknowledgements.}
The authors  are grateful to D. Lubinsky and V. Totik for valuable discussions.
 The authors thank the Centre de Recerca Matem\`{a}tica, the Centre for Advanced Study at the Norwegian Academy of Science and Letters in Oslo, and Mathematisches Forschungsinstitut Oberwolfach for their hospitality during the
preparation of this manuscript and for providing a stimulating
atmosphere for research.

\bigskip

\end{document}